\DeclareMathOperator{\To}{\longrightarrow}
\DeclareMathOperator{\Ext}{Ext}
\DeclareMathOperator{\Hom}{Hom}
\DeclareMathOperator{\id}{id}
\DeclareMathOperator{\M}{\mathcal{M}}
\DeclareMathOperator{\rhu}{\rightharpoonup}
\title[Gorenstein homological invariants and monoidal model categories]{Gorenstein homological invariants and monoidal model categories of Hopf algebras}
\author{Wei Ren and Ruipeng Zhu}
\numberwithin{equation}{section}
\theoremstyle{definition}
\newtheorem{thm}{Theorem}[section]
\newtheorem{prop}[thm]{Proposition}
\newtheorem{lem}[thm]{Lemma}
\newtheorem{cor}[thm]{Corollary}
\newtheorem{rk}[thm]{Remark}
\newtheorem{ex}[thm]{Example}
\begin{document}

\begin{abstract}
Let $H$ be a Hopf algebra over a field $k$ with a bijective antipode. It is proved that the Gorenstein global dimension of $H$ coincides with the Gorenstein projective dimension of the trivial left (or right) $H$-module $k$. Then, $H$ is finite dimensional if and only if the Gorenstein projective dimension of $k$ is trivial. Although monoidal Morita-Takeuchi equivalence of Hopf algebras does not preserve the global dimension, we demonstrate that it does preserve the Gorenstein global dimension and the Artin-Schelter Gorenstein property; this supports Brown-Goodearl's question of whether every noetherian (affine) Hopf algebra is AS Gorenstein.
Finally, for $H$ and an $H$-Galois object $B$, 
we show the categories of modules $_H\mathcal{M}$ and $_B\mathcal{M}_B^H$ are monoidal model categories regarding Gorenstein projective model structure, provided that the Gorenstein global dimension of $H$ is finite. The corresponding stable categories are tensor triangulated categories.
\end{abstract}
	
\makeatletter
\@namedef{subjclassname@2020}{\textup{2020} Mathematics Subject Classification}
\makeatother

	\subjclass[2020]{16E10, 16T05, 16E65, 18N40}
	\date{}
	\keywords{Gorenstein projective dimension, monoidal Morita-Takeuchi equivalence, monoidal model category}
	\thanks{Corresponding author (R. Zhu). E-mail address: zhuruipeng$\symbol{64}$sufe.edu.cn.}
	\maketitle
	\tableofcontents

	\addtocounter{section}{-1}
\section{Introduction}
\noindent
\noindent
A module is said to be Gorenstein projective if it is a syzygy of a totally acyclic complex of projective modules \cite{EJ}. In the literature, finitely generated Gorenstein projective modules are also known as modules of G-dimension zero, maximal Cohen-Macaulay modules or totally reflexive modules; see e.g. \cite{AB, Buc, EJ}. The Gorenstein projective dimension of a module $M$, denoted by ${\rm Gpd}(M)$, is defined as the minimal length of a resolution of $M$ by Gorenstein projective modules. This notion has its origins in the work of Auslander and Bridger \cite{AB} on $G$-dimension, which generalizes projective dimension in the sense that ${\rm Gpd}(M)\leq {\rm pd}(M)$, with equality whenever ${\rm pd}(M)$ is finite. For a ring $R$, the Gorenstein global dimension ${\rm Ggldim}(R)$ is defined as the supremum of the Gorenstein projective dimension of all (left) $R$-modules. The terminology is justified in \cite{BM}, where it is shown that ${\rm Ggldim}(R)$ also equals the supremum of the Gorenstein injective dimension of all $R$-modules. Over the past few decades, the Gorenstein homological algebra has experienced a rapid development and found interesting applications across diverse areas, including the representation theory of Artin algebras, singularity categories, cohomology theory of commutative rings and cohomological group theory.

Let $H$ be a Hopf algebra over a field $k$. A profound phenomenon in the homological study of Hopf algebras is that certain homological invariants of $H$ are completely determined by the trivial $H$-module $k$ (defined via the counit $\varepsilon$). Our starting point is an observation by Lorenz and Lorenz \cite[2.4]{LL} on the left (resp. right) global dimension of $H$:
\[{\rm l.gldim}(H)= {\rm pd}_H({_\varepsilon}k),\text{ and }  {\rm r.gldim}(H)= {\rm pd}_H(k_\varepsilon).\]
Although the global dimension of any algebra is not necessarily symmetric, for Hopf algebra $H$, there are equalities
${\rm l.gldim}(H)= {\rm pd}_{H^e}(H) ={\rm r.gldim}(H)$, where ${\rm pd}_{H^e}(H)$ denotes the projective dimension of $H$ as an $H$-bimodule, i.e. as a left module over the enveloping algebra $H^e= H\otimes H^{op}$; see e.g. \cite{Bi} or \cite[Appendix]{WYZ}. 

Throughout, we assume that the antipode of the Hopf algebras is bijective.  Note that this assumption is not too restrictive; cf. e.g. \cite{BZ, Sk1}. Motivated by work of Emmanouil and Talleli \cite{ET1, ET3} on Gorenstein cohomolgical dimension of groups, we show in Theorem \ref{thm:fGpd} that the trivial module is a test module for the Gorenstein global dimension of Hopf algebra $H$. Moreover, we
obtain the Gorenstein counterpart of the above equalities in Corollary \ref{cor:equ-2}.

\begin{thm}\label{thm-1}
(i) Let $H$ be a Hopf algebra over a field $k$. The following
	are equivalent:
	\begin{enumerate}
	\item ${\rm Gpd}_H({_\varepsilon}k)<\infty$.
	\item There is an $H$-monomorphism $\iota: {_\varepsilon}k\rightarrow \Lambda$
	with ${\rm pd}_H(\Lambda)<\infty$.
 \item For any $H$-module $M$, ${\rm Gpd}_H(M)<\infty$.
 \end{enumerate}
\noindent Then, ${\rm l.Ggldim}(H)={\rm Gpd}_H({_\varepsilon}k)$, and moreover ${\rm Gpd}_H({_\varepsilon}k)={\rm pd}_H(\Lambda)$ if either of them is finite.

(ii)  ${\rm l.Ggldim}(H)= {\rm Gpd}_H({_\varepsilon}k) ={\rm Gpd}_{H^e}(H)= {\rm r.Ggldim}(H)= {\rm Gpd}_H(k_\varepsilon)$.
\end{thm}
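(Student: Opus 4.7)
The plan is to mirror Emmanouil--Talleli's approach for groups in \cite{ET1, ET3}, with the key Hopf-algebraic input being that $V \otimes_k H$ with diagonal action is $H$-free for every $H$-module $V$, via the antipode twist $v \otimes h \mapsto \sum S(h_{(1)})v \otimes h_{(2)}$. Consequently $V \otimes_k P$ is projective whenever $P$ is, so tensoring with any $V$ over $k$ preserves finite projective dimension. The implication $(3) \Rightarrow (1)$ is immediate.

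For $(1) \Rightarrow (2)$, when ${\rm Gpd}_H({_\varepsilon}k) = n$, Holm's theorem provides a short exact sequence $0 \to K \to G \to {_\varepsilon}k \to 0$ with $G$ Gorenstein projective and ${\rm pd}_H(K) \leq n-1$. One embeds $G$ in a projective $P$ with Gorenstein projective cokernel $G'$, and forms the pushout of $G \twoheadrightarrow {_\varepsilon}k$ along $G \hookrightarrow P$; this yields $0 \to K \to P \to N \to 0$ (forcing ${\rm pd}_H(N) \leq n$) together with $0 \to {_\varepsilon}k \to N \to G' \to 0$, giving the required monomorphism $\iota$. For $(2) \Rightarrow (3)$, tensoring the embedding $0 \to {_\varepsilon}k \to \Lambda \to C \to 0$ over $k$ with any $H$-module $M$ produces a monomorphism $M \hookrightarrow M \otimes_k \Lambda$ with ${\rm pd}_H(M \otimes_k \Lambda) \leq d := {\rm pd}_H(\Lambda)$. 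To upgrade this to ${\rm Gpd}_H(M) \leq d$, I combine Holm's Ext-characterization (${\rm Gpd}_H(M) \leq d$ iff ${\rm Ext}^{>d}_H(M, L) = 0$ for all $L$ of finite projective dimension) with an iterated dimension-shift using the exact sequences $0 \to C^{\otimes i} \to \Lambda \otimes_k C^{\otimes i} \to C^{\otimes(i+1)} \to 0$, whose middle terms all have pd $\leq d$. These bounds yield both (3) and the equality ${\rm l.Ggldim}(H) = {\rm Gpd}_H({_\varepsilon}k)$ at once.

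For part (ii), the equality ${\rm Gpd}_{H^e}(H) = {\rm Gpd}_H({_\varepsilon}k)$ adapts the Lorenz--Lorenz proof of the corresponding projective identity. Via the algebra map $H \to H^e$, $h \mapsto h \otimes 1$, the bimodule $H^e$ is free as a left $H$-module and $H^e \otimes_H {_\varepsilon}k \cong H$ as $H^e$-modules. Induction along this map transports a Gorenstein projective resolution of ${_\varepsilon}k$ over $H$ to one of $H$ over $H^e$ --- the crucial point is that induction preserves both projectivity and totally acyclic complexes of projectives, using the $H$-freeness of $H^e$ --- yielding ${\rm Gpd}_{H^e}(H) \leq {\rm Gpd}_H({_\varepsilon}k)$. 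The reverse inequality uses the complementary map $h \mapsto 1 \otimes h$ and tensors an $H^e$-resolution of $H$ with ${_\varepsilon}k$ over $H$. The right-module versions are then obtained by applying part (i) to the opposite Hopf algebra $H^{\rm op}$, whose antipode is bijective iff that of $H$ is.

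The main obstacle is the verification in $(2) \Rightarrow (3)$ that ${\rm Gpd}_H(M)$ is bounded \emph{uniformly} by $d$, not merely that it is finite. The tensor functor $- \otimes_k M$ does not manifestly preserve Gorenstein projectivity when $M$ is infinite-dimensional over $k$, because the natural adjunction ${\rm Hom}_H(P \otimes_k M, Q) \cong {\rm Hom}_H(P, {\rm Hom}_k(M, Q))$ involves ${\rm Hom}_k(M, Q)$, which need not be projective for projective $Q$. The bijective antipode hypothesis together with the iterative use of $0 \to {_\varepsilon}k \to \Lambda \to C \to 0$ tensored against itself provides the technical lever to control the relevant Ext groups.
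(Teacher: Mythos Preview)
Your pushout argument for $(1)\Rightarrow(2)$ matches the paper's Lemma~\ref{lem:monic}. The real gap is in $(2)\Rightarrow(3)$. Holm's result \cite[Theorem~2.20]{Hol} says that \emph{if} $\Gpd_H(M)<\infty$, then $\Gpd_H(M)\le d$ is equivalent to $\Ext^{>d}_H(M,L)=0$ for all $L$ of finite projective dimension; the finiteness hypothesis is essential and you have not secured it. Moreover, your iterated shift only produces isomorphisms $\Ext^i_H(M,L)\cong\Ext^{i+j}_H(M\otimes C^{\otimes j},L)$ for $i>d$, linking two groups neither of which is known to vanish --- pushing the degree to infinity does not force zero. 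The paper (Proposition~\ref{prop:inequ}) bypasses the Ext criterion entirely: it passes to the $d$-th syzygy $N$ of $M$, where the crucial gain is that $N\otimes\Lambda$ is genuinely \emph{projective} (not merely of finite pd). Splicing the sequences $0\to N\otimes\Gamma^{\otimes i}\to N\otimes\Lambda\otimes\Gamma^{\otimes i}\to N\otimes\Gamma^{\otimes(i+1)}\to 0$ with a projective resolution of $N$ yields an acyclic complex $\mathbf{P}$ of projectives; total acyclicity is then checked by observing that every cycle of $\mathbf{P}$ becomes projective after $-\otimes\Lambda$, so $\mathbf{P}\otimes\Lambda$ is contractible, and the adjunction $\Hom_H(\mathbf{P}\otimes\Lambda,Q)\cong\Hom_H(\mathbf{P},\Hom_k(\Lambda,Q))$ together with the splitting of $Q$ off $\Hom_k(\Lambda,Q)$ (for projective $Q$) finishes the job. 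Your final paragraph correctly identifies this as the obstacle, but the ``technical lever'' you name is precisely this construction, not an Ext computation.

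For part~(ii), along $h\mapsto h\otimes 1$ one has $H^e\otimes_H{}_\varepsilon k\cong H^{\op}$ with left $H$-action via $\varepsilon$ and right action regular --- this is not the regular bimodule $H$, so induction along this map does not transport resolutions of ${}_\varepsilon k$ to resolutions of $H$. The correct map is $\varphi:h\mapsto\sum h_1\otimes Sh_2$. The paper (Proposition~\ref{prop:equ-1}) takes restriction along $\varphi$ with left adjoint $F=(-)\otimes H$, verifies $F({}_\varepsilon k)\cong H$ as $H^e$-modules, and then appeals to a clean transfer lemma (Lemma~\ref{adj-lem}) for faithful exact adjoint pairs satisfying $\mathsf{add}\,G(\mathsf{Proj})=\mathsf{Proj}$, yielding $\Gpd_H(M)=\Gpd_{H^e}(M\otimes H)$ for every $M$ at once.
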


In \cite{LZ}, Liu and Zhang gave an affirmative answer to a question posed by Bergen, showing that an artinian Hopf algebra over a field is finite dimensional. As an application of the above theorem, we extend this characterization of finite dimensional Hopf algebras (Proposition \ref{prop:f.d.H}) by proving that the following conditions are equivalent:\\
\indent (1) $H$ is finite dimensional; (2) $H$ is left (or right) artinian; (3) ${\rm Gpd}_H(k)= 0$;\\ \indent (4) As a left (or right) $H$-module, $k$ is a submodule of a projective module.
\vspace{0.1in}


A prominent problem in the study of Hopf algebras is whether important homological invariants are preserved under various forms of ``deformations''. Two Hopf algebras $H$ and $L$ are said to be monoidally Morita-Takeuchi equivalent, if their comodule categories are monoidally equivalent; this condition is known to be equivalent to the existence of an $L$-$H$-biGalois extension of the base field $k$; cf. \cite{S1}. 
As shown in recent literature, several important homological invariants, such as the skew Calabi-Yau property and the global dimension of homologically smooth Hopf algebras, have been examined under monoidal Morita-Takeuchi equivalence; cf. \cite{Bi, WYZ, Zh}.
In contrast,  by \cite[Example 4.11]{Zh} monoidally Morita-Takeuchi equivalences do not preserve the global dimension without the smoothness hypothesis.

A natural extension of this line of inquiry is to ask whether the Gorenstein global dimension is preserved under monoidal Morita-Takeuchi equivalence. Let $H$ be a Hopf algebra, and let $B$ be a right $H$-comodule algebra. We consider the Ehresmann-Schauenburg bialgebroid $(B^e)^{co H}$ \cite{BW}. In view of the question we concerned, in the main result of Section 3 (cf. Theorem \ref{mMTe-thm}), an affirmative answer is provided under suitable hypothesis. 
 \begin{thm}\label{thm-2}
Let $H$ and $L$ be two monoidally Morita-Takeuchi equivalent Hopf algebras, and let $B$ be an $L$-$H$-biGalois extension of $k$. Then
\[ {\rm Ggldim}(H) = {\rm Gpd}_H(k) = {\rm Gpd}_{B^e}(B) = {\rm Gpd}_L(k) = {\rm Ggldim}(L). \]
\end{thm}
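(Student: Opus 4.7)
The plan is to split the stated chain of five equalities into pieces and treat them in turn. By Theorem~\ref{thm-1}(ii) applied to each of $H$ and $L$, the outer equalities ${\rm Ggldim}(H)={\rm Gpd}_H(k)$ and ${\rm Gpd}_L(k)={\rm Ggldim}(L)$ are immediate. Since $B$ is an $L$-$H$-biGalois extension of $k$, it is simultaneously right $H$-Galois and left $L$-Galois, so by symmetry only the single central equality ${\rm Gpd}_H(k)={\rm Gpd}_{B^e}(B)$ needs proof; the corresponding statement with $L$ then follows by applying the same argument to the opposite-handed Galois structure.

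The central tool would be the classical Schauenburg--Ulbrich equivalence ${}_B\M_B^H\simeq{}_H\M$ for faithfully flat $H$-Galois extensions, implemented by the mutually inverse functors $(-)^{coH}$ and $B\otimes(-)$, under which $B\leftrightarrow k$. Two preservation properties are decisive: (a) the canonical Galois map $B\otimes B\xrightarrow{\sim}B\otimes H$ of $B$-bimodules exhibits $B\otimes H$ as $B^e$-free of rank one, so $B\otimes(-)$ sends $H$-projectives to honest $B^e$-projectives, and (b) both functors are exact, the backward one by faithful flatness of $B$ over $k$. Combining these, any finite projective (and more generally Gorenstein projective) resolution of $k$ in ${}_H\M$ transports to one of $B$ in ${}_{B^e}\M$ of the same length, giving ${\rm Gpd}_{B^e}(B)\leq{\rm Gpd}_H(k)$. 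For the reverse inequality I would apply the right adjoint induction functor $(-)\otimes H:{}_{B^e}\M\to{}_B\M_B^H$, which is exact and preserves projectives, to lift a test monomorphism $B\hookrightarrow\Lambda$ as in Theorem~\ref{thm-1}(i) into the relative category; then $(-)^{coH}$ produces an $H$-test monomorphism $k\hookrightarrow(\Lambda\otimes H)^{coH}$ of the same finite $H$-projective dimension, giving ${\rm Gpd}_H(k)\leq{\rm Gpd}_{B^e}(B)$.

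The main obstacle I foresee is reconciling ``projective as a $B^e$-module'' with ``projective in the relative category ${}_B\M_B^H$'', since Theorem~\ref{thm-1}(i) does not directly apply to $B^e$ (which is not a Hopf algebra). I would navigate this by systematically passing through the adjoint pair (forget/induction) between the relative and absolute categories: $B^e$-projective resolutions of $B$ lift through induction to ${}_B\M_B^H$ without changing length, and the Schauenburg--Ulbrich equivalence matches them with $H$-projective resolutions of $k$ via the Galois isomorphism. Once this bookkeeping between the two notions of projectivity is in place, the chain of equalities follows from Theorem~\ref{thm-1}(i) and the preservation properties above.
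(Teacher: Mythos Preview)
Your reduction to the outer equalities via Theorem~\ref{thm-1} and a single ``central'' equality plus symmetry is exactly the skeleton of the paper's proof. However, two points need attention.

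First, the equivalence you invoke is mislabeled. Schauenburg's structure theorem for a right $H$-Galois object $B$ gives ${_B\!\M_B^H}\simeq{_L\!\M}$ with $L=(B^e)^{coH}$, \emph{not} ${_B\!\M_B^H}\simeq{_H\!\M}$; the functors $(-)^{coH}$ and $(-)\otimes B$ land in and come from $_L\!\M$. So the right $H$-Galois structure actually yields $\Gpd_L(k)=\Gpd_{B^e}(B)$, and one obtains the $H$-equality from the \emph{left} $L$-Galois structure (equivalently, by viewing $B$ as a right $L^{cop}$-Galois object and noting $(B^e)^{co\,L^{cop}}\cong H$). This is precisely how the paper argues; your symmetry remark is correct but with the sides swapped.

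Second, and more substantively, the step ``any finite Gorenstein projective resolution of $k$ transports to one of $B$ in ${_{B^e}\!\M}$'' is asserted without justification. Knowing that the functor $F=(-)\otimes B$ (followed by forgetting) sends projectives to $B^e$-projectives does not by itself imply it sends Gorenstein projectives to Gorenstein projectives: for a totally acyclic complex $\mathbf{P}$ of $L$-projectives you must still check that $\Hom_{B^e}(F(\mathbf{P}),Q)$ is acyclic for every $B^e$-projective $Q$. This needs the adjunction $\Hom_{B^e}(F(-),Q)\cong\Hom_L(-,G(Q))$ together with the fact that $G(Q)$ is $L$-projective, i.e.\ that $B^e$ is $L$-projective via restriction and that $L$ is a summand of $B^e$. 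The paper packages exactly this into a general preservation lemma (Lemma~\ref{adj-lem}, from \cite{CR}): for an adjoint pair $(F,G)$ of exact functors with $F$ faithful and $\add\,G(\mathsf{Proj})=\mathsf{Proj}$, one has $\Gpd(X)=\Gpd(F(X))$ on the nose. This single lemma replaces both of your inequalities at once and sidesteps the bookkeeping you anticipate between relative and absolute projectivity. Your test-monomorphism argument for the reverse inequality is fine in outline (once relabeled for $L$), but the forward direction needs either this lemma or its proof spelled out.
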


Gorenstein global dimension of any ring $R$ is closely related to two invariant introduced by Gedrich and Gruenberg \cite{GG}: the supremum of the injective lengths of projective (left) $R$-modules ${\rm l.silp}(R)$ and the supremum of the projective lengths of injective (left) R-modules ${\rm l.spli}(R)$. Gedrich and Gruenberg noted if both invariants are finite, then they are equal; however, the relation between ${\rm l.silp}(R)$ and ${\rm l.spli}(R)$ is unclear for a general ring $R$.
We show that for any noetherian Hopf algebra $H$, ${\rm l.Ggldim}(H) = {\rm l.silp}(H) = {\rm l.spli}(H) = {\rm id}_H(H)$; cf. Corollary \ref{silp=spli-noe-Hopf-cor}. 

A Hopf algebra $H$ with a bijective antipode is called {\it Artin-Schelter Gorenstein} (or {\it AS Gorenstein} for short) if $_HH$ has finite injective dimension, say $d$, and $\Ext^i_H({}_{\varepsilon}k, H) = 0$ for all $i \neq d$ and $\Ext^d_H({}_{\varepsilon}k, H) = k$.
It is well-known that any noetherian AS Gorenstein Hopf algebra is a Gorenstein algebra (i.e. an algebra with finite Gorenstein global dimension). 
There are Gorenstein Hopf algebras which are not necessarily AS Gorenstein, cf. Example \ref{not-AS-G-eg}. The following question, often attributed to Brown and Goodearl, remains open and continues to attract considerable attention, cf. \cite[1.15]{BG}, \cite[Question A]{Br} and \cite[1.2]{BZ}.\vspace{0.1in} \\
\noindent {\bf Question}: Is every noetherian (affine) Hopf algebra AS Gorenstein?\vspace{0.1in}\\
\noindent  As an application, we have the following, where (1) affirmatively answer a question raised in \cite{Zh} under the noetherian hypothesis, and (2) supports Brown-Goodearl's question.

\begin{prop}\label{prop-3}
Let $H$ and $L$ be two monoidally Morita-Takeuchi equivalent noetherian Hopf algebras. Then the following hold.
    \begin{enumerate}
    \item ${\rm id}_H(H) = {\rm id}_L(L)$.
    \item $H$ is AS Gorenstein if and only if $L$ is AS Gorenstein.
    \end{enumerate}
\end{prop}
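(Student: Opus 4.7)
The plan is to combine the results already in place with a transfer along the biGalois object. Part (1) is essentially formal: Corollary \ref{silp=spli-noe-Hopf-cor} identifies, for any noetherian Hopf algebra, ${\rm id}_H(H)$ with ${\rm l.Ggldim}(H)$, and Theorem \ref{thm-2} gives ${\rm l.Ggldim}(H) = {\rm l.Ggldim}(L)$. Stringing these together yields ${\rm id}_H(H) = {\rm id}_L(L)$, proving (1).

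For part (2), by (1) the value $d := {\rm id}_H(H) = {\rm id}_L(L)$ is common and finite under the AS Gorenstein hypothesis on either side, so by symmetry in $H$ and $L$ it suffices to transfer the Ext condition
\[
\Ext^i_H({}_\varepsilon k, H) = \begin{cases} k, & i = d, \\ 0, & i \neq d, \end{cases}
\]
from $H$ to $L$. The natural tool is the $L$-$H$-biGalois object $B$ and the Ehresmann-Schauenburg bialgebroid $(B^e)^{{\rm co}H}$ already employed in Theorem \ref{thm-2}. I would compute $\Ext^*_L({}_\varepsilon k, L)$ via cohomological data attached to $B^e$ (by change-of-rings or faithfully flat descent along $k \subset B$), and match it against $\Ext^*_H({}_\varepsilon k, H)$ through the same bridge. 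The monoidal equivalence $\M^H \simeq \M^L$ sends the trivial comodule ${}_\varepsilon k$ to ${}_\varepsilon k$, both being monoidal units, so this descent is well adapted to transferring the Ext condition on trivial modules.

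The main obstacle is that the AS Gorenstein condition involves not just ${}_\varepsilon k$ but also the regular representation $H$, which has no direct counterpart in $\M^L$. Thus matching $\Ext^*_H({}_\varepsilon k, H)$ with $\Ext^*_L({}_\varepsilon k, L)$ cannot be achieved solely at the level of the monoidal equivalence of comodule categories; the identification must factor through $B^e$, much as in the chain ${\rm Gpd}_H(k) = {\rm Gpd}_{B^e}(B) = {\rm Gpd}_L(k)$ from Theorem \ref{thm-2}. Once such an identification of the relevant Ext groups is established, the two conditions in the AS Gorenstein definition, namely the vanishing of $\Ext^i$ for $i \neq d$ and the identification of $\Ext^d$ with the trivial module $k$, pass symmetrically from $H$ to $L$, completing (2).
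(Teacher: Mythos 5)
Your part (1) is correct and is exactly the paper's argument: Corollary \ref{silp=spli-noe-Hopf-cor} gives ${\rm id}_H(H)={\rm l.Ggldim}(H)$ and ${\rm id}_L(L)={\rm l.Ggldim}(L)$ for noetherian Hopf algebras, and Theorem \ref{mMTe-thm} equates the two Gorenstein global dimensions.

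For part (2), however, there is a genuine gap: you correctly diagnose that the comparison must factor through $B^e$ because the regular representation has no counterpart under the comodule-category equivalence, but the argument stops precisely where the real work begins. The sentence ``I would compute $\Ext^*_L({}_\varepsilon k,L)$ via cohomological data attached to $B^e$ \dots and match it against $\Ext^*_H({}_\varepsilon k,H)$'' is a statement of intent, and your own phrase ``once such an identification of the relevant Ext groups is established'' concedes that the identification is never produced. The paper supplies exactly this missing step by invoking \cite[Theorem 4.9(2) and (4)]{Zh}: for a faithfully flat right $H$-Galois object $B$, the condition that $\Ext^i_{B^e}(B,B^e)$ vanishes for $i\neq d$ and equals $B_\mu$ in degree $d$ (for some algebra automorphism $\mu$ of $B$) is equivalent to $\Ext^i_{H^{op}}(k,H)$ being concentrated in degree $d$ and one-dimensional. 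Applying this once to the right $H$-Galois structure and once to the right $L^{cop}$-Galois structure of $B$ (using $(B^e)^{co L^{cop}}\cong H$) yields the transfer. This equivalence is a nontrivial input resting on Stefan-type spectral sequences for Hopf--Galois extensions, not a routine change-of-rings or descent along $k\subseteq B$, and your sketch gives no indication of how it would be proved. A secondary point you leave unaddressed is the sidedness: the condition that actually transfers through $B$ is the one for the right trivial modules, i.e.\ over $H^{op}$ and $L^{op}$, so one must also use the left/right symmetry of the AS Gorenstein condition for noetherian Hopf algebras with bijective antipode (cf.\ Lemma \ref{lem:AS-Gor} and \cite[Lemma 3.2]{BZ}) to recover the statement as defined via ${}_\varepsilon k$.
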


The notion of a model category was introduced by Quillen to provide an axiomatization of homotopy theory. 
A monoidal model category is a model category equipped with a compatible monoidal structure, ensuring that its homotopy category inherits a tensor triangulated structure; we refer to \cite{H1} for details.

Let $H$ be a Hopf algebra over a field $k$. It is well-known that the category of left $H$-modules $_H\mathcal{M}$ forms a monoidal category under tensor product over $k$, with $k$ as the unit object. If ${\rm Gpd}_H({_\varepsilon}k)={\rm l.Ggldim}(H)$ is finite, then in analogy with \cite[Theorem 8.6]{H2}, the category $_H\mathcal{M}$ admits a Gorenstein projective model structure. The associated homotopy category obtained by formally inverting the weak equivalences, is then triangle-equivalent to the stable category $\underline{\rm GProj}(H)$ of Gorenstein projective $H$-modules. Moreover, ${_B\!\M^H_B}$ is a monoidal category with tensor product over $B$, and there is a Gorenstein projective model structure on ${_B\!\M^H_B}$ as well (cf. Lemma \ref{lem:mcs-B-B}). In Section 4, we shall prove the following result; cf. Theorems \ref{thm:m-m-cat} and \ref{thm:mMTe-mmc}.

\begin{thm}\label{thm-4}
Assume ${\rm Gpd}_H({_\varepsilon}k)$ is finite. Then the categories $_H\mathcal{M}$ and  ${_B\!\M^H_B}$, endowed with the Gorenstein projective model structure, are monoidal model categories. Moreover, their stable categories of Gorenstein projective modules are tensor triangulated categories, with the tensor product taken over $k$ for $H$-modules and  over $B$ for modules in ${_B\!\M^H_B}$.
\end{thm}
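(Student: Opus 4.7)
The plan is to verify Hovey's two criteria for a monoidal model category --- the pushout-product axiom and the unit axiom --- for each of the two categories in turn. Once these are established, the homotopy categories (which coincide with the stable categories of Gorenstein projectives) automatically carry tensor triangulated structures, since the Gorenstein projective model structure is stable in Hovey's sense: syzygy and cosyzygy functors yield inverse self-equivalences on the homotopy category, and the monoidal structure descends by standard results from \cite{H1}. The existence of the Gorenstein projective model structure on ${}_H\M$ under the assumption $\Gpd_H({}_\varepsilon k)<\infty$ follows from Hovey's Theorem 8.6 applied to the cotorsion pair $(\text{GProj}(H),\mathcal{W})$, where $\mathcal{W}$ is the class of modules of finite projective dimension; the analogous structure on ${}_B\M_B^H$ is set up in Lemma~\ref{lem:mcs-B-B}. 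Crucially, Theorem~\ref{thm-1} ensures that every $H$-module has finite Gorenstein projective dimension, so $\mathcal{W}$ coincides with the class of modules of finite injective dimension and provides functorial cofibrant and fibrant replacements.

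For ${}_H\M$, the central technical point is closure of $\text{GProj}(H)$ under $\otimes_k$ with the diagonal $H$-action defined via the comultiplication $\Delta$. Given totally acyclic projective resolutions $P^\bullet\to M$ and $Q^\bullet\to N$, the total complex $T^\bullet:=\Tot(P^\bullet\otimes_k Q^\bullet)$ consists of projective $H$-modules (tensor products of projective $H$-modules over $k$ are projective via the Hopf structure, using that $k$ is a field), is exact because every $H$-module is $k$-flat, and is $\Hom_H(-,R)$-exact for every projective $R$ by the adjunction $\Hom_H(X\otimes_k Y,R)\cong\Hom_H(X,\Hom_k(Y,R))$ combined with $\Hom_H(P^\bullet,-)$-exactness on the relevant target. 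Once this closure is in hand, the pushout-product axiom for cofibrations follows by a standard diagram chase: if $f,g$ are monomorphisms with Gorenstein projective cokernels, then $\coker(f\Box g)\cong\coker(f)\otimes_k\coker(g)\in\text{GProj}(H)$, and the trivial-cofibration case reduces to $\mathcal{W}$ being closed under $\otimes_k$ with objects of $\text{GProj}(H)$. For the unit axiom, take any Gorenstein projective cofibrant replacement $p:QK\to{}_\varepsilon k$; for cofibrant $X$, the short exact sequence $0\to\ker(p)\otimes_k X\to QK\otimes_k X\to X\to 0$ remains exact by $k$-flatness, with $\ker(p)\otimes_k X\in\mathcal{W}$ by the same closure, so $QK\otimes_k X\to X$ is a weak equivalence.

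For ${}_B\M_B^H$ with tensor $\otimes_B$ and unit $B$, the strategy is the same. The Ehresmann--Schauenburg bialgebroid picture developed in Section~3 transports the closure properties: objects of ${}_B\M_B^H$ correspond to modules over $(B^e)^{co H}$ whose category is monoidally equivalent to ${}_H\M$ (cf.~\cite{S1, Bi}), so Gorenstein projectivity is preserved under $\otimes_B$ and $\mathcal{W}_{B^e}$ is likewise closed under tensoring with Gorenstein projectives. Since $\Gpd_{B^e}(B)=\Gpd_H({}_\varepsilon k)<\infty$ by Theorem~\ref{thm-2}, the cofibrant replacement of the unit $B$ is finitely built from Gorenstein projectives in ${}_B\M_B^H$, and the unit and pushout-product axioms follow verbatim as in the case of ${}_H\M$. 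The final step --- tensor triangulated structure on the stable categories $\underline{\text{GProj}}(H)$ and the stable category associated to ${}_B\M_B^H$ --- is then the standard passage from monoidal stable model categories to tensor triangulated homotopy categories.

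The main obstacle I anticipate is the verification that $\Tot(P^\bullet\otimes_k Q^\bullet)$ is $\Hom_H(-,R)$-exact for every projective $R$; this is the nontrivial content of the ``closure of $\text{GProj}$ under $\otimes_k$'' statement. The approach via the tensor-hom adjunction requires that $\Hom_k(N,R)$ be given a genuine left $H$-module structure making the adjunction $H$-linear, which is precisely where the bijective antipode hypothesis on $H$ is used. The parallel verification for ${}_B\M_B^H$ under $\otimes_B$ is more delicate because the tensor product is not merely $k$-linear, and one must invoke the bialgebroid structure on $(B^e)^{co H}$ together with the $H$-Galois property of $B$ to obtain the analogous adjunction and closure.
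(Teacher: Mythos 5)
Your overall architecture (pushout--product axiom plus unit axiom, then descent to the stable category; transport to the bialgebroid/Hopf algebra $(B^e)^{co H}\cong L$ for the second category) matches the paper, and your treatment of the unit axiom is essentially the paper's. However, there is a genuine gap in the central technical step, the closure of ${\rm GProj}(H)$ under $\otimes_k$. First, your construction is the wrong one: if $\mathbf{P}$ and $\mathbf{Q}$ are \emph{complete} (doubly unbounded, totally acyclic) resolutions with $M$ and $N$ as syzygies, then $M\otimes_k N$ is \emph{not} a syzygy of $\Tot(\mathbf{P}\otimes_k\mathbf{Q})$ --- the degree-$n$ cycles of the totalization receive contributions from all $P_i\otimes Q_{n-i}$, not just from $Z(\mathbf{P})\otimes Z(\mathbf{Q})$. (This is the familiar obstruction to naively multiplying complete resolutions.) The paper instead tensors the single totally acyclic complex $\mathbf{P}$ with the \emph{module} $N$; since $P_i\otimes N$ is projective by Lemma \ref{lem:fact}(2), the complex $\mathbf{P}\otimes N$ is an acyclic complex of projectives with $M\otimes N$ as a syzygy, and only total acyclicity remains to be checked.

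Second, your route to ${\rm Hom}_H(-,R)$-exactness does not close. The adjunction gives ${\rm Hom}_H(\mathbf{P}\otimes N,R)\cong{\rm Hom}_H(\mathbf{P},{\rm Hom}_k(N,R))$, but ${\rm Hom}_k(N,R)$ is not a projective $H$-module in general, so total acyclicity of $\mathbf{P}$ says nothing about this complex. You flag this as ``the main obstacle'' but never resolve it, and tellingly your argument nowhere uses the hypothesis ${\rm Gpd}_H({}_\varepsilon k)<\infty$ in the closure step --- yet closure of Gorenstein projectives under $\otimes_k$ is not known (and not expected) for arbitrary Hopf algebras. The paper's Lemma \ref{lem:Gproj} resolves this precisely by invoking the $H$-monomorphism $\iota\colon{}_\varepsilon k\to\Lambda$ with ${\rm pd}_H(\Lambda)<\infty$ supplied by Lemma \ref{lem:monic}: one shows $\Lambda\otimes\mathbf{P}\otimes N$ has projective syzygies (a finite-projective-dimension bookkeeping argument via Lemma \ref{lem:pd}), hence is contractible, and then uses the adjunction ${\rm Hom}_H(\mathbf{P}\otimes N,{\rm Hom}_k(\Lambda,Q))\cong{\rm Hom}_H(\Lambda\otimes\mathbf{P}\otimes N,Q)$ together with the fact that $\iota$ makes $Q$ a direct summand of ${\rm Hom}_k(\Lambda,Q)$. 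Without this mechanism your pushout--product axiom, and hence the whole theorem, is unproved. A smaller inaccuracy: ${}_B\M{}_B^H$ is equivalent to ${}_L\M$ for the \emph{other} Hopf algebra $L=(B^e)^{co H}$, not to ${}_H\M$; the transported tensor product over $B$ becomes $\otimes_k$ of $L$-modules, which is why Lemma \ref{lem:Gproj} (applied to $L$) and Proposition \ref{Gpd-L-RB-prop} are what the paper actually uses there.
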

\noindent It is worth noting that the unit object $k$ (resp. $B$) of the tensor product need not be Gorenstein projective. Consequently, the existence of a tensor unit in the stable category is not automatic. It turns out that such a tensor unit is provided by a special right Gorenstein projective approximation of $k$  (resp. of $B$); in view of the Gorenstein projective model structure, this approximation coincides with the cofibrant replacement of $k$  (resp. of $B$).
 
\section{Preliminaries}
\noindent
In this section, we record certain prerequisite notions and facts which are
used throughout the paper, including definitions concerning modules over Hopf algebras, Hopf-Galois extensions, Gorenstein projective modules and model categories.

Throughout this paper, $k$ is a field and all algebras are over $k$. Unadorned $\otimes$ means $\otimes_k$. For any algebra $A$, $A^{op}$ is the opposite algebra of $A$, and $A^e:= A\otimes A^{op}$ is the enveloping algebra of $A$. Usually we work with left modules. A right $A$-module is viewed as an $A^{op}$-module, and an $A$-$A$-bimodule is the same as an $A^e$-module.
\vspace{0.1in}

\noindent
{\sc I.\ Modules over Hopf algebras.}
Let \((H, m, u, \Delta,\varepsilon)\)
be a Hopf algebra over $k$ with counit $\varepsilon: H\rightarrow k$ and antipode
$S: H\rightarrow H$. We will use Sweedler notation for the comultiplication
$\Delta: H\rightarrow H\otimes  H$, that is $\Delta(h)=\sum h_{1}\otimes h_{2}$
for any $h\in H$. Unless otherwise specified $H$-modules will be left modules.

Let $M$ and $N$ be any $H$-modules. The tensor product $M\otimes  N$ will be considered
to be an $H$-module via the comultiplication $\Delta$, i.e.
$h(m\otimes n) = \sum h_1 m \otimes h_2 n$. Also, ${\rm Hom}_k(M,N)$ is an $H$-module
via $(h \rhu f)(m)=\sum h_{1}f(S(h_{2})m)$ for any $f\in {\rm Hom}_k(M,N)$.
Similarly, if $M$ and $N$ are right $H$-modules, then $M\otimes  N$ and
${\rm Hom}_{k}(M,N)$ are right $H$-modules. It is clear that
${\rm Hom}_H(M,N)$ is precisely the subspace of $H$-invariant elements of ${\rm Hom}_{k}(M,N)$,
i.e.
${\rm Hom}_k(M,N)^H:=\{f\in{\rm Hom}_k(M,N) \mid h \rhu f=\varepsilon(h)f\text{ for all }h\in H\}$.

The following is well-known. We refer to \cite[Lemma 1.1]{BG} for (1), and \cite[1.2]{BG} for (2).

\begin{lem}\label{lem:fact}
\begin{enumerate}
    \item Let $M$, $N$ and $L$ be $H$-modules. Then the natural adjoint isomorphism
${\rm Hom}_{k}(M\otimes  N, L)\cong {\rm Hom}_{k}(M, {\rm Hom}_{k}(N , L))$
restricts to an isomorphism
	\[ {\rm Hom}_{H}(M\otimes  N, L)\cong {\rm Hom}_{H}(M, {\rm Hom}_{k}(N , L)).
	\]
 \item Let $P$ be a projective $H$-module. For any $H$-module $M$,
	$P\otimes M$ is a projective $H$-module.
\end{enumerate}
\end{lem}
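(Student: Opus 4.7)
For part (1), the plan is to take the standard $k$-linear adjunction isomorphism $\phi: \Hom_k(M\otimes N, L) \to \Hom_k(M, \Hom_k(N,L))$ given by $\phi(f)(m)(n)=f(m\otimes n)$, and verify that it carries $H$-invariants to $H$-invariants in both directions. Concretely, I would unwind both sides: $f$ being $H$-linear as a map $M\otimes N\to L$ means $f(\sum h_1 m\otimes h_2 n)=hf(m\otimes n)$, while $\phi(f)$ being $H$-linear into $\Hom_k(N,L)$ (with the action $h\rhu g=\sum h_1 g(S(h_2)-)$) means $\phi(f)(hm)(n)=\sum h_1 f(m\otimes S(h_2)n)$. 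To go from the first identity to the second, I would substitute $n\mapsto S(h_2)n$ in the first and use the antipode axiom $\sum h_1 S(h_2)=\varepsilon(h)1$ after applying $\Delta$ to $h$; this is the only subtle bookkeeping point. The reverse implication is dual: apply $\phi(f)(hm)$ to $n$ and run the same Sweedler calculation backwards using $\sum h_1 S(h_2)=\varepsilon(h)1$ again. No bijectivity of $S$ is needed, only its antimultiplicative-coalgebra behaviour together with the antipode identity.

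For part (2), the strategy is to reduce to the case $P=H$ and then exhibit $H\otimes M$ as a free $H$-module. Since $P$ is projective, it is a direct summand of a free module $H^{(I)}$, and $(-)\otimes M$ is an additive functor, so $P\otimes M$ becomes a summand of $(H\otimes M)^{(I)}$; thus it suffices to handle $H\otimes M$ with its diagonal $H$-action. The key is the classical untwisting isomorphism $H\otimes M \xrightarrow{\sim} H\otimes M^{\mathrm{triv}}$ (with $H$ acting only on the first tensorand on the right-hand side), given by $\beta(h\otimes m)=\sum h_1\otimes S(h_2)m$ with inverse $\alpha(h\otimes m)=\sum h_1\otimes h_2 m$. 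Verifying $\alpha\beta=\id=\beta\alpha$ uses the antipode identity, and $H$-linearity of $\alpha$ is immediate from $\Delta$ being an algebra map. Once this isomorphism is in hand, choosing a $k$-basis of $M$ presents $H\otimes M^{\mathrm{triv}}$ as a free $H$-module, which completes the argument.

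The result is essentially folklore (attributed in the paper to \cite{BG}), and neither part presents a genuine obstacle; the only place one has to be careful is in part (1), where one must correctly identify the $H$-module structure on $\Hom_k(N,L)$ (which involves $S$) and keep the Sweedler indices straight when checking that the adjunction restricts. The argument works verbatim without invoking bijectivity of $S$, so the standing assumption of the paper is not needed for this particular lemma.
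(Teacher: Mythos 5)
Your proof is correct; the paper gives no argument of its own here but simply cites \cite[Lemma 1.1 and 1.2]{BG}, and what you write is precisely the standard verification that reference supplies (the adjunction restricts to $H$-invariants, and $H\otimes M$ with the diagonal action untwists to a free module). One negligible slip: in the converse direction of part (1) the cancellation actually uses the other antipode axiom $\sum S(h_1)h_2=\varepsilon(h)1$ rather than $\sum h_1S(h_2)=\varepsilon(h)1$, but both hold in any Hopf algebra, and your observation that bijectivity of $S$ is not needed for this lemma is accurate.
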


In general, the flip map $\tau: M\otimes N\rightarrow N\otimes M$, defined by $\tau(m\otimes n)=n\otimes m$, is not necessarily an $H$-isomorphism unless $H$ is cocommutative. However, we have the following.
 
\begin{lem}\label{lem:fact'} 
Suppose the antipode of $H$ is bijective. The following hold.
\begin{enumerate}
    \item Let $M$, $N$ and $L$ be $H$-modules, with the $H$-module structure on ${\rm Hom}_{k}(M, L)$ defined by $(h \rightharpoondown f)(m) = \sum h_{2}f(S^{-1}(h_{1}) m)$. Then the natural adjoint isomorphism
${\rm Hom}_{k}(M\otimes  N, L)\cong {\rm Hom}_{k}(N, {\rm Hom}_{k}(M, L))$
restricts to an isomorphism
	\[ {\rm Hom}_{H}(M\otimes N, L)\cong {\rm Hom}_{H}(N, {\rm Hom}_{k}(M, L)).
	\]
  \item Let $P$ be a projective $H$-module. For any $H$-module $M$, $M\otimes P$ is a projective $H$-module.
\end{enumerate}
\end{lem}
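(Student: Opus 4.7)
The plan for part (1) is to take the standard $k$-linear adjoint bijection $\Phi\colon \Hom_k(M\otimes N, L) \to \Hom_k(N, \Hom_k(M,L))$ defined by $\Phi(\phi)(n)(m) = \phi(m\otimes n)$, and to verify that it restricts to a bijection between the $H$-linear subspaces when $\Hom_k(M,L)$ is endowed with the new action $\rightharpoondown$. Concretely, I will translate $H$-linearity of $\phi$ (the relation $\phi(\sum h_1 m \otimes h_2 n) = h\phi(m\otimes n)$) and $H$-linearity of $\psi := \Phi(\phi)$ (the relation $\psi(hn)(m) = \sum h_2 \phi(S^{-1}(h_1) m \otimes n)$) into equivalent identities and show each implies the other.

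The key computation rests on the bijectivity of $S$. Applying $S^{-1}$ to the standard antipode axioms and using that $S^{-1}$ is an algebra anti-homomorphism gives $\sum h_2 S^{-1}(h_1) = \varepsilon(h)1 = \sum S^{-1}(h_2) h_1$. Combined with coassociativity, writing $\Delta^{(2)}(h) = \sum h_{(1)} \otimes h_{(2)} \otimes h_{(3)}$, one obtains the $H\otimes H$-level identities
\[
\sum h_{(2)} S^{-1}(h_{(1)}) \otimes h_{(3)} = 1 \otimes h,\qquad \sum h_{(3)} \otimes S^{-1}(h_{(2)}) h_{(1)} = h \otimes 1.
\]
For the direction ``$\phi$ is $H$-linear $\Rightarrow$ $\psi$ is $H$-linear'', I expand $\sum h_2 \phi(S^{-1}(h_1) m \otimes n)$ via $H$-linearity of $\phi$ into $\sum \phi(h_{(2)} S^{-1}(h_{(1)}) m \otimes h_{(3)} n)$ and collapse the inner $H\otimes H$-sum via the first identity to land on $\phi(m\otimes hn)=\psi(hn)(m)$. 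The reverse direction is parallel: starting from $\sum \phi(h_1 m \otimes h_2 n)$, apply the relation characterising $H$-linearity of $\psi$ term by term to reach $\sum h_{(3)} \phi(S^{-1}(h_{(2)}) h_{(1)} m \otimes n)$, and then collapse using the second identity to recover $h\phi(m\otimes n)$.

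Part (2) then follows formally from (1). Applying the adjunction with source $M\otimes P$ yields a natural isomorphism of functors
\[
\Hom_H(M\otimes P, -) \;\cong\; \Hom_H\bigl(P, \Hom_k(M, -)\bigr).
\]
Because a short exact sequence of $H$-modules is exact over the field $k$ and $\Hom_k(M, -)$ preserves exact sequences of $k$-vector spaces, the functor $\Hom_k(M, -)$ (with the induced $\rightharpoondown$-action) is exact on $H$-modules. Composed with the exact functor $\Hom_H(P,-)$ coming from projectivity of $P$, this shows $\Hom_H(M\otimes P, -)$ is exact, so $M\otimes P$ is projective.

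The main obstacle is the Sweedler-index bookkeeping in part (1): two successive coproducts of $h$ must be tracked while both $S^{-1}$ and the $H$-actions on distinct tensor factors are pushed through $\phi$. The correct matching of the currying order (from $N$ rather than from $M$) with the antipode-inverse action $\rightharpoondown$ is what makes the adjunction $H$-equivariant, and it is precisely this matching that requires $S$ to be bijective; the action $h\rhu f$ using $S$ from Lemma \ref{lem:fact} would not produce an $H$-linear isomorphism on this side of the tensor.
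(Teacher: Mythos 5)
Your proposal is correct. The paper states this lemma without proof (presenting it as the bijective-antipode variant of Lemma \ref{lem:fact}, whose proof it also delegates to \cite{BG}), and your argument supplies exactly the expected details: the identities $\sum h_{(2)}S^{-1}(h_{(1)})\otimes h_{(3)}=1\otimes h$ and $\sum h_{(3)}\otimes S^{-1}(h_{(2)})h_{(1)}=h\otimes 1$ both check out, they correctly collapse the two directions of the equivalence in part (1), and the deduction of (2) from (1) via exactness of $\Hom_H(P,\Hom_k(M,-))$ is sound. Your closing remark about why currying from the left tensor factor forces the $S^{-1}$-twisted action $\rightharpoondown$ (and hence the bijectivity hypothesis) is also the right explanation.
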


If $R$ is a right $H$-comodule algebra via $\rho_R: R\rightarrow R\otimes H$, then a relative right-right $(R,H)$-Hopf module is a vector space with a right $R$-action and a right $H$-coaction $\rho_M$ such that
$\rho_M(mr)=\rho_M(m)\rho_R(r)=\sum m_0 r_0\otimes m_1 r_1$ for all $m\in M$ and $r\in R$. A relative left-right $(R,H)$-Hopf module is defined similarly.
The category of relative right-right (resp., left-right) $(R,H)$-Hopf modules is denoted by  $\mathcal{M}_R^H$ (resp., $_R\mathcal{M}^H$).
If $B$ is another right $H$-comodule algebra, the category of relative $(R, B, H)$-Hopf bimodules (equipped with left $R$-action, right $B$-action, and right $H$-coaction satisfying appropriate compatibilities) is denoted by $_R\mathcal{M}_H^H$ following the same naming convention.
\vspace{0.1in}

\noindent
{\sc II. Hopf-Galois extensions and Schauenburg's structure theorem.}
Let $H$ be a Hopf algebra with a bijective antipode $S$. Let $B$ be a right $H$-comodule algebra with the structure map $\rho: B\rightarrow B\otimes H$, $b\mapsto \sum b_0\otimes b_1$. Note that $\rho$ is an algebra homomorphism. The coinvariant subalgebra of the $H$-coaction on $B$ is $B^{co H}=\{b\in B\,|\, \rho(b)=b\otimes 1\}$. We denote by
$A=B^{co H}$, and $A\subseteq B$ is said to be an $H$-extension.
For an $H$-extension $A\subseteq B$,  $B^{op}$ is a left $H$-comodule algebra with the structure map  $\rho': B^{op}\rightarrow H\otimes B^{op}$, $b\mapsto \sum S^{-1}b_1\otimes b_0$,  and furthermore, we have $A^{op}={}^{co H}(B^{op})$.

Associated to an $H$-extension $A\subseteq B$, there is a canonical Galois map
\[\beta: B\otimes_A B\rightarrow B\otimes H, \quad b\otimes_A b'\mapsto\sum bb'_0\otimes b'_1.\]
If $\beta$ is bijective, then $A \subseteq B$ is called a right $H$-Galois extension. A left $H$-Galois extension  is defined in a similar manner. Consider the map $\tau: B\otimes H\rightarrow H\otimes B$ given by $b\otimes h\mapsto \sum hS^{-1}b_1\otimes b_0$. This is bijective with $\tau^{-1}(h\otimes b)= \sum b_0\otimes hb_1$. The composition $\beta'=\tau \beta: B\otimes_A B\rightarrow H\otimes B$ is given by $\beta'(b\otimes_A b')= \sum S^{-1}b_1\otimes b_0 b'$. So the $H$-extension $A\subseteq B$ is Galois if and only if $\beta'$ is bijective. The translation map associated to the
$H$-Galois extension $A \subseteq B$ is given by 
\[ \kappa: H \rightarrow B \otimes_A B, \quad h \mapsto \beta^{-1}(1 \otimes h). \]
For any $h \in H$, we adopt the Sweedler-like notation $\kappa(h) = \sum \kappa^1(h) \otimes_A \kappa^2(h)$ to denote the image of $h$ under $\kappa$.

In particular, for an $H$-Galois extension $k \subseteq B$, the algebra $B$ is referred to as an $H$-Galois object. Given another Hopf algebra $L$, an $L$-$H$-bi-Galois object is defined as an $L$-$H$-bicomodule algebra that is simultaneously a left $L$-Galois object and a right $H$-Galois object.


The following is immediate from \cite[Theorem I]{S} and \cite[Theorem 5.6]{SS}.

\begin{thm}\label{ffHGe-thm}
	Let $H$ be a Hopf algebra with a bijective antipode, $B$ be a right $H$-comodule algebra and $A := B^{co H}$. Then the following are equivalent.
	\begin{enumerate}
	    \item $(- \otimes_A B, (-)^{co H})$ is an equivalence between $\M_A$ and $\M^H_B$.
     \item $(B \otimes_A -, (-)^{co H})$ is an equivalence between $_A\!\M$ and $_B\!\M^H$.
     \item $B$ is a faithfully flat left $A$-module, and $A \subseteq B$ is an $H$-Galois extension.
     \item $B$ is a faithfully flat right $A$-module, and $A \subseteq B$ is an $H$-Galois extension.
     \item $A \subseteq B$ is a Hopf Galois extension, and the comodule algebra $B$ is right $H$-equivariantly projective as a left $A$-module, i.e., there exists a right $H$-colinear and left $A$-linear splitting of the multiplication $A \otimes B \to B$.
	\end{enumerate}
\end{thm}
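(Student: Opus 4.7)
The plan is to run the cycle (3) $\Rightarrow$ (1) $\Rightarrow$ (3), the symmetric cycle (4) $\Rightarrow$ (2) $\Rightarrow$ (4), the equivalence (3) $\Leftrightarrow$ (4) via op-algebra duality, and finally (3) $\Leftrightarrow$ (5). The two substantive steps are (3) $\Rightarrow$ (1) and (3) $\Rightarrow$ (5), while the remaining links are essentially formal, with the technical content pulled from the cited papers of Schneider and Schauenburg-Schneider. For (3) $\Rightarrow$ (1), I would work with the adjoint pair $F = -\otimes_A B \colon \M_A \to \M_B^H$ and $G = (-)^{co H} \colon \M_B^H \to \M_A$, and show that its unit and counit are isomorphisms. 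The counit $N^{co H} \otimes_A B \to N$ is handled by observing that, for any Hopf module $N$, the map $N \otimes_A B \to N \otimes H$ obtained by tensoring $\beta$ on the left with $N$ over $B$ is an isomorphism; chasing coinvariants through this identifies the counit with a canonical bijection. The unit $M \to (M \otimes_A B)^{co H}$ is verified first for $M = A$ by direct inspection, then extended to arbitrary $M \in \M_A$ by presenting $M$ as a cokernel of free $A$-modules, applying the right-exact functor $F$ and the left-exact composite $G \circ F$, and invoking faithful flatness of $_A B$. The converse (1) $\Rightarrow$ (3) is immediate: an equivalence forces $F$ to be exact and conservative, giving faithful flatness of $_A B$, and applying the two functors to appropriate canonical maps extracts bijectivity of $\beta$.

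The equivalence (3) $\Leftrightarrow$ (4) follows from the observation recorded in Section II that $B^{op}$ is a left $H$-comodule algebra with $(B^{op})^{co H} = A^{op}$, and that the map $\tau$ converts the right-handed Galois map $\beta$ into the left-handed $\beta'$; faithful flatness on the two sides corresponds across the op-involution. The cycle (4) $\Leftrightarrow$ (2) is then strictly symmetric to (3) $\Leftrightarrow$ (1) and introduces no new idea.

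For (3) $\Leftrightarrow$ (5), assuming (3), I would build the right $H$-colinear, left $A$-linear splitting of the multiplication $A \otimes B \to B$ by descending from a natural Hopf-module-level splitting $B \otimes_A B \to A \otimes B$ built out of the translation map $\kappa$, using faithful flatness of $_A B$ to upgrade $k$-linearity to $A$-linearity. Conversely, given a splitting $s \colon B \to A \otimes B$ in (5), $B$ sits as a direct summand of a free left $A$-module, hence is flat, and $s$ provides the retraction needed to conclude faithful flatness (the Galois hypothesis is assumed outright in (5)). The principal obstacle is this descent step in (3) $\Rightarrow$ (5), closely mirrored by the descent used for the unit in (3) $\Rightarrow$ (1); in both cases the technical core is to show that a naturally constructed $k$-linear isomorphism refines to a morphism of Hopf modules with the required base-algebra linearity, and this is exactly where the depth of the cited Schneider and Schauenburg-Schneider theorems is being invoked.
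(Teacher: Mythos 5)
The paper offers no argument for this theorem: it is stated as immediate from Schneider's Theorem~I \cite{S} and Schauenburg--Schneider \cite[Theorem 5.6]{SS}, and those citations carry all of the content. To the extent that you defer the ``technical core'' to the same sources, you are taking the paper's approach. However, your outline is offered as a proof skeleton, and one of its load-bearing links fails: the claim that $(3)\Leftrightarrow(4)$ follows from the op-involution. Passing to $B^{op}$ converts the right $H$-comodule algebra $B$ into a \emph{left} $H$-comodule algebra (via $b\mapsto \sum S^{-1}(b_1)\otimes b_0$) and the right Galois map $\beta$ into the left-handed $\beta'$; it therefore transports the statement ``$_AB$ is faithfully flat and $A\subseteq B$ is right $H$-Galois'' to ``$B^{op}$ is faithfully flat as a right $A^{op}$-module and $A^{op}\subseteq B^{op}$ is \emph{left} $H$-Galois.'' That is the mirror-image theorem for left comodule algebras, not condition (4) for the same right comodule algebra. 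The duality thus relates the two one-sided versions of the whole theorem to each other; it does not interchange left and right faithful flatness within one of them, so your two cycles $(3)\Rightarrow(1)\Rightarrow(3)$ and $(4)\Rightarrow(2)\Rightarrow(4)$ remain disconnected. The equivalence of left and right faithful flatness for a right $H$-Galois extension is precisely the nontrivial core of Schneider's Theorem~I; his proof routes both conditions through the symmetric intermediate statement that $\beta$ is surjective and $B$ is relatively injective (equivalently, coflat) as an $H$-comodule, and that is where bijectivity of the antipode enters. No formal symmetry substitutes for this step.

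A smaller point: in $(5)\Rightarrow(3)$, the splitting $s\colon B\to A\otimes B$ exhibits $_AB$ as a direct summand of a free module, hence projective and flat, but faithfulness does not follow from $s$ alone (a projective module need not be faithfully flat, and $s$ does not obviously split the inclusion $A\hookrightarrow B$); one must use the Galois hypothesis, e.g.\ via the translation map, which is again the substance of \cite[Theorem 5.6]{SS}. If your intent is merely to record how the theorem decomposes over the cited results, the proposal is an acceptable reading of the literature; as a standalone argument, the $(3)\Leftrightarrow(4)$ link is a genuine gap.
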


Two Hopf algebras $H$ and $L$ are said to be monoidally Morita--Takeuchi equivalent, if their
comodule categories are monoidally equivalent; this condition is known to be equivalent to the existence of an $L$-$H$-biGalois extension of the base field $k$; cf. \cite{S1}.

Let $R$ and $B$ be right $H$-comodule algebras. Put $L := (R \otimes B)^{co H} = R \Box_H B^{op}$. It is evident that $L$ forms a subalgebra of $R \otimes B^{op}$.
We fix the formal notation
\[ l = \sum l^1 \otimes l^2 \in R \otimes B^{op} \]
for $l \in L$, so that $ll' = \sum l^1{l'}^1 \otimes {l'}^2l^2$.

By extending Schneider's structure theorem for relative Hopf modules, Schauenburg described the structure of relative Hopf bimodules as follows, cf. \cite[Theorem 3.3]{S2}. Recall that an $H$-Galois extension $A\subseteq B$ is said to be faithfully flat if $B$ is faithfully flat as a left or right $A$-module.

\begin{thm}\label{ff-Hopf-Galois-ext-Hopf-bimod-cat-str-thm}
Let $H$ be a Hopf algebra and $B$ a right faithfully flat $H$-Galois extension of $A= B^{co H}$. Consider a right $H$-comodule algebra $R$.
	There exists a category equivalence
	\[ F: {_R\mathcal{M}^H_B} \longrightarrow {_L\mathcal{M}}, \;\; M \mapsto M^{co H}, \]
	with quasi-inverse
	\[ G: {_L\mathcal{M}} \longrightarrow {_R\mathcal{M}^H_B}, \;\; N \mapsto N \otimes_A B. \]
For $N \in {_L\mathcal{M}}$, the action $\vartriangleright$ denotes the module structure, with $N$ being a right $A$-module via $na = (1 \otimes a) \vartriangleright n$. The $L$-module structure of $M^{co H}$ is defined by
	\[ l \vartriangleright m = \sum l^1 m l^2.\]
The right $B$-module and $H$-comodule structures on $N \otimes_A B$ are induced by those of $B$, while the left $R$-module stucture is given by
\[ r(n \otimes_A b) = \sum \big( r_0 \otimes \kappa^1(r_1) \big) \vartriangleright n \otimes_A \kappa^2(r_1) b.  
\]
\end{thm}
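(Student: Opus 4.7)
The plan is to build the equivalence by leveraging Schneider's theorem (Theorem \ref{ffHGe-thm}) for the right $B$-module, right $H$-comodule part of the structure, and then track how the extra left $R$-action transports across the equivalence via the translation map $\kappa$. Forgetting the $R$-action, any $M\in{_R\mathcal{M}^H_B}$ is an object of $\mathcal{M}^H_B$, so the right-handed form of Theorem \ref{ffHGe-thm} gives a natural isomorphism $M^{coH}\otimes_A B\cong M$, $m\otimes_A b\mapsto mb$, and $M^{coH}$ is automatically a right $A$-module. The whole game is thus to promote this $A$-module structure on $M^{coH}$ to an $L$-module structure that records the $R$-action, and conversely to produce an $R$-action on $N\otimes_A B$ from an $L$-action on $N$.

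First I would verify the formula $l\vartriangleright m=\sum l^1 m l^2$ defines an $L$-action on $M^{coH}$. For $l=\sum l^1\otimes l^2\in L=R\square_H B^{op}$ and $m\in M^{coH}$, the element $\sum l^1 m l^2\in M$ is $H$-coinvariant: applying $\rho_M$ and using $H$-colinearity of the $R$- and $B$-actions together with $m_0\otimes m_1=m\otimes 1$ reduces the coaction to $\sum l^1_{\phantom{1}0}m l^2_{\phantom{2}0}\otimes l^1_{\phantom{1}1}l^2_{\phantom{2}1}$, and the defining identity $\sum l^1_{\phantom{1}0}\otimes l^2\otimes l^1_{\phantom{1}1}=\sum l^1\otimes l^2_{\phantom{2}0}\otimes S^{-1}(l^2_{\phantom{2}1})$ that characterizes the cotensor product collapses this to $\sum l^1 m l^2\otimes 1$. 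Associativity $(ll')\vartriangleright m=l\vartriangleright(l'\vartriangleright m)$ is then forced by $ll'=\sum l^1{l'}^1\otimes {l'}^2 l^2$ together with associativity of the bimodule $M$, and restriction to $l=1\otimes a\in L$ recovers the original right $A$-action on $M^{coH}$.

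Next I would equip $N\otimes_A B$ with an $R$-action. Well-definedness over $A$ is the delicate step: using the identity $\sum\kappa^1(h)\otimes_A\kappa^2(h)a=\sum\kappa^1(h)a\otimes_A\kappa^2(h)$ available for $a\in A$ (it is $H$-coinvariant), the formula descends to the tensor product over $A$. Compatibility with the right $B$-action is immediate from the formula. The $R$-action is an action because of the multiplicative behaviour of $\kappa$: $\kappa(hh')=\sum\kappa^1(h)\kappa^1(h')\otimes_A\kappa^2(h')\kappa^2(h)$, which, combined with $H$-colinearity of $\rho_R$, reorganizes $r(r'(n\otimes_A b))$ into $(rr')(n\otimes_A b)$; here the $L$-action enters through elements of the form $r_0\otimes\kappa^1(r_1)\in L$, which lie in $L$ precisely because $\beta(\kappa(r_1))=1\otimes r_1$. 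Right $H$-colinearity of this $R$-action follows from $\rho_B(\kappa^2(h))=\sum\kappa^2(h_1)\otimes h_2$ and $\rho_B(\kappa^1(h))$ paired with the coaction identity for $\kappa^1$.

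Finally, I would check that $F$ and $G$ are quasi-inverse. One direction $FG\cong\id$ amounts to identifying $(N\otimes_A B)^{coH}$ with $N$: Schneider's theorem already gives $(N\otimes_A B)^{coH}\cong N$ as right $A$-modules via $n\mapsto n\otimes_A 1$, and under the transported $L$-action one computes $l\vartriangleright(n\otimes_A 1)=\sum l^1\otimes_A l^2=(l^1\otimes l^2)(n\otimes_A 1)$ using $\kappa(l^1_{\phantom{1}1})$ together with the coinvariance of $l$. The other direction $GF\cong\id$ is Schneider's isomorphism $M^{coH}\otimes_A B\cong M$, and the left $R$-action transported back to $M$ via $\kappa$ reproduces the original action because $\sum\kappa^1(r_1)\otimes_A\kappa^2(r_1)b=\beta^{-1}(1\otimes r_1)\cdot b$ and the Galois condition $\beta(b\otimes_A b')=\sum bb'_0\otimes b'_1$. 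The principal obstacle throughout is the bookkeeping for well-definedness of the $R$-action on $N\otimes_A B$ over $\otimes_A$, where one must combine the defining identity of $L$ inside $R\square_H B^{op}$ with the identity $\kappa(h)\cdot a=a\cdot\kappa(h)$ for $a\in A$; once this is under control, the rest of the theorem reduces to diagrammatic manipulations with $\kappa$ and the $H$-colinearity of the various actions.
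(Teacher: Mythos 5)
The paper offers no proof of this statement: it is quoted verbatim from Schauenburg \cite[Theorem 3.3]{S2}, whose own argument is exactly the strategy you describe — apply Schneider's structure theorem to the underlying object of $\mathcal{M}^H_B$ and transport the extra left $R$-action across the equivalence using the translation map $\kappa$. So your architecture is the right one and there is no competing approach in the paper to compare against. Two of the identities you invoke are, however, written incorrectly, and since they carry the whole computation you should fix them. First, the descent of the $R$-action to $N\otimes_A B$ requires the centrality of $\kappa(h)$ in $B\otimes_A B$ over $A$, namely $\sum a\,\kappa^1(h)\otimes_A\kappa^2(h)=\sum\kappa^1(h)\otimes_A\kappa^2(h)\,a$; the identity you display, $\sum\kappa^1(h)\otimes_A\kappa^2(h)a=\sum\kappa^1(h)a\otimes_A\kappa^2(h)$, is just the balancing relation of $\otimes_A$ and does nothing (you do state the correct version, $\kappa(h)\cdot a=a\cdot\kappa(h)$, in your closing sentence). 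Second, the translation map is \emph{anti}-multiplicative in the sense $\kappa(hh')=\sum\kappa^1(h')\kappa^1(h)\otimes_A\kappa^2(h)\kappa^2(h')$, not the order you wrote; with your version the computation of $r(r'(n\otimes_A b))$ would not close up to $(rr')(n\otimes_A b)$, precisely because the second tensor leg of $L$ multiplies in $B^{op}$. Finally, the claim that $\sum r_0\otimes\kappa^1(r_1)$ lies in $L$ needs more than $\beta(\kappa(r_1))=1\otimes r_1$: it requires the colinearity identities of $\kappa$ (e.g.\ $\sum\kappa^1(h)_0\otimes_A\kappa^2(h)\otimes\kappa^1(h)_1=\sum\kappa^1(h_2)\otimes_A\kappa^2(h_2)\otimes S^{-1}(h_1)$), which you should cite or verify. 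With these repairs your sketch is a correct reconstruction of the cited proof.
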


\vspace{0.1in}
\noindent
{\sc III.\ Gorenstein projective modules and dimensions.}
Let $R$ be an associative ring with unit. An acyclic complex of projective $R$-modules
$$\mathbf{P} = \cdots\longrightarrow P_{n+1}\longrightarrow P_{n}\longrightarrow P_{n-1}\longrightarrow\cdots$$
is said to be totally acyclic, provided it remains acyclic after applying ${\rm Hom}_R(-, P)$ for any projective module $P$. A module is Gorenstein projective \cite{EJ} if it is isomorphic to a syzygy of such a totally acyclic complex. 

For any $R$-module $M$, the Gorenstein projective dimension of $M$, denoted by ${\rm Gpd}_R(M)$, is defined by declaring that ${\rm Gpd}_R(M)\leq n$ if and only if $M$ admits a Gorenstein projective resolution of length $n$. The Gorenstein global dimension ${\rm Ggldim}(R)$ of $R$ is defined as the supremum of the Gorenstein projective dimension of all $R$-modules. The preference of the terminology is justified by \cite[Theorem 1.1]{BM} which shows that ${\rm Ggldim}(R)$ equals to the supremum of the Gorenstein injective dimension of all $R$-modules. Note that ${\rm Ggldim}(R)$ is bounded by the global dimension ${\rm gldim}(R)$ of $R$, and the equality holds if ${\rm gldim}(R)$ is finite. It is well-known that the global dimension is not symmetric, i.e. there are rings whose left and right global dimensions are different. Analogously, in general ${\rm l.Ggldim}(R)$ and ${\rm r.Ggldim}(R)$ are not equal.

\vspace{0.1in}
\noindent
{\sc IV.\ Model categories.}
The notion of a model category was introduced by Quillen \cite{Q} as an axiomatization of homotopy theory; the term ``model category'' is short for ``a category of models for a homotopy theory''. Specifically,  a model category is category with three specified classes of morphisms, called cofibrations, fibrations and weak equivalences, which satisfy a few axioms which are deliberately reminiscent
of certain properties of continuous maps on topological spaces. We refer to \cite{H1} for details.

Given a model category $\mathcal{C}$, an object $M\in \mathcal{C}$ is called
{\em cofibrant} if $0\rightarrow M$ is a cofibration, it is called {\em fibrant}
if $M\rightarrow 0$ is a fibration, and it is {\em trivial} if $0\rightarrow M$
(equivalently, $M\rightarrow 0$) is a weak equivalence. For any object $X$ in $\mathcal{C}$, by factoring $0\rightarrow X$ as a cofibration $0\rightarrow Q(X)$ follows by a trivial fibration $Q(X)\rightarrow X$, we get a functor $X\rightarrow Q(X)$ such that $Q(X)$ is a cofibrant object. We refer to $Q$ as the cofibrant replacement functor of $\mathcal{C}$.




\section{Gorenstein global dimension of Hopf algebras}
\noindent
Note that $k$ is a trivial $H$-module, i.e. $ha=\varepsilon(h)a$ for all $h\in H$
and $a\in k$.  We denote by ${_\varepsilon}k$ (resp. $k_\varepsilon$) the left
(resp. right) trivial module defined by the counit $\varepsilon$ of $H$. In this section, we intend to show that the Gorenstein projective dimension
of $k$ and the Gorenstein global dimension of Hopf algebra $H$ are equal. 

We begin with the following auxiliary simple lemma. We always assume that the antipode of the Hopf algebra is bijective. This assumption is not too restrictive. In fact, for a noetherian Hopf algebra $H$, if either $H$ is semiprime or $H$ is affine PI, then the antipode $S$ is bijective; cf. \cite[Proposition 1.1]{BZ}. Skryabin conjecture that all noetherian Hopf algebras over a field have bijective antipode; cf. \cite{Sk1}. 

\begin{lem}\label{lem:pd}
	For any $H$-modules $M$ and $N$, we have ${\rm pd}_H(M \otimes N) \leq {\rm pd}_H(M)$. Further, if the antipode of $H$ is bijective, then ${\rm pd}_H(M \otimes N) \leq {\rm pd}_H(N)$.
\end{lem}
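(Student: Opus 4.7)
The plan is to deduce the lemma directly from the two projectivity closure results already available (Lemmas \ref{lem:fact}(2) and \ref{lem:fact'}(2)), together with the exactness of the tensor product over the field $k$. I expect the proof to be essentially a one-step corollary of those lemmas; there is no real obstacle, only a bookkeeping check of what each hypothesis buys.

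For the first inequality, I may assume ${\rm pd}_H(M)=n<\infty$, for otherwise the bound is vacuous. I would pick a projective resolution of $M$ as a left $H$-module,
\[ 0\To P_n\To\cdots\To P_1\To P_0\To M\To 0. \]
Because $k$ is a field, the functor $-\otimes_k N$ is exact on $k$-vector spaces, hence on the underlying complex, and the maps remain $H$-linear when $H$ acts diagonally. This produces an exact sequence
\[ 0\To P_n\otimes N\To\cdots\To P_1\otimes N\To P_0\otimes N\To M\otimes N\To 0 \]
of $H$-modules. By Lemma \ref{lem:fact}(2), each $P_i\otimes N$ is a projective $H$-module, so the sequence is a projective resolution of $M\otimes N$ of length at most $n$. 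Thus ${\rm pd}_H(M\otimes N)\leq n={\rm pd}_H(M)$.

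For the second inequality, the argument is entirely symmetric: under the bijective antipode hypothesis, Lemma \ref{lem:fact'}(2) ensures that $M\otimes P$ is a projective $H$-module whenever $P$ is projective. Taking a projective resolution $Q_\bullet\to N$ of length ${\rm pd}_H(N)$ and tensoring on the left by $M$ yields, again by exactness of $M\otimes_k -$, a projective resolution $M\otimes Q_\bullet\to M\otimes N$ of the same length, and the bound ${\rm pd}_H(M\otimes N)\leq {\rm pd}_H(N)$ follows. The role of the bijectivity of $S$ appears only at the step where projectivity of $M\otimes P$ is invoked; without it, the diagonal $H$-module structure on $M\otimes P$ need not be projective, which is precisely why the second half of the statement is conditional.
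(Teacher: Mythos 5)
Your proposal is correct and follows the paper's own argument exactly: tensor a finite projective resolution with the other module, use exactness of $-\otimes_k-$, and invoke Lemma \ref{lem:fact}(2) (resp.\ Lemma \ref{lem:fact'}(2) under bijectivity of the antipode) to see that each term remains projective. The paper's proof is just a terser version of the same reasoning.
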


\begin{proof}
 Since the inequality to be proved is obvious when ${\rm pd}_H(M)$ is infinite, it suffices to assume that ${\rm pd}_H(M) = n$ is finite.
 By applying the functor $- \otimes N$ to a projective resolution of $M$, we obtain a projective resolution of $M \otimes N$ via Lemma \ref{lem:fact}(2).
 The first inequality thus follows immediately. Analogously,  Lemma \ref{lem:fact'}(2) yields the second inequality.
\end{proof}

The argument of the following result is standard; see e.g. \cite[Proposition 1.2]{ET}. The proof is included for completeness.

\begin{lem}\label{lem:monic}
	If ${\rm Gpd}_H({_\varepsilon}k)$ is finite, then there exists an $H$-monomorphism
	$\iota: {_\varepsilon}k\rightarrow \Lambda$ such that ${\rm pd}_H(\Lambda) = {\rm Gpd}_H({_\varepsilon}k)$.
\end{lem}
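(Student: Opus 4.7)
The plan is to construct $\Lambda$ as an explicit quotient of a projective module, by splicing together an Auslander-Buchweitz-type approximation of ${_\varepsilon}k$ with the canonical embedding of a Gorenstein projective module into a projective one. Set $n:={\rm Gpd}_H({_\varepsilon}k)$. The degenerate case $n=0$ is immediate: then ${_\varepsilon}k$ is itself Gorenstein projective, and one may take $\Lambda$ to be the first projective module appearing to the right of ${_\varepsilon}k$ in a complete projective resolution realising ${_\varepsilon}k$, yielding an embedding $\iota:{_\varepsilon}k\hookrightarrow\Lambda$ with ${\rm pd}_H(\Lambda)=0$.

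Assume henceforth $n\geq 1$. My first step is to produce a short exact sequence
\[0 \longrightarrow L \longrightarrow G \longrightarrow {_\varepsilon}k \longrightarrow 0,\]
with $G$ Gorenstein projective and ${\rm pd}_H(L)=n-1$. This is a standard Auslander-Buchweitz-type approximation (see e.g.\ \cite[Proposition 1.2]{ET}) and can be obtained by induction on $n$: take a short exact sequence $0\to K\to P_0\to {_\varepsilon}k\to 0$ with $P_0$ projective, observe that ${\rm Gpd}_H(K)=n-1$, apply the inductive hypothesis to $K$, and glue the two sequences via a pushout. The bound ${\rm pd}_H(L)\leq n-1$ supplied by this construction is actually an equality, since the displayed sequence forces ${\rm Gpd}_H({_\varepsilon}k)\leq {\rm pd}_H(L)+1$.

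Next, because $G$ is Gorenstein projective, the definition supplies a short exact sequence $0\to G\to P\to G'\to 0$ with $P$ projective and $G'$ again Gorenstein projective. Composing $L\hookrightarrow G\hookrightarrow P$ realises $L$ as an $H$-submodule of $P$, and I set $\Lambda:=P/L$. The inclusion $G\hookrightarrow P$ descends modulo $L$ to an $H$-monomorphism $\iota:{_\varepsilon}k=G/L\hookrightarrow P/L=\Lambda$, whose cokernel is $P/G=G'$, hence Gorenstein projective.

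Finally, I verify ${\rm pd}_H(\Lambda)=n$. From $0\to L\to P\to\Lambda\to 0$, with $P$ projective and ${\rm pd}_H(L)=n-1$, one obtains ${\rm pd}_H(\Lambda)\leq n$. For the reverse inequality, apply the standard estimate ${\rm Gpd}_H({_\varepsilon}k)\leq\max\{{\rm Gpd}_H(\Lambda),{\rm Gpd}_H(G')-1\}$ to the sequence $0\to{_\varepsilon}k\to\Lambda\to G'\to 0$; since $G'$ is Gorenstein projective and ${\rm pd}_H(\Lambda)<\infty$ forces ${\rm Gpd}_H(\Lambda)={\rm pd}_H(\Lambda)$, this gives $n\leq{\rm pd}_H(\Lambda)$, and hence equality. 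The only substantive obstacle is the first step---the existence of the approximation $0\to L\to G\to{_\varepsilon}k\to 0$ with the exact projective dimension of $L$---after which the rest of the argument is a short, formal diagram chase.
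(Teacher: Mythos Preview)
Your proof is correct and follows essentially the same approach as the paper's. The paper obtains the approximation $0\to N\to M\to{_\varepsilon}k\to 0$ by citing \cite[Theorem 2.10]{Hol}, then constructs $\Lambda$ as the pushout of $M\to{_\varepsilon}k$ along $M\hookrightarrow P$; your description of $\Lambda$ as $P/L$ is exactly this pushout, and your verification that ${\rm pd}_H(\Lambda)=n$ (via the estimate on ${\rm Gpd}_H({_\varepsilon}k)$ from the sequence $0\to{_\varepsilon}k\to\Lambda\to G'\to 0$) is a slightly more explicit version of what the paper reads off from the middle row of its pushout diagram.
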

\begin{proof}
Let ${\rm Gpd}_H({_\varepsilon}k) = n$. If $n=0$, i.e. $k$ is a Gorenstein projective module, then the assertion is clear. Now assume that $n\geq 1$. It follows from \cite[Theorem 2.10]{Hol} that there exists an exact sequence $0\rightarrow N\rightarrow M\rightarrow {_\varepsilon}k\rightarrow 0$, where $M$ is a Gorenstein projective $H$-module, and ${\rm pd}_H(N) = n-1$. For $M$,
there is an exact sequence $0\rightarrow M\rightarrow P\rightarrow L\rightarrow 0$
of $H$-modules such that $L$ is Gorenstein projective and $P$ is projective.
We consider the pushout of $M\rightarrow {_\varepsilon}k$ and $M\rightarrow P$:
	\[\xymatrix{ & & 0\ar[d] & 0\ar[d] \\
		0 \ar[r] &N \ar@{=}[d] \ar[r] & M \ar[d]\ar[r] &{_\varepsilon}k \ar[d]\ar[r] &0 \\
		0 \ar[r] &N \ar[r] & P \ar[r] \ar[d] &\Lambda \ar[r]\ar[d] & 0\\
		&  & L \ar[d] \ar@{=}[r] & L\ar[d]\\
		&  & 0 & 0
	}\]
The middle row implies that ${\rm pd}_H(\Lambda) = {\rm pd}_{H}(N) + 1 = n$,
and then, by the right column we get the required $H$-monomorphism ${_\varepsilon}k\rightarrow \Lambda$.
\end{proof}

\begin{lem}\label{lem:Gproj}
	Assume that there is an $H$-monomorphism $\iota: {_\varepsilon}k\rightarrow \Lambda$
	with ${\rm pd}_H(\Lambda) <\infty$. Let $M$, $N$ be any $H$-modules.
 \begin{enumerate}
     \item If $M$ is Gorenstein projective, then $M\otimes  N$ is also Gorenstein projective.
     \item Suppose the antipode of $H$ is bijective. If $N$ is Gorenstein projective, then $M\otimes  N$ is also Gorenstein projective.
 \end{enumerate}
\end{lem}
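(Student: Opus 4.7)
The plan is, for part (1), to form the complex $\mathbf{P}\otimes N$ where $\mathbf{P}$ is a totally acyclic complex of projective $H$-modules having $M$ as a syzygy. Since $k$ is a field, $-\otimes_k N$ is exact, so $\mathbf{P}\otimes N$ is again an acyclic complex of $H$-modules with $M\otimes N$ as a corresponding syzygy; by Lemma~\ref{lem:fact}(2) each term is $H$-projective. I would then reduce the problem to verifying that $\mathbf{P}\otimes N$ is totally acyclic, i.e., that $\Hom_H(\mathbf{P}\otimes N, Q) \cong \Hom_H(\mathbf{P}, \Hom_k(N, Q))$ via the adjunction of Lemma~\ref{lem:fact}(1) is acyclic for every projective $H$-module $Q$.

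The key auxiliary claim I would prove first is that $\Hom_k(P, Q)$ is an injective $H$-module whenever $P$ and $Q$ are projective: for $P = H$ this follows from the chain $\Hom_H(-, \Hom_k(H, Q)) \cong \Hom_H(-\otimes H, Q) \cong \Hom_k(-, Q)$, using that $Y\otimes H$ is a free $H$-module of rank $\dim_k Y$; the general case follows by additivity. Consequently, since $\pd_H(\Lambda\otimes N) \leq \pd_H(\Lambda) =: d$ by Lemma~\ref{lem:pd}, dualizing a length-$d$ projective resolution of $\Lambda\otimes N$ by the exact functor $\Hom_k(-, Q)$ yields a finite injective resolution of $\Hom_k(\Lambda\otimes N, Q)$, whence $\id_H(\Hom_k(\Lambda\otimes N, Q)) \leq d$. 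Invoking the standard fact that the Tate group $\widehat{\Ext}^{i}_H(G, X)$ vanishes for every Gorenstein projective $G$ and every $X$ of finite injective dimension (proved by the usual double-complex argument applied to an injective resolution of $X$), I conclude that $\Hom_H(\mathbf{P}, \Hom_k(\Lambda\otimes N, Q))$ is acyclic.

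Next, I would pass from $\Lambda\otimes N$ back to $N$ by tensoring the short exact sequence $0 \to k \to \Lambda \to C \to 0$ (with $C := \Lambda/\iota(k)$) by $N$ over $k$ and dualizing by $\Hom_k(-, Q)$. Applying $\Hom_H(\mathbf{P}, -)$ to the resulting exact sequence of $H$-modules produces a short exact sequence of complexes whose middle term is acyclic, and the long exact sequence of cohomology yields inductively
\[
H^{i}(\Hom_H(\mathbf{P}, \Hom_k(N, Q))) \;\cong\; H^{i+n}(\Hom_H(\mathbf{P}, \Hom_k(C^{\otimes n}\otimes N, Q)))
\]
for each $n \geq 0$. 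The main obstacle is to force this cohomology to vanish for $n$ sufficiently large; I expect this to be achieved by assembling the above short exact sequences into the $k$-split cobar-type complex $0 \to k \to \Lambda \to \Lambda\otimes \bar{\Lambda} \to \cdots$, whose terms all have projective dimension $\leq d$ by Lemma~\ref{lem:pd}, and running a spectral-sequence argument to extract a uniform injective-dimension bound for $\Hom_k(C^{\otimes n}\otimes N, Q)$.

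For part (2), the bijective antipode hypothesis grants the symmetric adjunction $\Hom_H(M\otimes N, Q) \cong \Hom_H(N, \Hom_k(M, Q))$ of Lemma~\ref{lem:fact'}(1) together with the projectivity statement Lemma~\ref{lem:fact'}(2). Interchanging the roles of $M$ and $N$ throughout the above argument, with $N$ now Gorenstein projective and tensoring taken against $M$ on the left, the same strategy applies symmetrically.
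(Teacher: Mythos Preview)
Your setup and the auxiliary claim that $\Hom_k(P,Q)$ is $H$-injective for projective $P,Q$ are both correct, and they do give $\id_H\bigl(\Hom_k(\Lambda\otimes N,Q)\bigr)\le d$, hence acyclicity of $\Hom_H(\mathbf{P},\Hom_k(\Lambda\otimes N,Q))$. The gap is in the final descent from $\Lambda\otimes N$ to $N$. The dimension-shifting isomorphism you write is valid, but your proposed termination mechanism does not work: there is no uniform (or even finite) bound on $\id_H\bigl(\Hom_k(C^{\otimes n}\otimes N,Q)\bigr)$, because $C^{\otimes n}$ need not have finite projective dimension---already $C^{\otimes 0}=k$ typically has $\pd_H(k)=\infty$. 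The cobar-type complex you invoke resolves $k$, not the syzygies $C^{\otimes n}$, so it carries no finiteness information about them, and a spectral-sequence argument from it will not manufacture the bound you want.

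The paper bypasses the entire dimension-shifting manoeuvre with one observation: the map $\iota^*\colon\Hom_k(\Lambda,Q)\to Q$ is an $H$-linear surjection which \emph{splits} because $Q$ is projective, so $Q$ is an $H$-direct summand of $\Hom_k(\Lambda,Q)$. Thus $\Hom_H(\mathbf{P}\otimes N,Q)$ is a direct summand of $\Hom_H(\mathbf{P}\otimes N,\Hom_k(\Lambda,Q))\cong\Hom_H(\Lambda\otimes\mathbf{P}\otimes N,Q)$, and the paper shows the latter is acyclic by proving that the complex $\Lambda\otimes\mathbf{P}\otimes N$ is contractible (its syzygies $\Lambda\otimes Z_i(\mathbf{P})\otimes N$ have $\pd\le d$ by Lemma~\ref{lem:pd} and sit as $d$-th syzygies in an acyclic complex of projectives, hence are themselves projective). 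In your own framework the same summand trick finishes the job immediately: once you know $\id_H\bigl(\Hom_k(\Lambda,Q)\bigr)\le d$ (your argument with $N=k$), you get acyclicity of $\Hom_H(\mathbf{P}\otimes N,\Hom_k(\Lambda,Q))$ for free, and $\Hom_H(\mathbf{P}\otimes N,Q)$ is a summand of it. For part~(2) the same correction applies verbatim with the roles swapped via Lemma~\ref{lem:fact'}.
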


\begin{proof}
(1) Since $M$ is a Gorenstein projective $H$-module, there is a totally acyclic complex of projective $H$-modules ${\bf P}$ such that $M$ is a syzygy of	${\bf P}$. Then, by Lemma \ref{lem:fact}(2) ${\bf P}\otimes N$ is an acyclic complex of projective $H$-modules such that $M\otimes  N$ is a syzygy of ${\bf P}\otimes  N$. For the complex ${\bf P}\otimes  N$, we have 
$Z_i({\bf P}\otimes  N)= Z_i({\bf P})\otimes  N$ for each $i\in \mathbb{Z}$,
	where $Z_i$ denote the $i$-th kernel of the complex. Assume
	${\rm pd}_H(\Lambda) = n$ is finite. By Lemma \ref{lem:pd}, one has ${\rm pd}_H(\Lambda\otimes Z_j({\bf P})\otimes N)\leq n$ for each $j\in \mathbb{Z}$, and it follows from
	\[0\rightarrow \Lambda\otimes Z_i({\bf P})\otimes  N\rightarrow
	\Lambda\otimes P_i \otimes N \rightarrow \cdots \rightarrow \Lambda\otimes P_{i-n+2} \otimes N \rightarrow	\Lambda\otimes Z_{i-n+1}({\bf P})\otimes  N\rightarrow 0\]
	that $\Lambda\otimes Z_i({\bf P})\otimes  N$ is a projective module
	for each $i\in \mathbb{Z}$. Hence, the complex $\Lambda\otimes {\bf P}\otimes N$ is contractible.
	
	Let $Q$ be any projective $H$-module. Then
	we have an epimorphism $\iota^*: {\rm Hom}_k(\Lambda, Q)\rightarrow Q$
	which is split since $Q$ is projective. In view of Lemma \ref{lem:fact}, there is an isomorphism
	\[{\rm Hom}_H({\bf P}\otimes  N, {\rm Hom}_k(\Lambda, Q))\cong
	{\rm Hom}_H(\Lambda\otimes{\bf P}\otimes  N, Q). \]
The contractibility of $\Lambda\otimes {\bf P}\otimes N$ implies the acyclicity of ${\rm Hom}_H({\bf P}\otimes N, {\rm Hom}_k(\Lambda, Q))$. Consequently, it follows that ${\rm Hom}_H({\bf P}\otimes N, Q)$ is also acyclic, which means that ${\bf P}\otimes  N$ is a totally acyclic complex. Hence, $M\otimes  N$ is a Gorenstein projective $H$-module, as required.
 
 (2) Assume $N$ is Gorenstein projective and ${\bf T}$ is a totally acyclic complex of projective $H$-modules such that $N$ is a syzygy of ${\bf T}$. Since ${\rm pd}_H(\Lambda)$ is finite, in view of Lemmas  \ref{lem:fact'} and \ref{lem:pd}, an argument analogous to the above will show that $M\otimes {\bf T}$ is a complex of projective modules, and the complex $\Lambda\otimes M\otimes {\bf T}$ is contractible. Since the antipode is bijective, in view of Lemma \ref{lem:fact'}, for any projective $H$-module $Q$ there is an isomorphism
	\[{\rm Hom}_H(M\otimes {\bf T}, {\rm Hom}_k(\Lambda, Q))\cong
	{\rm Hom}_H(\Lambda\otimes M \otimes{\bf T}, Q). \]
Hence, $M\otimes {\bf T}$ is a totally acyclic complex, and $M\otimes  N$ is a Gorenstein projective $H$-module.
\end{proof}

\begin{prop}\label{prop:inequ}
	Assume that there is an $H$-monomorphism $\iota: {_\varepsilon}k\rightarrow \Lambda$
	with ${\rm pd}_H(\Lambda) <\infty$. If the antipode of $H$ is bijective, then for any $H$-module $M$, we have ${\rm Gpd}_H(M)\leq {\rm pd}_H(\Lambda)$.
\end{prop}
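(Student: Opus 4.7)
The plan is to construct, for the given $H$-module $M$, a totally acyclic complex of projectives $\mathbf{P}$ having $K := \Omega^n M$ as a syzygy, where $n = \pd_H(\Lambda)$. Fix a projective resolution $P_\bullet \to M$, so that $K$ is the $n$-th kernel; the truncation $\cdots \to P_{n+1} \to P_n \to K \to 0$ will provide the left half of $\mathbf{P}$. For the right half I would build a projective coresolution of $K$ by iterating the short exact sequence $0 \to k \to \Lambda \to C \to 0$ with $C := \coker(\iota)$. Applying $(K \otimes C^{\otimes j}) \otimes -$ to this sequence yields
\[ 0 \to K \otimes C^{\otimes j} \to K \otimes C^{\otimes j} \otimes \Lambda \to K \otimes C^{\otimes j+1} \to 0, \]
and splicing these sequences produces $0 \to K \to K \otimes \Lambda \to K \otimes C \otimes \Lambda \to K \otimes C^{\otimes 2} \otimes \Lambda \to \cdots$. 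Each middle term $K \otimes C^{\otimes j} \otimes \Lambda$ is projective: by Lemma \ref{lem:pd}, $\pd_H(M \otimes C^{\otimes j} \otimes \Lambda) \leq \pd_H(\Lambda) = n$, and tensoring $P_\bullet$ with $C^{\otimes j} \otimes \Lambda$ (using Lemma \ref{lem:fact}(2) for projectivity of terms) yields a projective resolution whose $n$-th syzygy is precisely $K \otimes C^{\otimes j} \otimes \Lambda$. Splicing the two halves at $K$ assembles the bi-infinite acyclic complex $\mathbf{P}$ of projectives.

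The main step is to verify that $\mathbf{P}$ is totally acyclic. I would adapt the strategy of Lemma \ref{lem:Gproj} and show first that $\Lambda \otimes \mathbf{P}$ is contractible; this reduces to checking that each cycle $Z$ of $\mathbf{P}$ satisfies $\Lambda \otimes Z$ projective, since then $\Lambda \otimes \mathbf{P}$ is an acyclic complex of projectives with projective cycles and thus splits. The cycles fall into two shapes. On the left, $Z_j = \Omega^{n+j} M$ for $j \geq 0$; using the projective resolution $\Lambda \otimes P_\bullet \to \Lambda \otimes M$ (each term projective by Lemma \ref{lem:fact'}(2) and the bijective antipode), one identifies $\Lambda \otimes Z_j$ with $\Omega^{n+j}(\Lambda \otimes M)$, which is projective because $\pd_H(\Lambda \otimes M) \leq n$. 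On the right, $Z_{-j} = K \otimes C^{\otimes j}$ for $j \geq 1$; an analogous calculation with $\Lambda \otimes P_\bullet \otimes C^{\otimes j} \to \Lambda \otimes M \otimes C^{\otimes j}$ identifies $\Lambda \otimes K \otimes C^{\otimes j}$ with $\Omega^n(\Lambda \otimes M \otimes C^{\otimes j})$, again projective by Lemma \ref{lem:pd}.

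Once $\Lambda \otimes \mathbf{P}$ is known to be contractible, the adjunction in Lemma \ref{lem:fact'}(1) supplies, for every projective $H$-module $Q$, a natural isomorphism of complexes $\Hom_H(\mathbf{P}, \Hom_k(\Lambda, Q)) \cong \Hom_H(\Lambda \otimes \mathbf{P}, Q)$, whose right-hand side is contractible and therefore acyclic. The monomorphism $\iota$ induces an $H$-epimorphism $\iota^*\colon \Hom_k(\Lambda, Q) \to Q$ that splits because $Q$ is projective, so $\Hom_H(\mathbf{P}, Q)$ is a direct summand of $\Hom_H(\mathbf{P}, \Hom_k(\Lambda, Q))$ and is therefore also acyclic. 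Thus $\mathbf{P}$ is totally acyclic with $K$ as a syzygy, showing that $K$ is Gorenstein projective and $\Gpd_H(M) \leq n$.

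The delicate point I expect to require care is the bookkeeping of tensor positions: constructing the coresolution needs $\Lambda$ on the right of the relevant tensor, while the contractibility argument needs $\Lambda$ on the left, and each position appeals to a different half of Lemma \ref{lem:pd} paired with the corresponding piece of Lemma \ref{lem:fact} or Lemma \ref{lem:fact'}. The bijective antipode hypothesis is precisely what makes both halves available simultaneously and is therefore used in an essential way.
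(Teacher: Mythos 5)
Your proof is correct and follows essentially the same route as the paper's: truncate a projective resolution at the $n$-th syzygy, splice on a coresolution built from iterated tensor powers of $\mathrm{Coker}\,\iota$ with projective middle terms, and deduce total acyclicity from the contractibility of the $\Lambda$-tensored complex via the $\Hom$-tensor adjunction and the split surjection $\Hom_k(\Lambda,Q)\to Q$. The only (harmless) deviations are positional: you put $\Lambda$ at the right end of the coresolution terms and tensor by $\Lambda$ on the left in the contractibility step (hence invoke Lemma \ref{lem:fact'}(1)), whereas the paper does the reverse and uses Lemma \ref{lem:fact}(1); both variants use the bijective antipode in the same essential way.
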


\begin{proof}
	Let ${\rm pd}_H(\Lambda) =n$. Since  the antipode of $H$ is bijective, if follows from Lemma \ref{lem:pd} that ${\rm pd}_H(M\otimes \Lambda)\leq n$. Consider the exact sequence of $H$-modules
	\[0\longrightarrow N\longrightarrow P_{n-1}\longrightarrow\cdots\longrightarrow P_1
	\longrightarrow P_0\longrightarrow M\longrightarrow 0\]
	where $P_i$ are projective modules. Applying $-\otimes \Lambda$ to this exact sequence shows that $N\otimes \Lambda$ is a projective $H$-module, because each $P_i\otimes \Lambda$ is projective.
	
	Now we consider the exact sequence of $H$-modules
	$0\rightarrow {_\varepsilon}k\stackrel{\iota}\rightarrow \Lambda\rightarrow \Gamma\rightarrow 0$,
	where $\Gamma = {\rm Coker}\,\iota$. By applying
	$-\otimes_{k}\Gamma^{\otimes i}$ and $N\otimes  -$ to this exact sequence successively, we have a series of exact sequences
	\[0\longrightarrow N\otimes \Gamma^{\otimes i}\longrightarrow N\otimes  \Lambda\otimes \Gamma^{\otimes i}
	\longrightarrow N\otimes \Gamma^{\otimes i+1}\longrightarrow 0,\]
	where $i\geq 0$ are integers with the convention that $\Gamma^{\otimes 0}= {_\varepsilon}k$.
	Let $P^i=N\otimes  \Lambda\otimes \Gamma^{\otimes i}$. The projectivity of  $N\otimes  \Lambda$ implies that each $P^i$ is a projective module. We splice these sequences together to obtain a
	long exact sequence of $H$-modules
	$0\rightarrow N\rightarrow P^0\rightarrow P^1\rightarrow\cdots$.
	By pasting this sequence with the projective resolution
	$\cdots\rightarrow P_1\rightarrow P_0\rightarrow N\rightarrow 0$ of $N$, we get an acyclic complex ${\bf P}$ of projective $H$-modules, such that
	$N$ is a syzygy of ${\bf P}$.
	
	Since ${\rm pd}_H(\Lambda) =n$ is finite, analogous to the above argument, it follows that $Z_i({\bf P})\otimes \Lambda$ is a projective $H$-module
	for each $i\in \mathbb{Z}$, and the complex ${\bf P}\otimes  \Lambda$ is contractible. Then, for any projective module $Q$, the isomorphism
	\[{\rm Hom}_H({\bf P}, {\rm Hom}_k(\Lambda, Q))\cong
	{\rm Hom}_H({\bf P}\otimes \Lambda, Q) \]
	implies that the complex ${\rm Hom}_H({\bf P}, {\rm Hom}_k(\Lambda, Q))$
	is acyclic. Note that $Q$ is a direct summand of ${\rm Hom}_k(\Lambda, Q)$. Hence, ${\rm Hom}_H({\bf P}, Q)$ is acyclic, i.e.
	${\bf P}$ is a totally acyclic complex, and moreover, $N$ is a Gorenstein projective $H$-module. This completes the proof.
\end{proof}

It is observed by Lorenz and Lorenz  \cite[Section 2.4]{LL} that the global dimension of a Hopf algebra coincides with the projective dimension of its trivial module. In analogy with this, we establish the corresponding equality for Gorenstein global dimension, which follows from Lemma \ref{lem:monic} and Proposition \ref{prop:inequ}. Consequently, the trivial $H$-module $k$ serves as a test module for the Gorenstein projective dimension of all $H$-modules. For the analogous result in the context of group algebras, see \cite[Theorem 1.7]{ET1}.

\begin{thm}\label{thm:fGpd}
	Let $H$ be a Hopf algebra over a field $k$ with a bijective antipode. The following are equivalent:
	\begin{enumerate}
	\item ${\rm Gpd}_H({_\varepsilon}k)<\infty$.
	\item There is an $H$-monomorphism $\iota: {_\varepsilon}k\rightarrow \Lambda$
	with ${\rm pd}_H(\Lambda)<\infty$.
 \item For any $H$-module $M$, ${\rm Gpd}_H(M)<\infty$.
 \end{enumerate}
\noindent Consequently, we have the equality ${\rm l.Ggldim}(H)={\rm Gpd}_H({_\varepsilon}k)$, and moreover, ${\rm Gpd}_H({_\varepsilon}k)={\rm pd}_H(\Lambda)$ if either of them is finite.
\end{thm}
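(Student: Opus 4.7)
The plan is to assemble the theorem directly from the three preceding results. Specifically, Lemma \ref{lem:monic} supplies the implication (1) $\Rightarrow$ (2), since it produces an $H$-monomorphism $\iota:{_\varepsilon}k\to \Lambda$ with ${\rm pd}_H(\Lambda)={\rm Gpd}_H({_\varepsilon}k)$ whenever the right side is finite. The implication (2) $\Rightarrow$ (3) is precisely the content of Proposition \ref{prop:inequ}: given such $\iota$, every $H$-module $M$ satisfies ${\rm Gpd}_H(M)\leq {\rm pd}_H(\Lambda)<\infty$. Finally, (3) $\Rightarrow$ (1) is trivial by specializing to $M={_\varepsilon}k$. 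Thus the three conditions are equivalent.

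For the equality ${\rm l.Ggldim}(H)={\rm Gpd}_H({_\varepsilon}k)$, the inequality $\geq$ is immediate from the definition of the Gorenstein global dimension as a supremum over all $H$-modules. For $\leq$, if ${\rm Gpd}_H({_\varepsilon}k)=\infty$ there is nothing to prove; otherwise, set $n={\rm Gpd}_H({_\varepsilon}k)$, invoke Lemma \ref{lem:monic} to obtain $\Lambda$ with ${\rm pd}_H(\Lambda)=n$, and then apply Proposition \ref{prop:inequ} to conclude ${\rm Gpd}_H(M)\leq n$ for every $H$-module $M$, so that ${\rm l.Ggldim}(H)\leq n$.

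For the final equality ${\rm Gpd}_H({_\varepsilon}k)={\rm pd}_H(\Lambda)$, I interpret the claim as asserting that $\Lambda$ in (2) can be chosen so that equality holds, whenever either quantity is finite. If ${\rm Gpd}_H({_\varepsilon}k)$ is finite, Lemma \ref{lem:monic} directly produces such a $\Lambda$. Conversely, if some $\Lambda$ from (2) has ${\rm pd}_H(\Lambda)<\infty$, then Proposition \ref{prop:inequ} applied to $M={_\varepsilon}k$ already gives ${\rm Gpd}_H({_\varepsilon}k)\leq {\rm pd}_H(\Lambda)<\infty$, so ${\rm Gpd}_H({_\varepsilon}k)$ is finite and Lemma \ref{lem:monic} again supplies a $\Lambda$ realizing the equality.

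I do not anticipate any genuine obstacle: all the nontrivial work — constructing an embedding into a module of finite projective dimension out of a Gorenstein projective resolution of $k$, and then using such an embedding together with the tensor-preservation of projectivity from Lemmas \ref{lem:fact} and \ref{lem:fact'} to bound ${\rm Gpd}_H(M)$ uniformly — has been carried out in Lemma \ref{lem:monic} and Proposition \ref{prop:inequ}. The theorem is purely an assembly statement; the only care needed is to handle the ``infinite'' case separately in the inequality chain so that the stated equalities are read correctly.
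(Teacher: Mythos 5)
Your proposal is correct and follows exactly the paper's route: the theorem is stated as a direct consequence of Lemma \ref{lem:monic} ((1)$\Rightarrow$(2)) and Proposition \ref{prop:inequ} ((2)$\Rightarrow$(3)), with (3)$\Rightarrow$(1) trivial, and your assembly of the two equalities is the intended one. (As a minor refinement, the final equality in fact holds for \emph{any} $\Lambda$ as in (2), since $\mathrm{pd}_H(\Lambda)=\mathrm{Gpd}_H(\Lambda)\leq \mathrm{l.Ggldim}(H)=\mathrm{Gpd}_H({}_{\varepsilon}k)\leq \mathrm{pd}_H(\Lambda)$, so no choice of $\Lambda$ is needed.)
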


Analogous to the above arguments, we can characterize the finiteness of ${\rm Gpd}_H(k{_\varepsilon})$, and prove the equality ${\rm r.Ggldim}(H)={\rm Gpd}_H(k{_\varepsilon})$. An algebra with finite (left) Gorenstein global dimension is called a (left) Gorenstein algebra; here we do not assume the algebra is noetherian.
\vspace{0.1in}

\noindent In \cite{LZ}, Liu and Zhang proved that an artinian Hopf algebra over a field is finite dimensional; this gives an affirmative answer to a question raised by Bergen. It is possible to further extend this result.

\begin{prop}\label{prop:f.d.H}
	Let $H$ be a Hopf algebra over a field $k$ with bijective antipode. The following
	conditions are equivalent:
	\begin{enumerate}
	    \item $H$ is finite dimensional.
     \item $H$ is left (or right) artinian.
     \item ${\rm Gpd}_H({_\varepsilon}k)= 0$, or ${\rm Gpd}_H(k{_\varepsilon})=0$.
     \item As a left (or right) $H$-module, $k$ is a submodule of a projective module.
	\end{enumerate}
\end{prop}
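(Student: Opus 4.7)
The plan is to establish (1) $\Leftrightarrow$ (2) separately, and then close the cycle (1) $\Rightarrow$ (3) $\Rightarrow$ (4) $\Rightarrow$ (1); the left-or-right variants in (2), (3), and (4) follow by symmetry, applying the argument to $H^{\op}$ (which remains a Hopf algebra since the antipode is bijective) when needed. The equivalence (1) $\Leftrightarrow$ (2) is handled immediately: one direction is trivial, and the other is precisely the theorem of Liu and Zhang cited in the introduction.

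For (1) $\Rightarrow$ (3), I would invoke the Larson--Sweedler theorem: every finite-dimensional Hopf algebra is Frobenius, and hence quasi-Frobenius. Over a QF ring, projective and injective modules coincide, so a projective resolution of any module, spliced with an injective (=projective) coresolution, yields a complete resolution $\mathbf{P}$ for which $\Hom(-,Q)$ remains acyclic for every projective $Q$ (because $\Ext^{i}(-,Q)=0$ for $i\geq 1$ when $Q$ is injective). Therefore every $H$-module, and in particular ${_\varepsilon}k$, is Gorenstein projective, giving $\Gpd_{H}({_\varepsilon}k)=0$. The step (3) $\Rightarrow$ (4) is then immediate from the definition of Gorenstein projectivity: ${_\varepsilon}k$ fits into a short exact sequence $0\to{_\varepsilon}k\to P\to N\to 0$ with $P$ projective. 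The converse (4) $\Rightarrow$ (3) also drops out of Theorem \ref{thm:fGpd}, since $\pd_{H}(P)=0$ combined with the equality $\Gpd_{H}({_\varepsilon}k)=\pd_{H}(\Lambda)$ there forces $\Gpd_{H}({_\varepsilon}k)=0$.

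The main step is (4) $\Rightarrow$ (1). Given an embedding $\iota:{_\varepsilon}k\hookrightarrow P$ into a projective $H$-module, I would embed $P$ as a direct summand of a free module $H^{(I)}$ to obtain an $H$-linear injection ${_\varepsilon}k\hookrightarrow H^{(I)}$. The image of $1\in k$ is a finitely supported tuple $(h_{i})_{i\in I}$, and $H$-linearity together with the trivial $H$-action on $k$ forces $hh_{i}=\varepsilon(h)h_{i}$ for every $h\in H$ and every index $i$. Any nonzero component $t:=h_{i}$ is therefore a nonzero left integral of $H$, i.e.\ $ht=\varepsilon(h)t$ for all $h\in H$. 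By the classical theorem of Sweedler (see, e.g., Montgomery's or Radford's book on Hopf algebras), a Hopf algebra possessing a nonzero left integral must be finite-dimensional, which gives (1). The principal obstacle is precisely this final implication, which we would invoke as a black box: it rests on the classical but non-trivial fact that the space of left integrals inside $H$ is nonzero only when $H$ is finite-dimensional.
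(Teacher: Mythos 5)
Your proposal is correct, and for the one implication that carries real content it takes a genuinely different route from the paper. The paper closes the cycle as $(1)\Rightarrow(3)\Rightarrow(2)\Rightarrow(1)$: from ${\rm Gpd}_H({}_\varepsilon k)=0$ it deduces ${\rm l.Ggldim}(H)=0$ via Theorem \ref{thm:fGpd}, invokes the fact that an algebra of Gorenstein global dimension zero is quasi-Frobenius (hence artinian), and only then applies Liu--Zhang to pass from artinian to finite dimensional; the equivalence $(3)\Leftrightarrow(4)$ is read off from Theorem \ref{thm:fGpd} exactly as you do (your appeal to ``the equality ${\rm Gpd}_H({}_\varepsilon k)={\rm pd}_H(\Lambda)$'' is really the inequality ${\rm Gpd}_H({}_\varepsilon k)\le {\rm pd}_H(P)=0$ of Proposition \ref{prop:inequ}, but that is exactly what is available and what is needed). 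You instead prove $(4)\Rightarrow(1)$ directly: an $H$-linear embedding ${}_\varepsilon k\hookrightarrow P\hookrightarrow H^{(I)}$ sends $1$ to a nonzero finitely supported invariant tuple, any nonzero component of which is a nonzero left integral in $H$, and Sweedler's classical theorem then forces $\dim_k H<\infty$. This is a valid and rather cleaner argument for that implication: it bypasses both the quasi-Frobenius characterization of Gorenstein global dimension zero and the Liu--Zhang theorem (which you, like the paper, still use for $(2)\Rightarrow(1)$, where it is unavoidable), at the cost of importing the nontrivial but standard fact that integrals in $H$ exist only in the finite-dimensional case. The paper's route, by contrast, keeps the argument entirely inside Gorenstein homological algebra and yields the artinian condition $(2)$ as an intermediate station rather than as a separate equivalence. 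Both proofs are complete; yours is arguably more self-contained from the Hopf-algebraic side and exposes the role of condition $(4)$ as literally the existence of an integral.
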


\begin{proof}
	The implication (1)$\Rightarrow$(3) holds because a finite dimensional Hopf algebra is Frobenius, and over such an algebra every module is Gorenstein projective. It is immediate from \cite{LZ} that (2)$\Rightarrow$(1). The equivalence of (3) and (4) is clear from Theorem \ref{thm:fGpd}. Now assume ${\rm Gpd}_H({_\varepsilon}k)= 0$. Then Theorem \ref{thm:fGpd} yields ${\rm l.Ggldim}(H)=0$. It is well-known that an algebra whose left (right) Gorenstein global dimension is zero is quasi-Frobenius (see e.g. \cite[Theorem 3.2]{CET}), and any quasi-Frobenius algebra is artinian and self-injective on both sides. Hence, (3)$\Rightarrow$(2) holds. This completes the proof.
\end{proof}

The global dimension of an arbitrary algebra may be asymmetric,  that is, its left and right global dimensions need not coincide. However, for any Hopf algebra $H$ the following equalities hold (see e.g. \cite[Appendix]{WYZ}):
\[{\rm l.gldim}(H)= {\rm pd}_H({_\varepsilon}k)={\rm pd}_{H^e}(H)= {\rm r.gldim}(H)= {\rm pd}_H(k_\varepsilon).\]
For a Hopf algebra, we will show that its left and right Gorenstein global dimensions coincide if the antipode is bijective.

We recall the following result, which states that a left adjoint functor satisfying certain conditions preserves the Gorenstein projective dimension; cf. \cite[Proposition 3.7]{CR}. 

\begin{lem}\label{adj-lem}
  Let $\mathcal{A}$ and $\mathcal{B}$ be abelian categories with enough projective objects. Consider an adjoint pair $F: \mathcal{A} \rightleftarrows \mathcal{B}: G$ consisting of exact functors.
  If $F$ is faithful and that the class of modules $\mathsf{add}\,G(\mathsf{Proj}(\mathcal{B}))$ coincides with $\mathsf{Proj}(\mathcal{A})$, then we have ${\rm Gpd}_{\mathcal{A}}(X) = {\rm Gpd}_{\mathcal{B}}(F(X))$ for each object $X \in \mathcal{A}$.
\end{lem}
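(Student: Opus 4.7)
The strategy is to prove both inequalities separately, using as the main technical ingredient the natural isomorphism
\[
\Ext^i_{\mathcal{A}}(M, G(Q)) \;\cong\; \Ext^i_{\mathcal{B}}(F(M), Q), \qquad i \geq 0,
\]
for $M \in \mathcal{A}$, $Q \in \mathcal{B}$. This will follow from computing $\Ext$ via a projective resolution $P_\bullet \to M$ in $\mathcal{A}$, whose image $F(P_\bullet) \to F(M)$ is again a projective resolution because $F$ is exact and (since $G$ is exact) preserves projectives, combined with the adjunction isomorphism $\Hom_{\mathcal{B}}(F(-), -) \cong \Hom_{\mathcal{A}}(-, G(-))$.

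For the inequality $\Gpd_{\mathcal{B}}(F(X)) \leq \Gpd_{\mathcal{A}}(X)$, I first verify that $F$ sends Gorenstein projectives to Gorenstein projectives. Given a totally acyclic complex $\mathbf{P}$ of projectives in $\mathcal{A}$ with $X$ a syzygy, the complex $F(\mathbf{P})$ is acyclic and consists of projectives; and for any projective $Q \in \mathcal{B}$ the adjunction identifies $\Hom_{\mathcal{B}}(F(\mathbf{P}), Q) \cong \Hom_{\mathcal{A}}(\mathbf{P}, G(Q))$, which is acyclic because $G(Q)$ is projective in $\mathcal{A}$ (by the hypothesis $\mathsf{add}\,G(\mathsf{Proj}(\mathcal{B})) = \mathsf{Proj}(\mathcal{A})$) and $\mathbf{P}$ is totally acyclic. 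Applying $F$ to a Gorenstein projective resolution of $X$ of length $\Gpd_{\mathcal{A}}(X)$ then produces one for $F(X)$.

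For the reverse inequality, set $n := \Gpd_{\mathcal{B}}(F(X))$ and let $K$ denote the $n$-th syzygy of a projective resolution of $X$ in $\mathcal{A}$. Since $F$ is exact and preserves projectives, $F(K)$ is the $n$-th syzygy of $F(X)$, and hence is Gorenstein projective by the standard syzygy characterisation. The task therefore reduces to showing that if $F(K)$ is Gorenstein projective then so is $K$. The Ext-isomorphism above, combined with the hypothesis that every projective of $\mathcal{A}$ is a direct summand of some $G(Q)$, immediately yields $\Ext^{\geq 1}_{\mathcal{A}}(K, P) = 0$ for every projective $P \in \mathcal{A}$. This provides the left half of a complete projective resolution of $K$ via the projective resolution already chosen.

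The main obstacle is to construct the right half: a coresolution $0 \to K \to P^0 \to P^1 \to \cdots$ by projectives in $\mathcal{A}$ which remains exact after applying $\Hom_{\mathcal{A}}(-, P')$ for every projective $P'$. The plan is to transport a right coresolution $0 \to F(K) \to Q^0 \to Q^1 \to \cdots$ of $F(K)$ back to $\mathcal{A}$ via $G$, obtaining an exact complex of projectives in $\mathcal{A}$ starting at $GF(K)$; the unit $\eta_K \colon K \hookrightarrow GF(K)$ is a monomorphism because $F$ is faithful, which allows $K$ to be spliced into the sequence. The delicate point, which I expect to be the hardest, is to modify the splice (using the established $\Ext$-vanishing to solve the requisite lifting problems) so that $K$ itself becomes the initial term while the projectives $P^i$ remain projective in $\mathcal{A}$. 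Because $F$ is faithfully exact, it reflects exactness and total acyclicity, so the required acyclicity of $\Hom_{\mathcal{A}}(-, P')$ in $\mathcal{A}$ can be verified by applying $F$ and checking total acyclicity in $\mathcal{B}$, where it holds by construction. The resulting totally acyclic complex of projectives in $\mathcal{A}$ with $K$ as a syzygy shows that $K$ is Gorenstein projective, and combining both inequalities yields the equality $\Gpd_{\mathcal{A}}(X) = \Gpd_{\mathcal{B}}(F(X))$.
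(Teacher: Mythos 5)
Your first inequality is complete and correct, and your reduction of the reverse inequality to the claim ``$F(K)$ Gorenstein projective $\Rightarrow K$ Gorenstein projective'', together with the vanishing $\Ext^{\geq 1}_{\mathcal{A}}(K,P)=0$ for projective $P$, is exactly right. (Note that the paper itself does not prove this lemma; it quotes it from Chen--Ren, so the comparison here is with what a complete proof requires.) The genuine gap is the construction of the right half of the complete resolution, which you explicitly defer (``the delicate point \dots is to modify the splice''). The naive splice fails because $\ker\big(G(Q^0)\to G(Q^1)\big)=GF(K)$ rather than $K$, and the Ext-vanishing you have established (for maps out of $K$ into projectives of $\mathcal{A}$) does not by itself solve the relevant lifting problems, which live in $\mathcal{B}$. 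Your closing remark that ``$F$ reflects total acyclicity'' is also not a usable statement: $\Hom_{\mathcal{A}}(\mathbf{T},P')$ is a complex of abelian groups, so one cannot apply $F$ to it; the correct mechanism is that $\Hom_{\mathcal{A}}(\mathbf{T},P')$ is a direct summand of $\Hom_{\mathcal{A}}(\mathbf{T},G(Q'))\cong\Hom_{\mathcal{B}}(F(\mathbf{T}),Q')$ by the hypothesis $\mathsf{add}\,G(\mathsf{Proj}(\mathcal{B}))=\mathsf{Proj}(\mathcal{A})$ --- and to use this one must first know that $F(\mathbf{T})$ is totally acyclic, i.e.\ that all of its syzygies are Gorenstein projective, which is precisely what your splice does not yet deliver.

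Here is the missing step. For any $M\in\mathcal{A}$ with $F(M)$ Gorenstein projective, choose an exact sequence $0\to F(M)\xrightarrow{\alpha} Q\to N\to 0$ in $\mathcal{B}$ with $Q$ projective and $N$ Gorenstein projective, and embed $M$ into the projective object $G(Q)$ via $u=G(\alpha)\circ\eta_M$ (a monomorphism since $F$ is faithful and $G$ is exact); set $M'=\coker(u)$. Exactness of $F$ gives $0\to F(M)\xrightarrow{F(u)} FG(Q)\to F(M')\to 0$ with $FG(Q)$ projective, and a standard pushout argument shows that $F(M')$ is Gorenstein projective as soon as every map $g\colon F(M)\to Q'$ into a projective extends along $F(u)$. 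This is where the adjunction does real work: extend $g$ to $g'\colon Q\to Q'$ along $\alpha$ (possible because $\Ext^1_{\mathcal{B}}(N,Q')=0$) and put $h=\epsilon_{Q'}\circ FG(g')$; then
\[ h\circ F(u)=\epsilon_{Q'}\circ FG(g)\circ F(\eta_M)=g\circ\epsilon_{F(M)}\circ F(\eta_M)=g \]
by naturality of the counit and the triangle identity. Iterating produces $0\to K\to P^0\to P^1\to\cdots$ with each $P^i$ projective in $\mathcal{A}$ and each cosyzygy $K^{(i)}$ satisfying $F(K^{(i)})$ Gorenstein projective, hence $\Ext^{\geq 1}_{\mathcal{A}}(K^{(i)},\mathsf{Proj}(\mathcal{A}))=0$ by your Ext-isomorphism; this vanishing, applied degree by degree, yields the $\Hom_{\mathcal{A}}(-,\mathrm{proj})$-exactness of the spliced complex. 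Without this construction the reverse inequality, and hence the lemma, is not proved.
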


The canonical algebra map $\varphi: H \to H^e, h \mapsto \sum h_1 \otimes Sh_2$ induces a functor $G: {_H\!\M_H} \to {_H\!\M}$, which has a left adjoint $F = - \otimes H: {_H\!\M} \to {_H\!\M_H}$.
For any left $H$-module $N$, the $H$-$H$-bimodule structure on $F(N) = N \otimes H$ is given by $h'(n \otimes h)h'' = \sum h'_1 \cdot n \otimes h'_2 h h''$.
Note that $H$ is a direct summand of $H^e$ as an $H$-module, as $(\id_H \otimes \varepsilon) \varphi = \id_H$.

\begin{prop}\label{prop:equ-1}
Let $H$ be a Hopf algebra over a field $k$.
For any left $H$-module $M$, we have ${\rm Gpd}_H(M) = {\rm Gpd}_{H^e}(M \otimes H)$.
\end{prop}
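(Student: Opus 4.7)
The plan is to invoke Lemma \ref{adj-lem} for the adjunction $F \dashv G$ described above, with $\mathcal{A} = {_H\M}$ and $\mathcal{B} = {_H\M_H}$; once the three hypotheses of that lemma are in place, the asserted equality $\Gpd_H(M) = \Gpd_{H^e}(F(M)) = \Gpd_{H^e}(M \otimes H)$ is immediate. Exactness of both functors is automatic: $G$ is restriction of scalars along $\varphi$, and $F(N) = N \otimes_k H$ is a tensor product over a field. Faithfulness of $F$ is equally transparent, since a nonzero morphism $f: N \to N'$ in ${_H\M}$ satisfies $(f \otimes \id_H)(n \otimes 1) = f(n) \otimes 1 \neq 0$ whenever $f(n) \neq 0$.

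The substantive step is the identity
\[ \add G\bigl(\Proj({}_{H^e}\M)\bigr) = \Proj({_H\M}). \]
Since every projective $H^e$-module is a direct summand of a sum of copies of $H^e$ and $G$ preserves sums and summands, this reduces to $\add G(H^e) = \add H$. One inclusion is precisely the observation recorded just before the proposition: the identity $(\id_H \otimes \varepsilon)\varphi = \id_H$ exhibits $\varphi$ as a left $H$-linear section of the $H$-module map $\id_H \otimes \varepsilon: G(H^e) \to H$, so $H$ is a direct summand of $G(H^e)$ and hence $\add H \subseteq \add G(H^e)$. The reverse inclusion amounts to showing that $G(H^e)$ is a projective---in fact free---left $H$-module.

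To establish this, observe that $G(H^e)$ is the vector space $H \otimes H$ endowed with the action $h \cdot (a \otimes b) = \sum h_1 a \otimes b S(h_2)$. Viewing the second tensorand as a left $H$-module $V$ via $h \cdot b := b S(h)$ (well-defined because $S$ is an algebra antihomomorphism), this is exactly the diagonal left $H$-action on $H \otimes V$. The standard Hopf-algebraic isomorphism
\[ \psi: H \otimes V \longrightarrow H \otimes V, \qquad h \otimes v \longmapsto \sum h_1 \otimes h_2 \cdot v, \]
with inverse $h \otimes v \mapsto \sum h_1 \otimes S(h_2) \cdot v$, intertwines the action on the first tensorand alone (manifestly free on the $k$-basis of $V$) with the diagonal action; $H$-linearity of $\psi$ and the verification that the two maps are mutually inverse are immediate Sweedler computations using the antipode axioms $\sum h_1 S(h_2) = \varepsilon(h) = \sum S(h_1) h_2$. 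Consequently $G(H^e)$ is free as a left $H$-module, which completes the verification and hence the proof. The main technical hurdle is precisely this last identification: the twist by $S$ on the second factor obstructs any naive decomposition of $G(H^e)$, and the Hopf-algebraic isomorphism $\psi$ is what exposes the free structure.
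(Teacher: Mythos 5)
Your proposal is correct and follows exactly the paper's route: both apply Lemma \ref{adj-lem} to the adjoint pair $F = - \otimes H \dashv G$ (restriction along $\varphi: h \mapsto \sum h_1 \otimes Sh_2$), checking exactness, faithfulness of $F$, and the identity $\add G(\Proj({_H\M_H})) = \Proj({_H\M})$ via the two facts that $H$ is a summand of $G(H^e)$ and that $G(H^e)$ is projective (indeed free) over $H$. The only difference is that you spell out the untwisting isomorphism $\psi$ exhibiting the freeness of $G(H^e)$, which the paper leaves implicit as a standard consequence of Lemma \ref{lem:fact}(2); your verification is accurate.
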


\begin{proof}
It suffices to examine that the adjoint pair of functors $F: {_H\!\M} \rightleftarrows {_H\!\M_H}: G$ satisfies the conditions of Lemma \ref{adj-lem}.
It is evident that the functor $F$ is exact and faithful, while the functor $G$ is exact.
The equality $\mathsf{add} (G \mathsf{Proj}({_H\!\M_H})) = \mathsf{Proj}({_H\!\M})$ follows directly from two key facts:
$H^e$ is a projective $H$-module;
$H$ is direct summand of $H^e$ as an $H$-module.
\end{proof}

Note that $k \otimes H \cong H$ as $H^e$-modules.
Specifying the above to the regular $H$-bimodule $H$, we have ${\rm Gpd}_H({_\varepsilon}k) = {\rm Gpd}_{H^e}(H)$.
Combining the preceding equality with Theorem \ref{thm:fGpd} and its dual formulation for right $H$-modules, we obtain the following result.

\begin{cor}\label{cor:equ-2}
Let $H$ be a Hopf algebra over a field $k$ with a bijective antipode. Then
   \[{\rm l.Ggldim}(H)= {\rm Gpd}_H({_\varepsilon}k) ={\rm Gpd}_{H^e}(H)= {\rm r.Ggldim}(H)= {\rm Gpd}_H(k_\varepsilon).\]
\end{cor}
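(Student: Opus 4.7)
The plan is to assemble the five equal quantities in two parallel strands — one for left modules, one for right modules — and then fuse them through the common invariant $\Gpd_{H^e}(H)$.

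First I would record the left-hand equalities. Theorem~\ref{thm:fGpd} directly gives ${\rm l.Ggldim}(H) = \Gpd_H({_\varepsilon}k)$. Proposition~\ref{prop:equ-1}, combined with the $H^e$-module isomorphism $k \otimes H \cong H$ (induced by the multiplication, with left action via $\varphi$), yields $\Gpd_H({_\varepsilon}k) = \Gpd_{H^e}(k \otimes H) = \Gpd_{H^e}(H)$; the author has already noted this specialization just after Proposition~\ref{prop:equ-1}. This establishes the first three equalities.

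Next I would pass to $H^{op}$ and reapply everything on that side. Since the antipode $S$ of $H$ is bijective, the opposite algebra $H^{op}$ carries a Hopf algebra structure with antipode $S^{-1}$ (again bijective), and a right $H$-module is the same as a left $H^{op}$-module, so $\Gpd_H(k_\varepsilon) = \Gpd_{H^{op}}({}_\varepsilon k)$ and ${\rm r.Ggldim}(H) = {\rm l.Ggldim}(H^{op})$. The right-module analogue of Theorem~\ref{thm:fGpd}, alluded to in the paragraph just after Theorem~\ref{thm:fGpd}, then gives ${\rm r.Ggldim}(H) = \Gpd_H(k_\varepsilon)$, supplying the fourth equality. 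For the bridge, I would apply Proposition~\ref{prop:equ-1} with $H$ replaced by $H^{op}$: this produces $\Gpd_{H^{op}}({}_\varepsilon k) = \Gpd_{(H^{op})^e}(H^{op})$. The flip $a \otimes b \mapsto b \otimes a$ identifies $(H^{op})^e = H^{op} \otimes H$ with $H \otimes H^{op} = H^e$ as $k$-algebras, and under this identification the regular bimodule $H^{op}$ corresponds to the regular bimodule $H$; hence $\Gpd_{(H^{op})^e}(H^{op}) = \Gpd_{H^e}(H)$.

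Stringing the four steps together:
\[
{\rm l.Ggldim}(H) \stackrel{\text{Thm}\;\ref{thm:fGpd}}{=} \Gpd_H({_\varepsilon}k) \stackrel{\text{Prop}\;\ref{prop:equ-1}}{=} \Gpd_{H^e}(H) \stackrel{\text{flip}}{=} \Gpd_{(H^{op})^e}(H^{op}) \stackrel{\text{Prop}\;\ref{prop:equ-1}\;\text{for}\;H^{op}}{=} \Gpd_{H^{op}}({}_\varepsilon k) \stackrel{\text{Thm}\;\ref{thm:fGpd}\;\text{for}\;H^{op}}{=} {\rm r.Ggldim}(H),
\]
and the chain closes on the right via $\Gpd_{H^{op}}({}_\varepsilon k) = \Gpd_H(k_\varepsilon)$. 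The only point requiring any care — and the step I expect to be the mild obstacle — is verifying that the flip isomorphism $(H^{op})^e \cong H^e$ transports the regular bimodule $H^{op}$ to the regular bimodule $H$, and hence preserves Gorenstein projective dimension; once this is written out, the argument is essentially formal.
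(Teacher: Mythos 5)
Your proof is correct and follows essentially the same route as the paper: Theorem \ref{thm:fGpd} together with the specialization of Proposition \ref{prop:equ-1} (via $k \otimes H \cong H$ as $H^e$-modules) gives the left-hand equalities, and the right-hand ones come from the dual, right-module formulations, which you obtain rigorously by passing to the Hopf algebra $H^{op}$ (with bijective antipode $S^{-1}$) and identifying $(H^{op})^e \cong H^e$ by the flip, which carries the regular bimodule $H^{op}$ to the regular bimodule $H$. The paper merely invokes ``the dual formulation for right $H$-modules,'' and your $H^{op}$/flip argument is the standard way of making that step precise, so there is no substantive difference in approach.
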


\begin{rk}
(1) Note that any Hopf algebra $H$ naturally forms an $H$-$H$-bi-Galois object.
By applying Lemma \ref{adj-lem}, we further obtain a generalization of Proposition \ref{prop:equ-1} to faithfully flat Hopf Galois extensions, as stated in Proposition \ref{Gpd-L-RB-prop}.


(2) Note that ${\rm pd}_{H^e}(H)$ coincides with the Hochschild cohomological dimension 
${\rm Hdim}(H)$ of $H$, which is defined by
\[{\rm Hdim}(A) = \sup \{ n \mid {\rm HH}^n(H, M) \neq 0 \text{ for some } H\text{-bimodule } M \}. \]
In analogy with the Hochschild cohomological dimension, for any algebra $A$ we define the generalized Hochschild cohomolgical dimension
\[\underline{{\rm Hdim}}(A) = \sup \{ n \mid {\rm HH}^n(A, P) \neq 0 \text{ for some projective } A\text{-bimodule } P \}. \]
By \cite[Theorem 2.20]{Hol} we have $\underline{{\rm Hdim}}(H)\leq {\rm Gpd}_{H^e}(H)$; moreover, if ${\rm Gpd}_{H^e}(H)$ is finite, then the equality ${\rm Gpd}_{H^e}(H)=\underline{{\rm Hdim}}(H)$ holds. However, this equality may fail when ${\rm Gpd}_{H^e}(H)$ is infinite.

\end{rk}



In connection with the existence of complete cohomological functors in the category of modules, Gedrich and Gruenberg \cite{GG} introduced two invariant for any ring $R$: the supremum of the injective lengths of projective left $R$-modules ${\rm l.silp}(R)$, and the supremum of the projective lengths of injective left R-modules ${\rm l.spli}(R)$. In the same way, we may consider right $R$-modules and define the right invariants ${\rm r.silp}(R)$ and ${\rm r.spli}(R)$.

Gedrich and Gruenberg \cite{GG} noted that the relation between ${\rm l.silp}(R)$ and ${\rm l.spli}(R)$ is unclear for a general ring $R$; if both invariants are finite, it is easily seen that ${\rm l.silp}(R) = {\rm l.spli}(R)$, cf. \cite[1.6]{GG}.
In the special case where $R$ is an Artin algebra, the equality  ${\rm l.silp}(R) = {\rm l.spli}(R)$ is equivalent to Gorenstein Symmetry Conjecture, a long-standing conjecture in representation theory.  

\begin{lem}(\cite[Theorem 2.4]{GG})\label{silp=<spli-Hopf-lem}
	Let $H$ be a Hopf algebra over a field $k$.
	Then ${\rm l.silp}(H) \leq {\rm l.spli}(H)$.
\end{lem}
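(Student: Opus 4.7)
The plan is to reduce the inequality to Theorem \ref{thm:fGpd} via a canonical embedding of the trivial module into an injective of small projective dimension, and then to deduce the bound on injective dimensions of projectives by one round of dimension shifting using Gorenstein projectivity of high syzygies.

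I may assume $n := {\rm l.spli}(H) < \infty$, otherwise the inequality is vacuous. First I would exhibit the coregular module $\omega := {\rm Hom}_k(H,k)$ as an injective $H$-module containing ${_\varepsilon}k$: with the standard left action $(h\cdot f)(x) = f(xh)$, this is a classical injective cogenerator of ${_H\!\M}$, and the counit furnishes an $H$-linear monomorphism $\iota: {_\varepsilon}k \hookrightarrow \omega$, $1\mapsto \varepsilon$ (the $H$-linearity being immediate from $(h\cdot\varepsilon)(x) = \varepsilon(xh) = \varepsilon(h)\varepsilon(x)$). Since $\omega$ is injective, ${\rm pd}_H(\omega) \leq n$ by definition of $n$. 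Applying Theorem \ref{thm:fGpd} with $\Lambda := \omega$ verifies condition (2) and yields
\[ {\rm l.Ggldim}(H) = {\rm Gpd}_H({_\varepsilon}k) \leq {\rm pd}_H(\omega) \leq n. \]

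Finally, I would convert this bound on the Gorenstein global dimension into the desired bound on ${\rm l.silp}(H)$ by standard dimension shifting. For any projective $H$-module $P$ and any $H$-module $M$, take a projective resolution of $M$ and let $K_n$ denote its $n$-th syzygy. Since ${\rm Gpd}_H(M) \leq {\rm l.Ggldim}(H) \leq n$, the module $K_n$ is Gorenstein projective; in particular ${\rm Ext}^i_H(K_n, P) = 0$ for all $i\geq 1$, because the totally acyclic complex defining $K_n$ remains acyclic after applying ${\rm Hom}_H(-,P)$. A dimension shift then gives ${\rm Ext}^{n+1}_H(M, P) \cong {\rm Ext}^1_H(K_n, P) = 0$, and since $M$ is arbitrary, ${\rm id}_H(P) \leq n$, so ${\rm l.silp}(H) \leq n$. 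The only non-routine step is selecting $\omega$ as the right $\Lambda$ to feed into Theorem \ref{thm:fGpd}; everything else is formal.
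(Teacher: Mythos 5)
Your argument is correct, but it is worth noting that the paper offers no proof of this lemma at all: it is simply quoted from Gedrich--Gruenberg \cite[Theorem 2.4]{GG}. What you give is therefore a genuinely self-contained alternative, and a nice one. The two non-routine points both check out: the coregular module $\omega=\Hom_k(H,k)$ with $(h\cdot f)(x)=f(xh)$ is indeed injective (it is $\Hom_k(H_H,k)$, right adjoint to restriction along $k\to H$ in the relevant sense, so $\Hom_H(-,\omega)\cong\Hom_k(-,k)$ is exact), and $h\cdot\varepsilon=\varepsilon(h)\varepsilon$ makes $1\mapsto\varepsilon$ an $H$-linear embedding of ${}_\varepsilon k$; feeding $\Lambda=\omega$, which has $\pd_H(\omega)\le {\rm l.spli}(H)$ by definition, into Theorem \ref{thm:fGpd} (really Proposition \ref{prop:inequ}) gives ${\rm l.Ggldim}(H)\le {\rm l.spli}(H)$. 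Your final dimension shift is also fine: the $n$-th syzygy of any module of Gorenstein projective dimension $\le n$ is Gorenstein projective by \cite[Theorem 2.20]{Hol}, and $\Ext^{\ge 1}_H(G,P)=0$ for $G$ Gorenstein projective and $P$ projective, so every projective has injective dimension $\le n$; in effect you are reproving the inequality ${\rm l.silp}(R)\le{\rm l.Ggldim}(R)$ that the paper later quotes from \cite[Theorem 4.1]{E2}. Two small remarks. First, your route passes through Proposition \ref{prop:inequ} and hence through Lemma \ref{lem:pd}(2) and Lemma \ref{lem:fact'}, so it genuinely uses the bijectivity of the antipode; the lemma as stated (and as cited from \cite{GG}) does not mention this hypothesis, though it is the paper's standing assumption and holds in every subsequent application, so nothing is lost. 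Second, your approach buys more than the cited statement: as a by-product it shows that ${\rm l.spli}(H)<\infty$ already forces ${\rm l.Ggldim}(H)\le{\rm l.spli}(H)$, which is stronger than the bare inequality ${\rm l.silp}(H)\le{\rm l.spli}(H)$.
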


Recall that a ring $R$ is left $\aleph_0$-noetherian if every left ideal of $R$ is countably generated. In particular, a $k$-algebra generated by a countably (or finitely) generated linear space is always $\aleph_0$-noetherian.

\begin{prop}\label{silp=spli-prop}
	Let $H$ be a Hopf algebra over a field $k$ with a bijective antipode.
	If $H$ is $\aleph_0$-noetherian, then ${\rm l.silp}(H) = {\rm l.spli}(H)$.
\end{prop}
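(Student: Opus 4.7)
The plan is to establish ${\rm l.spli}(H) \leq {\rm l.silp}(H)$, since the reverse inequality is already provided by Lemma \ref{silp=<spli-Hopf-lem}. If ${\rm l.silp}(H) = \infty$ there is nothing to prove, so I set $n := {\rm l.silp}(H) < \infty$ and aim to show that every injective left $H$-module $I$ satisfies ${\rm pd}_H(I) \leq n$.

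The central mechanism is a dimension-shifting argument. I would take a projective resolution $\cdots \to P_1 \to P_0 \to I \to 0$ of $I$ and denote by $K$ its $n$-th syzygy; then ${\rm pd}_H(I) \leq n$ is equivalent to the projectivity of $K$. By hypothesis, every projective, hence every free, left $H$-module $F$ satisfies ${\rm id}_H(F) \leq n$, so the dimension-shift isomorphism ${\rm Ext}^{j}_H(K, F) \cong {\rm Ext}^{j+n}_H(I, F)$, combined with the vanishing of the right-hand side for $j \geq 1$, yields ${\rm Ext}^{\geq 1}_H(K, F) = 0$ for every free $F$.

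The remaining step is to upgrade this Ext-vanishing against free modules to Ext-vanishing against arbitrary modules, which would force $K$ to be projective. This is precisely where the $\aleph_0$-noetherian hypothesis must be exploited. It should give a Papp-type conclusion that countable direct sums of injective left $H$-modules remain injective, and more broadly it permits countable filtration and directed-colimit arguments. Following the strategy developed by Emmanouil and Talelli in the group-algebra setting \cite{ET1}, I would filter an arbitrary $H$-module $M$ by a continuous chain of countably generated submodules whose successive quotients can be controlled by free modules, and combine the established vanishing ${\rm Ext}^{\geq 1}_H(K, F) = 0$ with an Eklof-type lemma adapted to this setting in order to conclude ${\rm Ext}^1_H(K, M) = 0$ for every $M$.

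The main obstacle is precisely this propagation step, bridging Ext-vanishing on free modules to Ext-vanishing on arbitrary modules. The $\aleph_0$-noetherian hypothesis is indispensable here, as it replaces transfinite filtration arguments by countable ones and ensures compatibility of Ext with the relevant colimits; without it, the whole argument collapses. Once the propagation is secured, $K$ is projective, hence ${\rm pd}_H(I) \leq n$, and therefore ${\rm l.spli}(H) \leq n = {\rm l.silp}(H)$, completing the proof.
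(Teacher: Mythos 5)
Your first half is fine and matches the paper: the inequality ${\rm l.silp}(H)\leq {\rm l.spli}(H)$ is taken from Lemma \ref{silp=<spli-Hopf-lem}, and the dimension shift showing that the $n$-th syzygy $K$ of an injective module $I$ satisfies ${\rm Ext}^{\geq 1}_H(K,F)=0$ for every free $F$ is correct. The gap is the ``propagation'' step, and it is not a repairable technicality but the whole content of the hard direction. Vanishing of ${\rm Ext}^{\geq 1}_H(K,-)$ on free (equivalently projective) modules does not force $K$ to be projective: over any self-injective algebra (e.g.\ a quasi-Frobenius Hopf algebra such as $k[x]/(x^2)$ in characteristic $2$) \emph{every} module satisfies this vanishing, yet most modules are not projective. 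The implication would hold if you already knew ${\rm pd}_H(K)<\infty$, but that is equivalent to ${\rm pd}_H(I)<\infty$, i.e.\ to the finiteness of ${\rm l.spli}(H)$ that you are trying to prove — so the argument as sketched is circular. Eklof-type filtration lemmas cannot rescue it, because they control the \emph{first} variable of ${\rm Ext}$ (closure of ${}^{\perp}\mathcal{C}$ under transfinite extensions), whereas you need vanishing in the \emph{second} variable against arbitrary modules; and the ``Papp-type'' statement is also off — for an $\aleph_0$-noetherian ring countable direct sums of injectives need not be injective, only of injective dimension at most $1$.

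A symptom of the same problem is that your proof never uses the bijectivity of the antipode. The paper's proof of this proposition is essentially a citation: the bijective antipode gives $H\cong H^{op}$, and then ${\rm l.spli}(H)\leq {\rm l.silp}(H)$ is \cite[Corollary 3.7]{E1}, whose proof needs both the $\aleph_0$-noetherian hypothesis and the isomorphism with the opposite ring. Concretely, from ${\rm l.silp}(H)=n<\infty$ one gets ${\rm id}(_HH)\leq n$, hence via $H\cong H^{op}$ also ${\rm id}(H_H)\leq n$; this bounds the \emph{flat} dimension of injective left modules, the $\aleph_0$-noetherian condition (Jensen-type arguments on countably presented modules) upgrades finite flat dimension to finite projective dimension, and finally finiteness of both invariants forces equality by Gedrich--Gruenberg \cite[1.6]{GG}. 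Some right-hand-side input of this kind is indispensable: for a general ring the relation ${\rm l.spli}\leq{\rm l.silp}$ is exactly the point that \cite{GG} describe as unclear, so a purely left-sided dimension shift plus filtrations cannot close it.
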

\begin{proof}
	Since the antipode $S$ is bijective, it yields an isomorphism $H\cong H^{op}$.
	By \cite[Corollary 3.7]{E1} we have ${\rm l.spli}(H) \leq {\rm l.silp}(H)$.
	Together with Lemma \ref{silp=<spli-Hopf-lem}, the result follows.
\end{proof}

It is well-known that ${\rm l.Ggldim}(R) = \max\{ {\rm l.silp}(R), {\rm l.spli}(R) \}$; see e.g. \cite[Theorem 4.1]{E2}.
Moreover, if $R$ is a noetherian ring, then ${\rm l.silp}(R)$ equals to ${\rm id}_R(R)$, the injective dimension of the left regular $R$-module $R$.

\begin{cor}\label{silp=spli-noe-Hopf-cor}
Let $H$ be a noetherian Hopf algebra over a field $k$ with a bijective antipode.
	Then ${\rm l.Ggldim}(H) = {\rm l.silp}(H) = {\rm l.spli}(H) = {\rm id}_H(H)$.
\end{cor}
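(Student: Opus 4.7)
The plan is to assemble this corollary directly from the two invariants already discussed in the paragraphs preceding the statement, together with Proposition \ref{silp=spli-prop}. The argument should be short, since all the heavy lifting has been done.

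First I would observe that every noetherian ring is $\aleph_0$-noetherian, because each left ideal is in fact finitely generated. Hence Proposition \ref{silp=spli-prop} applies to $H$ and yields the equality $\mathrm{l.silp}(H) = \mathrm{l.spli}(H)$. This takes care of the middle equality in the asserted chain.

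Next I would invoke the general identity $\mathrm{l.Ggldim}(R) = \max\{\mathrm{l.silp}(R), \mathrm{l.spli}(R)\}$ recalled just above the statement (from \cite[Theorem 4.1]{E2}) and apply it to $R = H$. Since the two silp/spli invariants of $H$ have just been shown to agree, the maximum coincides with either of them, giving $\mathrm{l.Ggldim}(H) = \mathrm{l.silp}(H) = \mathrm{l.spli}(H)$.

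For the final equality, I would use the noetherian hypothesis on $H$ together with the remark just before the corollary that for any noetherian ring the invariant $\mathrm{l.silp}$ coincides with $\mathrm{id}_R(R)$ (this reflects the standard fact that, over a noetherian ring, direct sums of injectives are injective, so the injective length of any projective module is bounded by that of the free module, hence by $\mathrm{id}_R(R)$; conversely $\mathrm{id}_R(R) \leq \mathrm{l.silp}(R)$ is trivial). Applying this to $H$ gives $\mathrm{l.silp}(H) = \mathrm{id}_H(H)$ and completes the chain of equalities. There is no real obstacle here: the corollary is a formal consequence of Proposition \ref{silp=spli-prop} and the two cited general facts, with the bijectivity of the antipode being used only insofar as it was needed to prove Proposition \ref{silp=spli-prop}.
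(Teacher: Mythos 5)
Your argument is correct and is essentially identical to the paper's proof: the paper also deduces the corollary from the identity ${\rm l.Ggldim}(R)=\max\{{\rm l.silp}(R),{\rm l.spli}(R)\}$, the fact that ${\rm l.silp}(R)={\rm id}_R(R)$ for noetherian $R$, and Proposition \ref{silp=spli-prop} applied via noetherian $\Rightarrow$ $\aleph_0$-noetherian. Your write-up merely spells out these standard ingredients slightly more explicitly.
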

\begin{proof}
Building on the preceding argument, it suffices to show that ${\rm l.silp}(H) = {\rm l.spli}(H)$. Proposition \ref{silp=spli-prop} directly yields this result, thereby establishing the desired equality.
\end{proof}

Emmanouil \cite{E1} proved the equality ${\rm l.spli}(\mathbb{Z}G) = {\rm l.silp}(\mathbb{Z}G)$ for any group $G$. We extend this result to pointed Hopf algebras by applying the following observation, implicit in the proof of \cite[Proposition 4.2]{E1}.

\begin{lem}\label{spli=<silp-Hopf-lem}
	Let $R$ be a ring.
	Suppose that the following conditions hold.
	\begin{enumerate}
		\item $R$ is a filtered colimit $\varinjlim R_{\lambda}$ of some subalgebras $\{ R_{\lambda} \}_{\lambda \in \Lambda}$ of $R$.
		\item $R_{\lambda}$ is isomorphic to its opposite ring $R_\lambda^{op}$ for any $\lambda \in \Lambda$.
		\item $R_{\lambda}$ is a $\aleph_0$-noetherian ring.
		\item $R$ is flat as a right $R_{\lambda}$-module, and $R_{\lambda}$ is a direct summand of $R$ as a left $R_{\lambda}$-module.
	\end{enumerate}
	Then ${\rm l.spli}(R) \leq {\rm l.silp}(R)$.
\end{lem}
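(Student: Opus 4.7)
Set $n := {\rm l.silp}(R)$ and assume $n<\infty$, since the inequality is vacuous otherwise. The plan is to push the silp bound down to each subalgebra $R_\lambda$, invoke the $\aleph_0$-noetherian machinery locally to swap to a spli bound, and finally lift this back up to $R$.

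For the first step, fix a projective left $R_\lambda$-module $P$. Flatness of $R$ as a right $R_\lambda$-module (from (4)) makes $R\otimes_{R_\lambda}-$ exact and sends projectives to projectives, so $\id_R(R\otimes_{R_\lambda}P)\leq n$; the right adjoint --- restriction along $R_\lambda\hookrightarrow R$ --- therefore preserves injectives, so $\id_{R_\lambda}(R\otimes_{R_\lambda}P)\leq n$ as well. The direct-summand hypothesis of (4) makes $P$ an $R_\lambda$-summand of $R\otimes_{R_\lambda}P$, giving $\id_{R_\lambda}(P)\leq n$. Hence ${\rm l.silp}(R_\lambda)\leq n$.

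Next, by (2) and (3) each $R_\lambda$ satisfies the hypotheses of \cite[Corollary 3.7]{E1}, which yields ${\rm l.spli}(R_\lambda)\leq {\rm l.silp}(R_\lambda)\leq n$.

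For the globalization, let $I$ be any injective left $R$-module. The same adjunction argument shows $I|_{R_\lambda}$ is an injective $R_\lambda$-module, so $\pd_{R_\lambda}(I|_{R_\lambda})\leq n$ uniformly in $\lambda$. The main obstacle is then to upgrade these local bounds to $\pd_R(I)\leq n$. Following the strategy implicit in the proof of \cite[Proposition 4.2]{E1}, one takes an $R$-projective resolution of $I$, lets $K$ denote its $n$-th syzygy, and verifies projectivity of $K$ over $R$ by testing $\Ext^1_R(K,-)=0$ through the filtered presentation $R=\varinjlim R_\lambda$, using both halves of (4): flatness so that syzygy formation commutes with restriction to each $R_\lambda$, and the summand condition so that compatible local splittings can be assembled into an $R$-linear splitting. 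The delicate point --- and the principal obstacle --- is that the restriction of an $R$-projective module to $R_\lambda$ is not in general $R_\lambda$-projective, so one cannot simply descend an $R$-resolution to $R_\lambda$; condition (4) is precisely what is required to bridge this gap.
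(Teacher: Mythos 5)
The paper itself offers no written proof of this lemma---it is presented as an observation ``implicit in the proof of \cite[Proposition 4.2]{E1}''---so your proposal must be judged on its own. Your local analysis is sound: the induction/restriction adjunction together with hypothesis (4) gives ${\rm l.silp}(R_\lambda)\leq {\rm l.silp}(R)=n$, hypotheses (2) and (3) let you invoke \cite[Corollary 3.7]{E1} to get ${\rm l.spli}(R_\lambda)\leq n$, and restriction of an injective $R$-module $I$ is $R_\lambda$-injective, whence ${\rm pd}_{R_\lambda}(I)\leq n$ for every $\lambda$. (A small caveat in the first step: to realize $P$ as an $R_\lambda$-direct summand of $R\otimes_{R_\lambda}P$ you need the retraction $r\otimes p\mapsto \sigma(r)p$ to be well defined over $\otimes_{R_\lambda}$, which requires the splitting $\sigma\colon R\to R_\lambda$ to be right $R_\lambda$-linear as well; this is harmless in the intended application but is not literally what (4) states.)

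The genuine gap is in your globalization step. Your plan---take the $n$-th syzygy $K$ of an $R$-projective resolution of $I$ and prove $K$ is $R$-projective by ``assembling compatible local splittings''---cannot succeed, because a module that is projective over every $R_\lambda$ need not be projective over $R$: passing through a filtered colimit of subrings genuinely loses one in projective dimension. For instance, with $G=\mathbb{Z}[1/p]=\bigcup_n p^{-n}\mathbb{Z}$ and $R=\mathbb{Z}G$, $R_n=\mathbb{Z}[p^{-n}\mathbb{Z}]$, all four hypotheses hold and the trivial module has ${\rm pd}_{R_n}=1$ for every $n$ but ${\rm pd}_R=2$; so the first syzygy is locally projective yet not projective. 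The argument that actually works (and is what is implicit in \cite[Proposition 4.2]{E1}) concedes this loss and then recovers it by a finiteness trick: from ${\rm pd}_{R_\lambda}(I)\leq n$ for all $\lambda$ one deduces only ${\rm pd}_R(I)\leq n+1$, using ${\rm pd}_R(R\otimes_{R_\lambda}I)\leq {\rm pd}_{R_\lambda}(I)$ (right-flatness of $R$ over $R_\lambda$) and the standard ``filtered colimit raises projective dimension by at most one'' bound applied to $I\cong\varinjlim_\lambda R\otimes_{R_\lambda}I$. This yields ${\rm l.spli}(R)\leq n+1<\infty$, and since ${\rm l.silp}(R)$ and ${\rm l.spli}(R)$ are now both finite, Gedrich--Gruenberg's observation \cite[1.6]{GG} forces them to be equal, giving ${\rm l.spli}(R)\leq {\rm l.silp}(R)$. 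Your proposal never invokes this finiteness-implies-equality step, which is the essential idea that removes the unavoidable ``$+1$''.
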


\begin{prop}
	Let $H$ be a Hopf algebra over a field $k$ with a bijective antipode.
	Suppose that $H$ is pointed, or the coradical of $H$ is countably-dimensional.
	Then ${\rm l.silp}(H) = {\rm l.spli}(H)$.
\end{prop}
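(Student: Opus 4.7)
The plan is to reduce the claim to an application of Lemma \ref{spli=<silp-Hopf-lem} in order to obtain ${\rm l.spli}(H) \leq {\rm l.silp}(H)$; combined with Lemma \ref{silp=<spli-Hopf-lem}, which already gives the reverse inequality unconditionally, this yields the required equality. To use Lemma \ref{spli=<silp-Hopf-lem} I must exhibit $H$ as a filtered colimit $H=\varinjlim H_{\lambda}$ of Hopf subalgebras $H_{\lambda}$ satisfying its four hypotheses.

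First I would construct the family $\{H_{\lambda}\}$. In the pointed case, any countable subset $X\subseteq H$ is contained in a Hopf subalgebra $H_{X}$ of countable $k$-dimension (close $X$ iteratively under multiplication, comultiplication, and both $S$ and $S^{-1}$; each closure step stays countable); this $H_{X}$ is a subcoalgebra of a pointed coalgebra, hence pointed. The family $\{H_{X}\}$, directed by inclusion over countable subsets, clearly presents $H$ as a filtered colimit. In the case where the coradical $H_{0}$ is countably-dimensional, I adopt the same construction but enlarge each $H_{X}$ to contain $H_{0}$; the resulting subalgebras remain countably-dimensional and still exhaust $H$.

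Next I verify the four hypotheses of Lemma \ref{spli=<silp-Hopf-lem}. Condition~(1) holds by construction. For condition~(2), the antipode $S$ of $H$, being bijective and an anti-algebra map that stabilizes each $H_{\lambda}$, restricts to an isomorphism $H_{\lambda}\cong H_{\lambda}^{op}$. For condition~(3), a $k$-algebra of countable $k$-dimension is $\aleph_{0}$-noetherian because any left ideal, viewed as a $k$-subspace, has countable dimension and hence is countably generated as a left module. For condition~(4), in the pointed case I invoke Radford's freeness theorem: a pointed Hopf algebra is free as a module, on either side, over any pointed Hopf subalgebra; freeness immediately delivers right-module flatness of $H$ over $H_{\lambda}$ and, since $1\in H_{\lambda}$ extends to a basis of $H$ as a left $H_{\lambda}$-module, the splitting of $H_{\lambda}\hookrightarrow H$ as left $H_{\lambda}$-modules. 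In the case where $H_{0}$ is countably-dimensional, each $H_{\lambda}$ contains $H_{0}$, and I appeal to the corresponding faithful flatness/freeness result for Hopf subalgebras containing the coradical (available under the bijective-antipode hypothesis), which again yields both the flatness and the direct summand property.

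Having checked all four conditions, Lemma \ref{spli=<silp-Hopf-lem} gives ${\rm l.spli}(H)\leq{\rm l.silp}(H)$, and the result follows. The main obstacle is condition~(4): the pointed case is handled cleanly by Radford's classical freeness theorem, whereas the coradical-countably-dimensional case rests on a less elementary faithful flatness statement for Hopf subalgebras, which must be invoked carefully to ensure both sided module-theoretic conclusions are available.
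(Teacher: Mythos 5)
Your proposal follows essentially the same route as the paper: combine the unconditional inequality ${\rm l.silp}(H)\leq{\rm l.spli}(H)$ with the Emmanouil-type criterion for ${\rm l.spli}\leq{\rm l.silp}$, realizing $H$ as a filtered colimit of countably-dimensional Hopf subalgebras closed under $S^{\pm 1}$ (containing $H_0$ in the second case), and verifying the four hypotheses exactly as the paper does, with Radford's freeness theorem in the pointed case and Radford's faithful flatness result over subalgebras containing the coradical in the other case.

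One point needs repair: in condition (4) you justify the splitting of $H_\lambda\hookrightarrow H$ by asserting that $1\in H_\lambda$ "extends to a basis of $H$ as a left $H_\lambda$-module". Freeness of a module does not by itself imply that a prescribed element is part of a basis, so as written this step is unsupported. The paper avoids this by reducing condition (4) entirely to one-sided faithful flatness of $H$ over $H_\lambda$ via Masuoka--Wigner (see also M\"uller--Schneider), which yields both the flatness on the needed side and the direct summand property; your argument becomes complete once you route the direct-summand claim through that citation (in both the pointed and the countable-coradical cases) rather than through the basis assertion.
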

\begin{proof}
	By Lemmas \ref{silp=<spli-Hopf-lem} and \ref{spli=<silp-Hopf-lem}, it suffices to verify that $H$ satisfies the conditions in Lemma \ref{spli=<silp-Hopf-lem}.
	
	(1)
	Let $\{ V_{\lambda} \}_{\lambda \in \Lambda}$ be defined as follows: If $H$ is pointed, let $\{ V_{\lambda} \}_{\lambda \in \Lambda}$ consist of all finite dimensional subcoalgebras of $H$; if the coradical $H_0$ is countably dimensional, let $\{ V_{\lambda} \}_{\lambda \in \Lambda}$ consist of all countably dimensional subcoalgebras containing $H_0$.	For each $\lambda$, let $H_{\lambda}$ denote the subalgebra of $H$ generated by $\bigcup_{i \in \mathbb{Z}} S^i(V_{\lambda})$.
	 For any $x \in H$, there exists a finite dimensional subcoalgebra $V \subseteq H$ such that $x \in V$. Thus, $H$ is the filtered colimit $\varinjlim_{\lambda \in \Lambda} H_{\lambda}$ of the subalgebras $\{ H_{\lambda} \}_{\lambda \in \Lambda}$.
	
	(2)
	By our construction, each $H_{\lambda}$ is also a Hopf subalgebra of $H$ with a bijective antipode $S|_{H_{\lambda}}$. Hence $S|_{H_{\lambda}}$ gives a isomorphism between $H_{\lambda}$ and $H_{\lambda}^{op}$.
	Evidently, each $H_{\lambda}$ is a Hopf subalgebra of $H$ with a bijective antipode $S|_{H_{\lambda}}$.
	Consequently, $S|_{H_{\lambda}}$ induces an isomorphism $H_{\lambda}\cong H_{\lambda}^{op}$.

	(3)
	Since $H_{\lambda}$ is generated, as a $k$-algebra, by a countably dimensional linear space $\bigcup_{i \in \mathbb{Z}} S^i(V_{\lambda})$, it follows that $H_{\lambda}$ is $\aleph_0$-noetherian.
	
	(4)
	By \cite[Theorem 2.1]{MW} (see also \cite[Theorem 1.2]{MS}), it suffices to show that $H$ is faithfully flat as a left (or right) $H_{\lambda}$-module.
	If $H$ is pointed, this follows from \cite{R2}. If instead the coradical
	$H_0$ is countably dimensional, then by (ii) we have $H_0 \subseteq H_{\lambda}$,  so $H$ is faithfully flat over $H_{\lambda}$ by \cite[Corollary 2.3]{R1}.
\end{proof}


We conclude this section by examining the behavior of the Gorenstein projective dimension in the context of Hopf subalgebras and Hopf quotient algebras. In the following, let $H$ be a Hopf algebra with a bijective antipode and $K\subseteq H$ be a Hopf subalgebra.

\begin{cor}\label{cor:Gpd-subalg}
If $H$ is projective as a left $K$-module, then
	${\rm Gpd}_K({_\varepsilon}k)\leq {\rm Gpd}_H({_\varepsilon}k)$; equivalently, ${\rm l.Ggldim}(K)\leq {\rm l.Ggldim}(H)$ whenever the antipode of $K$ is bijective.
\end{cor}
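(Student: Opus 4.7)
The plan is to apply the characterization of Gorenstein projective dimension via finite projective-dimension embeddings, given by Lemma \ref{lem:monic} and Proposition \ref{prop:inequ}, and transport this embedding from $H$-modules to $K$-modules by restriction.

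In detail, I would begin by assuming ${\rm Gpd}_H({_\varepsilon}k)=n$ is finite, since otherwise the inequality is vacuous. By Lemma \ref{lem:monic} there exists an $H$-monomorphism $\iota: {_\varepsilon}k \to \Lambda$ with ${\rm pd}_H(\Lambda)=n$. The crucial observation is that the hypothesis that $H$ is projective as a left $K$-module forces every free $H$-module, being a direct sum of copies of $H$, to be projective as a left $K$-module, and hence every projective $H$-module (as a summand of a free one) restricts to a projective $K$-module. Applying this to a length-$n$ projective resolution of $\Lambda$ over $H$ and restricting to $K$ yields a length-$n$ projective resolution over $K$, so ${\rm pd}_K(\Lambda) \leq n$.

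Since $K \subseteq H$ is a Hopf subalgebra, the counit of $K$ is the restriction of the counit of $H$, so ${_\varepsilon}k$ viewed as an $H$-module restricts to the trivial $K$-module, and $\iota$ is then a $K$-monomorphism. The standing assumption that the antipode of $H$ is bijective implies that the antipode of $K$ is bijective as well: $S_H|_K$ is injective because $S_H$ is, and an elementary argument using the antipode $S_H^{-1}$ of $H^{\rm cop}$ shows $S_H(K)=K$. Therefore Proposition \ref{prop:inequ} applies to $K$ and yields
\[ {\rm Gpd}_K({_\varepsilon}k) \;\leq\; {\rm pd}_K(\Lambda) \;\leq\; n \;=\; {\rm Gpd}_H({_\varepsilon}k). \]
The equivalent formulation ${\rm l.Ggldim}(K) \leq {\rm l.Ggldim}(H)$ is then immediate from Theorem \ref{thm:fGpd} applied to both $H$ and $K$.

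The main technical point, and the only step requiring verification beyond citing the preceding theorems, is the preservation of projectivity under restriction together with the bijectivity of the antipode of $K$. Both are standard, so I do not anticipate a serious obstacle in the execution; the proof is essentially a transfer argument built on the already-developed machinery of Section 2.
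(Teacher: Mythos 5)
Your overall route is exactly the paper's: assume ${\rm Gpd}_H({_\varepsilon}k)=n<\infty$, use Lemma \ref{lem:monic} (via Theorem \ref{thm:fGpd}) to get an $H$-monomorphism ${_\varepsilon}k\to\Lambda$ with ${\rm pd}_H(\Lambda)=n$, observe that restriction along $K\subseteq H$ preserves projectivity because $H$ is $K$-projective, hence ${\rm pd}_K(\Lambda)\leq n$, and then apply Proposition \ref{prop:inequ} over $K$ to the restricted monomorphism. All of that matches the paper's argument and is fine.

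The genuine gap is your claim that bijectivity of $S_H$ forces bijectivity of $S_K$ ``by an elementary argument using the antipode $S_H^{-1}$ of $H^{\rm cop}$.'' A Hopf subalgebra is closed under $S_H$, so $S_H(K)\subseteq K$ and $S_H|_K$ is injective, but closure under $S_H^{-1}$ is \emph{not} automatic: $S_H^{-1}$ is the antipode of $H^{\rm cop}$, and saying that it preserves $K$ amounts to saying that $K^{\rm cop}$ is a Hopf subalgebra of $H^{\rm cop}$ --- which is precisely what would need proving, so the argument is circular. In general a Hopf subalgebra of a Hopf algebra with bijective antipode need not be stable under $S^{-1}$ (think of the sub-Hopf algebra generated by $\bigcup_{i\geq 0}S^i(V)$ inside a free-type construction), and the paper is careful about exactly this point: the corollary carries the proviso ``whenever the antipode of $K$ is bijective,'' the subalgebras $H_\lambda$ in the pointed/countable-coradical proposition are generated by $\bigcup_{i\in\mathbb{Z}}S^i(V_\lambda)$ to force closure under $S^{-1}$, and in Corollary \ref{cor:Gpd-quoalg} normality of $K$ is used to show $S^{-1}K\subseteq K$. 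Since Proposition \ref{prop:inequ} (through Lemma \ref{lem:pd} and Lemma \ref{lem:fact'}) genuinely uses the bijectivity of the antipode of the algebra it is applied to, your application of it to $K$ needs $S_K$ bijective as a hypothesis. The fix is simply to assume it, as the statement itself allows, rather than to derive it; with that adjustment the rest of your proof stands.
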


\begin{proof}
The desired inequality is immediate if	${\rm Gpd}_H({_\varepsilon}k)=\infty$. We may therefore assume ${\rm Gpd}_H({_\varepsilon}k)=n$ is finite. Then Theorem \ref{thm:fGpd} yields an $H$-monomorphism $\iota: {_\varepsilon}k\rightarrow \Lambda$ with ${\rm pd}_H(\Lambda)=n$. It is clear that the restriction functor from the category of $H$-modules to the category of $K$-modules preserves projective objects precisely when $H$ is projective as a left $K$-module. Under this hypothesis, restricting $\iota$ gives a $K$-monomorphism $\iota': {_\varepsilon}k\rightarrow \Lambda$ with ${\rm pd}_K(\Lambda)\leq n$. Hence ${\rm Gpd}_K({_\varepsilon}k)\leq n$, as required.
\end{proof}

The proof of the following result is inspired by methods developed for Gorenstein cohomological dimensions of groups in \cite[Proposition 2.9]{ET1} and \cite[Corollary 4.7]{ET3}, though the argument here differs in several aspects. For the notion of normal Hopf subalgebra, we refer to \cite[Definition 3.4.1]{M}.

\begin{cor}\label{cor:Gpd-quoalg}
If $K$ is a normal Hopf subalgebra of $H$ and $H$ is a faithfully flat left $K$-module, then 
\[{\rm l.Ggldim}(H) \leq {\rm l.Ggldim}(K) + {\rm l.Ggldim}(\overline{H}), \] 
where $K^+ = {\rm Ker}\, \varepsilon_K$ and $\overline{H} = H/HK^{+}$.
If, in addition, $K$ is of type ${\rm FP}_{\infty}$ (that is, $_Kk$ admits a projective resolution by finitely generated projective $K$-module) and ${\rm l.Ggldim}(\overline{H}) < \infty$, then 
\[{\rm l.Ggldim}(H) = {\rm l.Ggldim}(K) + {\rm l.Ggldim}(\overline{H}).\]
\end{cor}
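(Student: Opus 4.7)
The plan is to reduce, via Theorem \ref{thm:fGpd}, to bounding ${\rm Gpd}_H({}_\varepsilon k)$ and then to run a Gorenstein projective resolution argument across the extension $K\hookrightarrow H\twoheadrightarrow\overline{H}$. Set $p={\rm l.Ggldim}(K)$ and $q={\rm l.Ggldim}(\overline{H})$; the inequality is vacuous unless both are finite, and Theorem \ref{thm:fGpd} reduces the goal to showing ${\rm Gpd}_H({}_\varepsilon k)\leq p+q$.

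The first structural input I would record is that $H$ is projective as a left $K$-module. Since $K$ is normal and $H$ is faithfully flat over $K$, the extension $K\subseteq H$ is right $\overline{H}$-Galois, so by Theorem \ref{ffHGe-thm}(5) $H$ is $\overline{H}$-equivariantly left $K$-projective; hence restriction preserves projectives, and the exact induction functor $F=H\otimes_K-$ together with restriction fits the hypotheses of Lemma \ref{adj-lem} exactly as in Proposition \ref{prop:equ-1}. Consequently $F$ sends Gorenstein projective $K$-modules to Gorenstein projective $H$-modules and ${\rm Gpd}_H(H\otimes_K N)\leq{\rm Gpd}_K(N)$ for every $K$-module $N$; in particular ${\rm Gpd}_H(\overline{H})={\rm Gpd}_H(H\otimes_K k)\leq p$, and any projective $\overline{H}$-module pulled back along $\pi\colon H\twoheadrightarrow\overline{H}$ has Gorenstein projective dimension $\leq p$ over $H$.

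I would then choose a Gorenstein projective resolution $0\to G_q\to\cdots\to G_0\to k\to 0$ of length $q$ in $\overline{H}$-modules and pull it back along $\pi^*$ to an exact sequence of $H$-modules, with the aim of proving the key sub-claim that ${\rm Gpd}_H(\pi^*G_i)\leq p$ for each $i$. Once this is established, splicing the resolution into short exact sequences and iterating the standard bound ${\rm Gpd}_H(A)\leq\max\{{\rm Gpd}_H(B),{\rm Gpd}_H(C)-1\}$ for $0\to A\to B\to C\to 0$ yields ${\rm Gpd}_H({}_\varepsilon k)\leq q+p$ as required. This sub-claim is where I expect the main difficulty: one starts from a totally acyclic complex $\mathbf{Q}^i$ of projective $\overline{H}$-modules with $G_i$ as a syzygy, pulls back to $H$ (still exact, each term of Gorenstein projective dimension $\leq p$ over $H$ by the previous paragraph), and must then patch the length-$p$ Gorenstein projective $H$-resolutions of the individual terms into a single Gorenstein projective $H$-resolution of $\pi^*G_i$ of the same length, via a horseshoe/double-complex construction in the spirit of \cite[Proposition 2.9]{ET1} and \cite[Corollary 4.7]{ET3}.

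For the equality in the second part, the reverse inequality ${\rm l.Ggldim}(H)\geq p+q$ should follow from the Grothendieck/Hochschild--Serre spectral sequence
\[
E_2^{i,j}={\rm Ext}^i_{\overline{H}}\bigl(k,\,{\rm Ext}^j_K(k,-)\bigr)\Longrightarrow{\rm Ext}^{i+j}_H(k,-),
\]
arising from the adjunction $\pi^*\dashv(-)^K$ (the exactness of $\pi^*$ makes $(-)^K$ preserve injectives). The ${\rm FP}_\infty$ condition on $K$ ensures that each ${\rm Ext}^j_K(k,-)$ commutes with direct sums and produces $\overline{H}$-modules of controlled projective dimension, so that a non-vanishing class in ${\rm Ext}^p_K(k,L)$ for suitable $L$ with ${\rm pd}_H(L)<\infty$ survives the spectral sequence to give ${\rm Ext}^{p+q}_H(k,L)\neq 0$; combined with the characterisation of Gorenstein projective dimension via ${\rm Ext}$-vanishing against modules of finite projective dimension (cf.\ \cite[Theorem 2.20]{Hol}), this forces ${\rm Gpd}_H(k)\geq p+q$, and together with the inequality above yields the desired equality.
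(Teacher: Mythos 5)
For the upper bound your route is genuinely different from the paper's, and it has a gap at exactly the step you flag as ``the main difficulty''. The sub-claim that ${\rm Gpd}_H(\pi^*G_i)\leq p$ for a Gorenstein projective $\overline{H}$-module $G_i$ is not something a horseshoe argument on the individual terms of $\mathbf{Q}^i$ will deliver: the natural dimension-shifting argument (embedding $\pi^*G_i$ into its cosyzygies $C_j$ along the right half of $\mathbf{Q}^i$ and using $\Ext^{>p}_H(\pi^*Q^i_j,P)=0$) only terminates if one already knows that $\Ext^{\gg 0}_H(-,P)$ vanishes uniformly, i.e.\ that ${\rm l.Ggldim}(H)<\infty$ --- which is what you are trying to prove. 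So as written the argument is circular, and you would at minimum need a separate finiteness argument first. The paper sidesteps the sub-claim entirely by exploiting criterion (2) of Theorem \ref{thm:fGpd}: instead of a Gorenstein projective resolution of $k$ over $\overline{H}$, take the $\overline{H}$-monomorphism $k\to\Lambda$ with ${\rm pd}_{\overline{H}}(\Lambda)=q$ supplied by that theorem. A length-$q$ \emph{projective} resolution over $\overline{H}$ restricts to a length-$q$ resolution of $\Lambda$ by $H$-modules of ${\rm Gpd}_H\leq p$ (only the bound ${\rm Gpd}_H(\overline{H})={\rm Gpd}_K(k)=p$ for projective $\overline{H}$-modules is needed), so ${\rm Gpd}_H(\Lambda)\leq p+q$; then \cite[Proposition 1.2]{ET} gives an $H$-monomorphism $\Lambda\to\Gamma$ with ${\rm pd}_H(\Gamma)\leq p+q$, and composing with $k\to\Lambda$ and applying Theorem \ref{thm:fGpd} again yields ${\rm Gpd}_H(k)\leq p+q$. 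This is the whole point of having the equivalence (1)$\Leftrightarrow$(2) in Theorem \ref{thm:fGpd}: it converts the Gorenstein datum over $\overline{H}$ into a finite-projective-dimension datum, which restricts harmlessly. I would recommend replacing your third paragraph by this argument rather than trying to repair the sub-claim.

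For the equality, your spectral-sequence argument is essentially the paper's: both use Stefan's sequence $\Ext^i_{\overline{H}}(k,\Ext^j_K(k,-))\Rightarrow\Ext^{i+j}_H(k,-)$, the ${\rm FP}_\infty$ hypothesis, and the non-vanishing of $\Ext^p_K(k,K)$. Two points you leave implicit that the paper must (and does) handle: the test module is a free module $H^{(\kappa)}$ with $\kappa$ chosen so that $\Ext^q_{\overline{H}}(k,\overline{H}^{(\kappa)})\neq 0$ --- this uses Holm's characterisation of ${\rm Gpd}$ via $\Ext$ against \emph{projectives}, and one needs the tensor identity $M'\otimes_K H\cong M'\otimes\overline{H}$ together with ${\rm FP}_\infty$ to identify $\Ext^p_K(k,H^{(\kappa)})$ as a free $\overline{H}$-module, so that the $E_2$-page collapses to the single corner term. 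Finally, a small omission in both parts: to apply Theorem \ref{thm:fGpd} to $K$ and to $\overline{H}$ you must check that their antipodes are bijective, which the paper verifies from the bijectivity of $S$ on $H$ and the stability of $HK^+$ under $S$.
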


\begin{proof}
Note that $K \subseteq H$ is an $\overline{H}$-Galois extension, i.e. the Galois map
\[ H \otimes_K H \To H \otimes \overline{H}, \qquad h' \otimes h \mapsto \sum h'h_1 \otimes \overline{h}_2 \]
is bijective, with the inverse given by $\sum h' Sh_1 \otimes h_2 \mapsfrom h' \otimes \overline{h}$.
For any $k \in K$, the equality $(ad_l S^{-1}k_2)(Sk_1) = \sum (S^{-1}k_3) (Sk_1) k_2 = S^{-1}k$ implies $S^{-1}K \subseteq K$. Since $S$ is bijective, its restriction to $K$ is also invertible. Thus $K$ admits a bijective antipode.
By \cite[Lemma 1.4]{MS} we have $S(HB^+) = HB^+$, so the antipode $S$ preserves the Hopf ideal $HB^{+}$. This ensures the induced antipode of $\overline{H}$ is also bijective. 
By Theorem \ref{ffHGe-thm}, $H_K$ is faithfully flat, $_KH$ is projective, and $K$ is a direct summand of $H$ as a left $H$-module.
If either ${\rm Gpd}_K(k)$ or ${\rm Gpd}_{\overline{H}}(k)$ is infinite, the result is trivial.
Thus, we may assume that both ${\rm Gpd}_K(k) = n$ and ${\rm Gpd}_{\overline{H}}(k) = m$ are finite.
By Theorem \ref{thm:fGpd}, there exists an $\overline{H}$-monomorphism $_{\varepsilon}k \to \Lambda$ with ${\rm pd}_{\overline{H}}(\Lambda) = m$.
Since ${\rm Gpd}_K({_\varepsilon}k) = {\rm Gpd}_H(H \otimes_K {_\varepsilon}k = \overline{H})$, Lemma \ref{adj-lem} yields ${\rm Gpd}_{H}(\Lambda) \leq m + n$.
Applying \cite[Proposition 1.2]{ET}, we obtain an $H$-monomorphism $\Lambda \to \Gamma$ with ${\rm pd}_{H}(\Gamma) = {\rm Gpd}_{H}(\Lambda) \leq m+n$.
Composing with $_{\varepsilon}k \to \Lambda$ gives an $H$-monomorphism $_{\varepsilon}k \to \Gamma$; invoking Theorem \ref{thm:fGpd} again, we imply that ${\rm pd}_{H}(\Gamma) = {\rm Gpd}_{H}(\Lambda) \leq m+n$.
 
Now suppose further that ${\rm l.Ggldim}(\overline{H}) = {\rm r.Ggldim}(\overline{H}) = m < \infty$ and that the trivial $K$-module $_{\varepsilon}k$ admits a projective resolution $\cdots \to P_n \to \cdots \to P_1 \to P_0 \to {}_{\varepsilon}k \to 0$, where each $P_i$ is finitely generated. By \cite[Theorem 2.20]{Hol},
\[ m = \sup\{ i \mid \Ext^i_{\overline{H}}(k, P) \neq 0 \text{ for some projective $\overline{H}$-module $P$} \}. \]
In particular, there exists a cardinal $\kappa$ such that $\Ext^m_{\overline{H}}(k, -)$ does not vanish on the free $\overline{H}$-module $\overline{H}^{(\kappa)}$.
To prove the desired equality, it suffices to show $\Ext^n_H({}_{\varepsilon}k, H^{(\kappa)}) \neq 0$.
We use Stefan's spectral sequence \cite{S} for cohomology:
\begin{equation}\label{stefan-ss-eq}
  \Ext^p_{\overline{H}^{op}}(k, \Ext^q_K(k, H^{(\kappa)})) \Rightarrow \Ext^{p+q}_H(k, H^{(\kappa)}).
\end{equation}
First, we recall the natural module structures induced by the Hopf algebra structure: \\
(i) For left $H$-modules $M$ and $N$, $\Hom_{K}(M, N)$ carries a right $\overline{H}$-module structure defined by
\[ (g \leftharpoonup \overline{h}) (x) = \sum Sh_1 g(h_2 x), \]
for any $g \in \Hom_K(M, N)$, $h \in H$ and $x \in M$. \\
(ii) $\Hom_K(M, K)$ is a right $H$-module with action
\[ (f \leftharpoonup h) (x) = \sum Sh_2 f(h_3 x) S^2h_1, \]
for any $f \in \Hom_K(M, K)$, $h \in H$ and $x \in M$. \\
(iii) For a right $H$-module $M'$, $M' \otimes_K N$ is a right $\overline{H}$-module via
\[ (x \otimes y) \leftharpoonup \overline{h} = \sum xh_2 \otimes (Sh_1)y. \]
In particular, $M' \otimes_K H$ is isomorphic to the free right $\overline{H}$-module $M' \otimes \overline{H}$ via $x \otimes h \mapsto \sum xh_1 \otimes S^{-1}h_2$.

The natural map
\[ \Hom_K(M, K) \otimes_K N \To \Hom_K(M, N), \qquad f \otimes y \mapsto \big(x \mapsto f(x)y \big) \]
is $\overline{H}$-linear and an isomorphism if $M$ is a finitely generated projective $K$-module.
Applying this to the resolution $P_{\bullet}$, we obtain isomorphisms of complexes of right $\overline{H}$-modules:
\[ \Hom_K(P_{\bullet}, H^{(\kappa)}) \cong \Hom_K(P_{\bullet}, K) \otimes_K H^{(\kappa)} \cong \Hom_K(P_{\bullet}, K) \otimes \overline{H}^{(\kappa)}. \]
Taking cohomologies, we have
\[ \Ext_K^q(k, H^{(\kappa)}) = \begin{cases}
  0, & q > n \\
  \Ext_K^n(k, K) \otimes \overline{H}^{(\kappa)}, & q = n
\end{cases}. \]
By our assumption, $\Hom_K^n(P_{\bullet}, K^{(\alpha)}) \cong \Hom_K^n(P_{\bullet}, K)^{(\alpha)}$ for any cardinal $\alpha$.
Since ${\rm Gpd}_K(k) = n < \infty$, $\Ext^n_K(k, K) \neq 0$, and thus $\overline{H}^{(\kappa)}$ is a direct summand of $\Ext_K^n(k, H^{(\kappa)})$ as an $\overline{H}$-module.
Returning to the spectral sequence \eqref{stefan-ss-eq}, we get an isomorphism 
\[\Ext^{n+m}_H(k, H^{(\kappa)}) \cong \Ext^m_{\overline{H}}(k, \Ext^n_K(k, H^{(\kappa)})).\]
By our choice of $\kappa$, $\Ext^m_{\overline{H}}(k, \overline{H}^{(\kappa)}) \neq 0$, so $\Ext^{n+m}_H(k, H^{(\kappa)}) \neq 0$.
This implies that ${\rm l.Ggldim}(H) \geq n + m$.
Combining this with the earlier upper bound ${\rm l.Ggldim}(H) \leq n + m$, we conclude the required equality ${\rm l.Ggldim}(H) = {\rm l.Ggldim}(K) + {\rm l.Ggldim}(\overline{H})$. 
\end{proof}

\vspace{0.1in}

\section{Gorenstein invariants under monoidal Morita-Takeuchi equivalences}
\noindent In this section, we intend to show that Gorenstein global dimension, self-injective dimension, and AS Gorenstein property are invariant under  monoidal Morita-Takeuchi equivalence of Hopf algebras.

Throughout this section, $H$ is assumed to be a Hopf algebra with a bijective antipode.
Let $A\subseteq B$ be a faithfully flat $H$-Galois extension and $R$ a right $H$-module algebra. Recall that $L:= (R \otimes B)^{co H} = R \Box_H B^{op}$ is a subalgebra of $R \otimes B^{op}$.
By Theorem \ref{ff-Hopf-Galois-ext-Hopf-bimod-cat-str-thm}, there is an equivalence 
\[ \xymatrix{ (-)^{co H}: {_R\!\M_B^H}\ar@<0.5ex>[r] &{_L\!\M}: - \otimes_A B \ar@<0.5ex>[l] } \]
of abelian categories.
Therefore, the category ${_R\!\M_B^H}$ has enough projective objects, as this is already true for ${_L\!\M}$.

Let $U: {_R\!\M_B^H} \to {_R\!\M_B}$ be the forgetful functor, which acts by discarding the $H$-comodule structure of any Hopf module in ${_R\!\M_B^H}$.
For any $R \otimes B^{op}$-module $M$, $M \otimes H$ can be viewed as a Hopf module in ${_R\!\M_B^H}$ through the structure defined by
\begin{equation*}
	r(m \otimes h)b = \sum r_0 m b_0 \otimes r_1 h b_1, \qquad \rho(m \otimes h) = \sum m \otimes h_1 \otimes h_2,
\end{equation*}
for any $m \in M$, $h \in H$, $r \in R$ and $b \in B$.
The functor $- \otimes H: {_R\!\M_B} \to {_R\!\M_B^H}$ serves as a right adjoint to the forgetful functor $U$, that is, there exists a natural isomorphism
\begin{equation*}
	\Hom_{_R\!\M_B}(U(X), M) \To \Hom_{_R\!\M^H_B}(X, M \otimes H), \;\; f \mapsto \big( x \mapsto \sum f(x_0) \otimes x_1 \big)
\end{equation*}
for any $X \in {_R\!\M_B^H}$ and $M \in {_R\!\M_B}$.
The inverse map is given by $(\id_M \otimes \varepsilon) \circ g \mapsfrom g$.

Consider the composition functors given by the following commutative diagrams:
\[ \xymatrix{ {_R\!\M_B^H} \ar[r]^-{U} & {_R\!\M_B} \\ {_L\!\M} \ar[u]^{- \otimes_A B} \ar[ru]_-{F} } \qquad \xymatrix{ {_R\!\M_B^H} \ar[d]_{(-)^{co H}} & {_R\!\M_B} \ar[l]_-{- \otimes H} \ar[dl]^-{G} \\ {_L\!\M} }. \]
One checks directly that $F: {_L\!\M} \rightleftarrows {_R\!\M_B}: G$ forms an adjoint pair, and that $G: {_R\!\M_B} \to {_L\!\M}$ coincides with the restriction functor.






\begin{prop}\label{Gpd-L-RB-prop}
  For any left $L$-module $M$, the following equalities hold:
  \[ {\rm Gpd}_{_R\!\M^H_B}(M \otimes_A B) = {\rm Gpd}_L(M) = {\rm Gpd}_{R \otimes B^{op}}(F(M)). \]
In particular, $M$ is Gorenstein projective if and only if $M \otimes_A B$ is also Gorenstein projective in $_R\!\M^H_B$.
\end{prop}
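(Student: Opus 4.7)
The plan is to prove the two equalities separately; the ``in particular'' assertion is then the zero-dimensional case of the first equality.

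For the first equality ${\rm Gpd}_L(M) = {\rm Gpd}_{{_R\M^H_B}}(M \otimes_A B)$, I invoke the equivalence of abelian categories $(- \otimes_A B, (-)^{co H})$ from Theorem \ref{ff-Hopf-Galois-ext-Hopf-bimod-cat-str-thm}. Any such equivalence identifies projective objects, sends projective resolutions to projective resolutions, and, being fully faithful, translates total acyclicity on either side. Hence it matches Gorenstein projective modules and the minimal lengths of their Gorenstein projective resolutions, giving the claimed equality.

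For the second equality ${\rm Gpd}_L(M) = {\rm Gpd}_{R \otimes B^{op}}(F(M))$, I apply Lemma \ref{adj-lem} to the adjoint pair $F \dashv G$. Both functors are exact: $G$ is restriction along the algebra inclusion $L \hookrightarrow R \otimes B^{op}$, while $F = U \circ (- \otimes_A B)$ is the composition of an equivalence with the exact, faithful functor $U$, so $F$ is likewise exact and faithful. The crux is the compatibility $\mathsf{add}\,G(\mathsf{Proj}({_R\M_B})) = \mathsf{Proj}({_L\M})$, which, in direct analogy with Proposition \ref{prop:equ-1}, reduces to two claims: (i) $R \otimes B^{op}|_L$ is projective as a left $L$-module, and (ii) $L$ is a direct summand of $R \otimes B^{op}|_L$ as a left $L$-module.

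Claim (i) is established as follows. By Yoneda together with the adjunction $F \dashv G$, both $F(L)$ and $R \otimes B^{op}$ corepresent the forgetful functor on ${_R\M_B}$, so $F(L) \cong R \otimes B^{op}$ in ${_R\M_B}$. Since $F(L) = U(L \otimes_A B) = L \otimes_A B$, this identifies $R \otimes B^{op}|_L$ with $L \otimes_A B$ as a left $L$-module. Theorem \ref{ffHGe-thm}(5) makes $B$ projective as a left $A$-module, so $L \otimes_A B$ is a summand of some $L^{(I)}$, hence projective over $L$. The main obstacle will be (ii), that is, the construction of an $L$-linear retraction of the inclusion $L \hookrightarrow R \otimes B^{op}$. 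This is the direct analogue of the identity $(\id_H \otimes \varepsilon) \circ \varphi = \id_H$ used in Proposition \ref{prop:equ-1}, and I expect the retraction to arise from the right $H$-colinear, left $A$-linear splitting of $A \otimes B \to B$ given by Theorem \ref{ffHGe-thm}(5), combined with the counit $\varepsilon: H \to k$. Once (i) and (ii) are in hand, Lemma \ref{adj-lem} concludes the proof.
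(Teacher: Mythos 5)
Your overall strategy coincides with the paper's: the first equality is read off from the abelian-category equivalence of Theorem \ref{ff-Hopf-Galois-ext-Hopf-bimod-cat-str-thm}, and the second is obtained by feeding the adjoint pair $F \dashv G$ into Lemma \ref{adj-lem}, with the condition $\mathsf{add}\,G(\mathsf{Proj}({_R\!\M_B})) = \mathsf{Proj}({_L\!\M})$ reduced to exactly the two facts the paper uses: $G(R \otimes B^{op})$ is a projective $L$-module, and $L$ is a direct summand of it. The paper disposes of these two facts by citing \cite[Proposition 3.9]{Zh}; up to that point your proposal is a faithful reconstruction of the argument.

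The gap lies in your treatment of those two facts. For (i), the Yoneda/adjunction argument correctly yields $F(L) \cong R \otimes B^{op}$ in ${_R\!\M_B}$, hence $(R \otimes B^{op})|_L \cong (L \otimes_A B)|_L$ as left $L$-modules. But the $L$-action you then use to deduce projectivity is the first-factor action, whereas the action actually inherited from the ${_R\!\M_B}$-structure of Theorem \ref{ff-Hopf-Galois-ext-Hopf-bimod-cat-str-thm} is
$l \cdot (n \otimes_A b) = \sum \bigl(l^1_0 \otimes \kappa^1(l^1_1)\bigr) \vartriangleright n \otimes_A \kappa^2(l^1_1)\, b\, l^2$,
a diagonal-type action governed by the translation map (it is essentially the comultiplication of the Ehresmann--Schauenburg bialgebroid). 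Identifying this module with $L \otimes_A B$ carrying the first-factor action requires a nontrivial untwisting isomorphism in the spirit of the fundamental theorem of Hopf modules; even in the special case of Proposition \ref{prop:equ-1} this untwisting is the whole content of the assertion that $H^e$ is projective over $H$ via $\varphi$. For (ii) you explicitly stop at the statement that you ``expect'' the retraction to come from the equivariant splitting of $A \otimes B \to B$, so the $L$-linear retraction of $L \hookrightarrow R \otimes B^{op}$ is never constructed. Both facts are true --- they are precisely what \cite[Proposition 3.9]{Zh} provides --- but as written your proposal identifies the right ingredients without proving them.
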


\begin{proof}
It suffices to examine that the adjoint pair $F: {_L\!\M} \rightleftarrows {_R\!\M_B}: G$ satisfies the conditions of Lemma \ref{adj-lem}. Because $B$ is a faithfully flat $A$-module, the functor $- \otimes_A B$ is faithful.
The forgetful functor $U$ is clearly faithful as well, so their composite $F$ is also faithful. The equality $\mathsf{add} (G \mathsf{Proj}({_R\!\M_B})) = \mathsf{Proj}({_L\!\M})$ follows directly from two facts (cf. \cite[Proposition 3.9]{Zh}):
$G(R \otimes B^{op})$ is a projective $L$-module; $L$ is direct summand of $G(R \otimes B^{op})$ as an $L$-module.
\end{proof}

\noindent 
In the following, we specify the right $H$-comodule algebra $R$ to $B$.
Indeed, ${_B\!\M^H_B}$ is a monoidal category equipped with the tensor product over $B$.
Although $L = (B^e)^{co H}$ is not generally a bialgebra, the category ${_L\!\M}$ nevertheless admits a monoidal structure, which is induced by that of ${_B\!\M^H_B}$.
In fact, $L$ is a bialgebroid (as defined in \cite{S2}), which generalizes the notion of a bialgebra to a non-commutative base ring. Specifically, $(B^e)^{co H}$ is known as the Ehresmann-Schauenburg bialgebroid \cite[34.14]{BW}.

Moreover, we assume $A = k$. In this setting, as shown in \cite{S1}, $L= (B^e)^{co H}$ inherits a Hopf algebra structure. Its comultiplication is given by
\[ \Delta(l) = \sum {l^1}_0 \otimes \kappa^1({l^1}_1) \otimes \kappa^2({l^1}_1) \otimes l^2\]
for $l = \sum l^1 \otimes l^2 \in L \subseteq B^e$.
The counit of $L$ is defined as $\varepsilon(l) = \sum l^1l^2$, and the antipode is given by $S(l) = \sum {l^2}_0 \otimes \kappa^1({l^2}_1) l^1 \kappa^2({l^2}_1)$.
In addition, $B$ is endowed with a left $L$-comodule algebra structure via the comodule map $\delta: B \rightarrow L \otimes B$ defined as
\[b \mapsto \sum \big( b_0 \otimes \kappa^1(b_1) \big) \otimes \kappa^2(b_1).\]
With these structures, $B$ is an $L$-$H$-biGalois extension of $k$;  that is, it is simultaneously a right $H$-Galois extension and a left $L$-Galois extension of $k$, and the two comodule structures make it an $L$-$H$-bicomodule.

Schauenburg has shown in \cite[Corollary 5.7]{S1} the equivalence of the two assertions: Two Hopf algebras $L$ and $H$ are monoidally Morita-Takeuchi equivalent (i.e. their comodule categories are monoidally equivalent), if and only if there exists an $L$-$H$-biGalois extension of $k$.

\begin{thm}\label{mMTe-thm}
	Let $H$ and $L$ be monoidally Morita-Takeuchi equivalent Hopf algebras with bijective antipodes, and let $B$ be an $L$-$H$-biGalois extension of $k$. Then
	\[ {\rm Ggldim}(H) = {\rm Gpd}_H(k) = {\rm Gpd}_{B^e}(B) = {\rm Gpd}_L(k) = {\rm Ggldim}(L). \]
\end{thm}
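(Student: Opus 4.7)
The plan is to combine Corollary \ref{cor:equ-2} with Proposition \ref{Gpd-L-RB-prop}, invoking the latter in two complementary ways corresponding to the two Galois structures on $B$. First, Corollary \ref{cor:equ-2} applied to each of $H$ and $L$ immediately gives the outer equalities ${\rm Ggldim}(H) = {\rm Gpd}_H(k)$ and ${\rm Gpd}_L(k) = {\rm Ggldim}(L)$. The theorem therefore reduces to the two central identifications
\[ {\rm Gpd}_H(k) \;=\; {\rm Gpd}_{B^e}(B) \;=\; {\rm Gpd}_L(k). \]

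For the right-hand equality, I would specialise Proposition \ref{Gpd-L-RB-prop} to $R = B$ and $A = k$, so that $L = (B^e)^{co H}$ carries its Ehresmann--Schauenburg Hopf algebra structure, and take $M$ to be the trivial left $L$-module $k$. Under the equivalence $(-)^{co H}\colon {_B\!\M_B^H} \xrightarrow{\sim} {_L\!\M}$ of Theorem \ref{ff-Hopf-Galois-ext-Hopf-bimod-cat-str-thm}, the regular Hopf bimodule $B$ corresponds precisely to this trivial module: one has $B^{co H} = k$, and for every $l \in L$ the action computes as $l \vartriangleright 1 = \sum l^1 \cdot 1 \cdot l^2 = \sum l^1 l^2 = \varepsilon_L(l)$. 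Hence $k \otimes_k B \cong B$ in ${_B\!\M_B^H}$ and $F(k) = U(k \otimes_k B) = B$ as a $B^e$-module, so Proposition \ref{Gpd-L-RB-prop} delivers ${\rm Gpd}_L(k) = {\rm Gpd}_{B^e}(B)$.

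For the remaining equality ${\rm Gpd}_H(k) = {\rm Gpd}_{B^e}(B)$ I would argue by symmetry. Because $B$ is simultaneously a left $L$-Galois extension of $k$, Schauenburg's theory identifies $H$ with the Ehresmann--Schauenburg Hopf algebra arising from this $L$-Galois structure (equivalently, $H$ is recovered as the appropriate coinvariants of $B^e$ under the induced left $L$-coaction). The ``left-handed'' counterpart of Proposition \ref{Gpd-L-RB-prop}, in which right-comodule data is replaced by left-comodule data throughout---a replacement legitimised by the bijectivity of $S_L$---then applies with the trivial $H$-module in place of the trivial $L$-module and yields the required equality. Chaining the four identifications completes the proof.

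The main obstacle is precisely this symmetry step: Proposition \ref{Gpd-L-RB-prop} as stated handles only right $H$-comodule algebras and right $H$-Galois extensions, so one must either set up its mirror image explicitly for the left $L$-Galois extension, or else translate $B$ into a right $L^{\op}$-Galois extension via the antipode of $L$. Once this mirror version is in place, the verifications that the corresponding functor pair is adjoint and faithful and satisfies $\mathsf{add}\,G(\mathsf{Proj}) = \mathsf{Proj}$ go through exactly as in the original (cf.\ \cite[Proposition~3.9]{Zh}), and the identification of the regular $B$-bimodule with the trivial $H$-module is again supplied by the counit formula applied to the reconstructed Hopf algebra.
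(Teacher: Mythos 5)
Your proposal is correct and follows essentially the same route as the paper: the outer equalities come from the test-module result (Corollary \ref{cor:equ-2}, equivalently Theorem \ref{thm:fGpd}), and both middle equalities come from Proposition \ref{Gpd-L-RB-prop} with $R=B$, $A=k$, $M=k$, using exactly your identification of the regular bimodule $B$ with the trivial module over the coinvariants Hopf algebra. The only difference is that the paper disposes of your ``main obstacle'' in one line: it views $B$ as a right $L^{cop}$-Galois object (a left $L$-comodule being the same datum as a right $L^{cop}$-comodule, no antipode twist needed) with ${}^{co L}(B^e)=(B^e)^{co L^{cop}}\cong H$, so the right-handed Proposition \ref{Gpd-L-RB-prop} applies verbatim instead of a mirrored version or your $L^{op}$-via-$S$ translation.
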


\begin{proof}
We observe that $B$ is a right $L^{cop}$-Galois extension of $k$, and that 
\[^{co L}(B^e) = (B^e)^{co L^{cop}} \cong H \]
as Hopf algebras. By Theorem \ref{thm:fGpd} and Proposition \ref{Gpd-L-RB-prop}, it follows immediately that 
\[{\rm Ggldim}(H) = {\rm Gpd}_H(k) = {\rm Gpd}_{B^e}(B),\quad
{\rm Ggldim}(L) = {\rm Gpd}_L(k) = {\rm Gpd}_{B^e}(B). \]
This yields the required equality.
\end{proof}

The above result implies that monoidally Morita-Takeuchi equivalence preserves Gorensteinness of Hopf algebras. We expect to know whether it also preserves AS Gorenstein property of Hopf algebras. It is well-known that any finite dimensional Hopf algebra is Gorenstein, and AS Gorenstein as well. It follows from Wu and Zhang \cite{WZ} that every noetherian affine PI Hopf algebra is Gorenstein and AS Gorenstein. However, there are Gorenstein Hopf algebras which are not necessarily AS Gorenstein.

\begin{lem}\label{lem:AS-Gor}
Let $H$ be a noetherian Hopf algebra with a bijective antipode. The following are equivalent:
\begin{enumerate}
\item $H$ is AS Gorenstein.
\item $H$ is Gorenstein, and both ${\rm Ext}_H^i(k, H)$ and ${\rm Ext}_{H^{op}}^i(k, H)$ are finite dimensional over $k$.
\end{enumerate}
\end{lem}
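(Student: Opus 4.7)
The strategy is to connect the Gorenstein condition with finite two-sided injective dimension via Corollary~\ref{silp=spli-noe-Hopf-cor}, and then to pin down the shape of $\Ext^*_H({_\varepsilon}k,H)$ by a duality argument.

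For (1) $\Rightarrow$ (2), the AS Gorenstein definition directly gives that $\Ext^i_H({_\varepsilon}k,H)$ is either zero or equal to $k$, hence finite-dimensional, and finite injective dimension of ${_H}H$ yields Gorensteinness via Corollary~\ref{silp=spli-noe-Hopf-cor}. For the finite-dimensionality of $\Ext^i_{H^{op}}(k_\varepsilon,H)$, I would use that the bijective antipode $S$ induces an equivalence of module categories (with $m\mapsto S^{-1}(m)$ turning the left regular ${_H}H$ into the right regular $H_H$), which yields a natural isomorphism $\Ext^i_H({_\varepsilon}k,H)\cong\Ext^i_{H^{op}}(k_\varepsilon,H)$.

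For (2) $\Rightarrow$ (1), set $d:={\rm l.Ggldim}(H)$; by Gorensteinness and Corollary~\ref{silp=spli-noe-Hopf-cor}, $d={\rm id}_H(H)<\infty$, so $\Ext^i_H(k,H)=0$ for $i>d$ is immediate. Theorem~\ref{thm:fGpd} gives ${\rm Gpd}_H({_\varepsilon}k)=d$, and Holm's Ext-characterization of Gorenstein projective dimension \cite[Theorem 2.20]{Hol} then furnishes a projective module $P$ with $\Ext^d_H(k,P)\neq 0$; noetherianness of $H$ makes $k$ finitely presented, so $\Ext^d_H(k,-)$ commutes with direct sums, forcing $\Ext^d_H(k,H)\neq 0$.

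The remaining task, which constitutes the main obstacle, is to show $\Ext^i_H(k,H)=0$ for $i<d$ together with $\dim_k\Ext^d_H(k,H)=1$. I would work with the dualizing functor $\mathbf{R}\Hom_H(-,H)\colon D^b(\mathrm{mod}^{\mathrm{fg}}H)\to D^b(\mathrm{mod}^{\mathrm{fg}}H^{op})$, well-defined by finite two-sided injective dimension together with noetherianness. The two-sided finite-dimensionality of the $\Ext$ groups in (2) is precisely what confines $\mathbf{R}\Hom_H(k,H)$ and its iterated $H^{op}$-dual to bounded complexes with finite-dimensional cohomology, permitting the biduality isomorphism $\mathbf{R}\Hom_{H^{op}}(\mathbf{R}\Hom_H(k,H),H)\simeq k$. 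The resulting hyper-Ext spectral sequence
\[ E_2^{p,q}=\Ext^p_{H^{op}}\bigl(\Ext^q_H(k,H),H\bigr)\;\Longrightarrow\; k, \]
combined with the two-sided vanishing bounds from Gorensteinness, then collapses $\Ext^*_H(k,H)$ to a single one-dimensional contribution in degree $d$. The hardest step is justifying this biduality within the noetherian finitely-generated derived-category setup without having AS Gorenstein in hand, which is exactly where the asymmetric-looking hypothesis of (2) does its essential work.
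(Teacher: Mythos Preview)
Your approach coincides in spirit with the paper's: the paper simply invokes \cite[Lemma~3.2]{BZ} for $(2)\Rightarrow(1)$, and what you sketch is essentially a reconstruction of that lemma. So there is no genuine methodological difference.

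There is, however, a misdiagnosis of where the difficulty lies. You flag the biduality $\RHom_{H^{op}}(\RHom_H(k,H),H)\simeq k$ as ``the hardest step'', but this is in fact standard: once $H$ is noetherian with finite left and right self-injective dimension (which you have via Corollary~\ref{silp=spli-noe-Hopf-cor}), $H$ is a dualizing complex over itself and biduality holds on $D^b(\mathrm{mod}^{\mathrm{fg}}H)$ with no Hopf input whatsoever. The finite-dimensionality hypotheses in (2) are not needed for biduality.

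The step you pass over---the collapse of the spectral sequence to a single one-dimensional term---is the one that genuinely requires the Hopf structure and the hypotheses of (2). Biduality alone does not force $\Ext^*_H(k,H)$ to be concentrated in one degree: over a general Iwanaga--Gorenstein ring a simple module can have several nonvanishing $\Ext^i(-,R)$. What makes the argument go through for Hopf algebras is the identity
\[
\Ext^p_{H^{op}}(N,H)\;\cong\;\Ext^p_{H^{op}}(k,H)^{\dim_k N}
\]
for every finite-dimensional right $H$-module $N$, obtained from $\Hom_{H^{op}}(N,-)\cong\Hom_{H^{op}}(k,-\otimes N^{\vee})$ together with the freeness of $H\otimes N^{\vee}$ as a right $H$-module (cf.\ Lemma~\ref{lem:fact'}). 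Since the $\Ext^q_H(k,H)$ are finite-dimensional by (2), this gives $\dim_k E_2^{p,q}=b_p\cdot a_{-q}$ with $a_i=\dim_k\Ext^i_H(k,H)$ and $b_p=\dim_k\Ext^p_{H^{op}}(k,H)$. Now the corner $E_2^{d,-j}$ (with $j$ the grade of ${}_{\varepsilon}k$ and $b_d\neq 0$ by your nonvanishing argument) admits no incoming or outgoing differentials, so it survives to $E_\infty$; convergence to $k$ in total degree $0$ forces $j=d$, and then degeneration gives $a_d b_d=1$. This is the mechanism behind \cite[Lemma~3.2]{BZ}; your sketch would be complete once you insert this Hopf-algebraic identity in place of the biduality remark.
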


\begin{proof}
The implication (1)$\Rightarrow$(2) follows directly from the definition. For the converse, assume that ${\rm Gpd}_H(k)= d$ is finite. Then the self-injective dimension of $H$ equals $d$. Hence, (2)$\Rightarrow$(1) holds in view of \cite[Lemma 3.2]{BZ}.
\end{proof}

\begin{ex}\label{not-AS-G-eg}
Let $H$ be the free algebra $k\langle x_1, x_2, \dots, x_n \rangle$ with $n \geq 2$.
Then $H$ is Hopf algebra with comultiplication, counit and antipode defined by
\[ \Delta(x_i) = 1 \otimes x_i + x_i \otimes 1, \qquad \varepsilon(x_i) = 0, \qquad S(x_i) = -x_i. \]
Noting that the global dimension of $H$ equals 1, $H$ is definitely a 1-Gorenstein algebra. However, $\Ext^{1}_{H}({}_{\varepsilon}k, H)$ is infinite dimensional. Hence, $H$ is not AS Gorenstein.
\end{ex}

It follows from \cite[Theorem 4.10]{Zh} that: Let $H$ and $L$ be monoidally Morita-Takeuchi equivalent Hopf algebras of type ${\rm FP}_\infty$ with bijective antipodes. If $H$ is AS regular of dimension $d$, then $L$ is AS Gorenstein of dimension $d$. Based on this, a question is raised in \cite{Zh}: for monoidally Morita-Takeuchi equivalent Hopf algebras $H$ and $L$, do they have the same self-injective dimension? In the following, an affirmative answer is given in (1) under the noetherian hypothesis, and (2) is of particular interest in view of the Question raised by Brown and Goodearl.

\begin{prop}\label{ID-H-L-cor}
    Let $H$ and $L$ be monoidally Morita-Takeuchi equivalent noetherian Hopf algebras with bijective antipodes.
    Then the following hold.
    \begin{enumerate}
    \item ${\rm id}_H(H) = {\rm id}_L(L)$.
    \item $H$ is AS Gorenstein if and only if $L$ is AS Gorenstein.
    \end{enumerate}
\end{prop}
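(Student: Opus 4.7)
My plan for part~(1) is to chain together three equalities that have already been established. Since $H$ is a noetherian Hopf algebra with bijective antipode, Corollary~\ref{silp=spli-noe-Hopf-cor} gives ${\rm id}_H(H)={\rm l.Ggldim}(H)$, and Corollary~\ref{cor:equ-2} tells us this left invariant coincides with ${\rm Gpd}_{H^e}(H)={\rm Ggldim}(H)$. The same identifications hold verbatim for $L$. The pivotal central equality ${\rm Ggldim}(H)={\rm Ggldim}(L)$ is exactly Theorem~\ref{mMTe-thm}, so chaining yields ${\rm id}_H(H)={\rm id}_L(L)$.

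For part~(2) I will use that monoidal Morita--Takeuchi equivalence is symmetric (the biGalois object $B$ plays the two sides symmetrically), so it suffices to show the forward implication. Assume $H$ is AS Gorenstein of dimension $d$. By part~(1) the injective dimension of $_LL$ is also $d$, so $L$ is Gorenstein. Lemma~\ref{lem:AS-Gor} then reduces matters to checking that $\Ext^i_L(k,L)$ and $\Ext^i_{L^{op}}(k,L)$ are finite-dimensional over $k$ for every $i$, given the corresponding finiteness over~$H$.

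The natural bridge is the biGalois object $B$ itself. Using the equivalence $(-)^{co H}\colon{_B\M^H_B}\xrightarrow{\sim}{_L\M}$ from Theorem~\ref{ff-Hopf-Galois-ext-Hopf-bimod-cat-str-thm} (taking $R=B$ and $A=k$), the trivial $L$-module $k$ corresponds to $B$ regarded as the regular Hopf bimodule, while $_LL$ corresponds to an object naturally isomorphic to $B\otimes_k B$ with its canonical Hopf bimodule structure. Thus
\[\Ext^i_L(k,L)\;\cong\;\Ext^i_{{_B\M^H_B}}(B,\,B\otimes_k B).\]
I would then combine this with the adjunction $U\dashv(-\otimes H)$ between ${_B\M_B}$ and ${_B\M^H_B}$ recalled at the top of Section~3, together with the faithful flatness of $B$ over $k$, to produce an isomorphism of the shape
\[B\otimes_k \Ext^i_L(k,L)\;\cong\;\Ext^i_H(k,H)\otimes_k B\]
(as $L$-$H$-bicomodules, say). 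Faithful flatness of $B$ over $k$ then transfers finite-dimensionality from the $H$-side to the $L$-side. The analogous statement for $\Ext^i_{L^{op}}(k,L)$ follows by running the same argument with $B^{op}$ viewed as an $H^{op}$-$L^{op}$-biGalois extension of $k$.

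The hard part will be rigorously constructing the last displayed isomorphism and tracking module structures accurately: this requires a careful verification of the module correspondence for $_LL$ under the equivalence via the bialgebroid structure on $L=(B^e)^{co H}$, and then a controlled use of the adjunction combined with faithful flatness to descend the Ext computation from $H$ to $L$. One alternative I would keep in mind, if the direct isomorphism proves awkward, is to set up a Lyndon--Hochschild--Serre style spectral sequence for the Hopf-Galois extension $k\subseteq B$ analogous to Stefan's sequence \eqref{stefan-ss-eq}, which would likewise transfer finite-dimensionality. Once this transfer step is in place, Lemma~\ref{lem:AS-Gor} closes out the argument.
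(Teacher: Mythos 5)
Your part (1) is exactly the paper's argument: chain Corollary \ref{silp=spli-noe-Hopf-cor} with Corollary \ref{cor:equ-2} and Theorem \ref{mMTe-thm}; nothing to add there.

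For part (2) there is a genuine gap at the central step. The paper does not go through Lemma \ref{lem:AS-Gor}; it quotes the proof of \cite[Theorem 4.9(2),(4)]{Zh}, which shows that the (right) AS Gorenstein condition on $H$ is equivalent to $\Ext^i_{B^e}(B,B^e)$ being concentrated in degree $d$ and isomorphic to a twist $B_\mu$, and then uses the left--right symmetry of this $B^e$-condition (since $B$ is also a right $L^{cop}$-Galois object with $(B^e)^{co L^{cop}}\cong H$) to pass to $L$. Your route instead reduces, via Lemma \ref{lem:AS-Gor}, to transferring finite-dimensionality of $\Ext^i_H(k,H)$ and $\Ext^i_{H^{op}}(k,H)$, and the whole weight rests on the displayed isomorphism $B\otimes_k\Ext^i_L(k,L)\cong\Ext^i_H(k,H)\otimes_k B$, which you never construct --- you explicitly defer it as ``the hard part''. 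That isomorphism is precisely the nontrivial content that the paper outsources to \cite{Zh}, so leaving it unproved leaves the implication unestablished. Moreover, the mechanism you name is not sufficient as stated: the adjunction $U\dashv(-\otimes H)$ gives $\Ext^i_{B^e}(B,M)\cong\Ext^i_{{_B\!\M^H_B}}(B,M\otimes H)$, so it naturally produces $\Ext^i_{B^e}(B,B^e)$ as the intermediary, not $\Ext^i_H(k,H)$; and $B^e$ is in general \emph{not} isomorphic to $B\otimes H$ in ${_B\!\M^H_B}$, so one cannot shortcut this. Indeed, already for $B=H=L$ a naive identification would conflate $\Ext^i_H(k,H)$ with Hochschild cohomology $\Ext^i_{H^e}(H,H)$, and these differ exactly on the point at issue (for $H=k[x]$ one has $\Ext^1_H(k,H)\cong k$ while $\Ext^1_{H^e}(H,H)\cong k[x]$ is infinite-dimensional). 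To reach $\Ext^i_H(k,H)$ you must genuinely use the second Galois structure on $B$ (the mirror equivalence coming from $(B^e)^{co L^{cop}}\cong H$), or carry out the Stefan-type spectral sequence you mention only as a fallback; you would also need the resulting isomorphism to be at least one-sided $B$-linear so that finite-dimensionality of $\Ext^i_L(k,L)$ can be read off. The surrounding scaffolding is fine: the identification of $k$ with $B$ and of ${}_LL$ with $B\otimes B^{op}$ under $(-)^{co H}$, the use of part (1) to get ${\rm id}_L(L)<\infty$, and the appeal to Lemma \ref{lem:AS-Gor} are all legitimate --- but as written the proof of (2) is a plan with its decisive isomorphism missing.
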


\begin{proof}
The assertion (1) is an immediate consequence of Theorem \ref{mMTe-thm} and 
Corollary \ref{silp=spli-noe-Hopf-cor}.

(2)
Let $B$ be an $L$-$H$-biGalois extension of $k$.
Without loss of generality, we may assume that ${\rm id}_H(H) = {\rm id}_L(L) = d < \infty$.
By the proof of \cite[Theorem 4.9(2) and (4)]{Zh},
\[ \Ext^{i}_{B^e}(B, B^e) \cong \begin{cases}
    0, & i \neq d, \\
    B_{\mu}, & i = d,
\end{cases}\]
for some automorphism $\mu$ of $B$ if and only if 
\[ \Ext^{i}_{H^{op}}(k, H) \cong \begin{cases}
    0, & i \neq d, \\
    k, & i = d.
\end{cases} \]
Since $B$ is a right $L^{cop}$-Galois extension of $k$, it follows that
\[ \Ext^{i}_{H^{op}}(k, H) \cong \begin{cases}
    0, & i \neq d, \\
    k, & i = d,
\end{cases} \text{ if and only if } \Ext^{i}_{L^{op}}(k, L) \cong \begin{cases}
    0, & i \neq d, \\
    k, & i = d.
\end{cases} \]
This implies that $H$ is AS Gorenstein of dimension $d$ if and only if $L$ is AS Gorenstein of dimension $d$.
\end{proof}

\begin{ex}\label{}
The Liu Hopf algebra $H:= L(n, \xi)$ \cite{Liu} is generated by $x, g, g^{-1}$ with the relations
\[ gg^{-1} = g^{-1}g = 1, \quad xg = \xi gx, \quad x^n = 1 - g^n, \]
where $\xi$ is an $n$-th primitive root of unity. The comultiplication, counit and antipode on $H$ are defined by
\[ \Delta(g) = g \otimes g, \; \Delta(x) = x \otimes 1 + g \otimes x, \; \varepsilon(g) = 1, \; \varepsilon(x) = 0, \; S(g) = g^{-1}, \; S(x) = -g^{-1}x. \]
	
Let $L$ be the Hopf algebra which is generated by $y, g, g^{-1}$ with the relations
\[ gg^{-1} = g^{-1}g = 1, \quad yg = \xi gy, \quad y^n = 0. \]
The comultiplication, counit and antipode on $L$ are defined by
\[ \Delta(g) = g \otimes g, \; \Delta(y) = 1 \otimes y + y \otimes g, \; \varepsilon(g) = 1, \; \varepsilon(y) = 0, \; S(g) = g^{-1}, \; S(y) = -yg^{-1}. \]

As shown in \cite[Example 4.11]{Zh}, $L$ and $H$ are monoidal Morita-Takeuchi equivalent.
The Liu Hopf algebra $H$ has global dimension 1, whereas $L$ has infinite global dimension. However, Theorem \ref{mMTe-thm} shows that
 ${\rm Ggldim}(L) = {\rm Ggldim}(H) = {\rm gldim}(H) = 1$.
\end{ex}

\vspace{0.1in}

\section{Monoidal model category and tensor triangulated category}

\noindent  Let $A$ be a Gorenstein algebra, i.e. an algebra such that ${\rm l.Ggldim}(A)$ is finite. Analogous to \cite[Theorem 8.6]{H2}, there is a Gorenstein projective model structure on the category of left $A$-modules $_A\mathcal{M}$, which is defined explicitly as follows: 
\begin{itemize}
    \item The cofibrations (resp.\ the trivial cofibrations) are monomorphisms with cokernel being Gorenstein projective (resp. projetive) $A$-module.
    \item The fibrations (resp. the trivial fibrations) are the epimorphisms (resp. the epimorphisms with kernel being modules of finite projective dimension).
    \item The weak equivalences are the maps which factor as a trivial cofibration followed by a trivial fibration.
\end{itemize}
The associated homotopy category ${\rm Ho}({_A}\mathcal{M})$, obtained by formally inverting the weak equivalences, is a triangulated category.
\vspace{0.1in}

Let $H$ be a Hopf algebra over a field $k$. It is well-known that the category of left $H$-modules $_H\mathcal{M}$ is a monoidal category with the tensor product over $k$ and the unit object $k$. Recall that a monoidal model category is a model category with a compatible monoidal structure; cf. \cite[Chapter 4]{H1} for details.

\begin{thm}\label{thm:m-m-cat}
Let $H$ be a Hopf algebra over a field $k$ with a bijective antipode. If ${\rm Gpd}_H({_\varepsilon}k)$ is finite, then the cateogry $_H\mathcal{M}$ of left $H$-modules is a monoidal model category with respect to the Gorenstein projective model structure, and its homotopy category ${\rm Ho}({_H}\mathcal{M})$ is a tensor triangulated category.
\end{thm}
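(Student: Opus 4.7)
The plan is to verify Hovey's pushout-product and unit axioms for ${_H\mathcal{M}}$ equipped with the Gorenstein projective model structure and the monoidal structure $\otimes_k$. Throughout, I will exploit that, by Theorem \ref{thm:fGpd}, the hypothesis ${\rm Gpd}_H({_\varepsilon}k)<\infty$ supplies an $H$-monomorphism $\iota: {_\varepsilon}k \hookrightarrow \Lambda$ with ${\rm pd}_H(\Lambda)<\infty$; this is the input that drives Lemmas \ref{lem:pd}, \ref{lem:fact'}, and \ref{lem:Gproj}. Exactness of $\otimes_k$ on ${_H\mathcal{M}}$ (because $k$ is a field) will be used without further comment.

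For the pushout-product axiom, I take two cofibrations $f: U \hookrightarrow V$ and $g: X \hookrightarrow Y$ with cokernels $C_f = V/U$ and $C_g = Y/X$ that are Gorenstein projective, and aim to show that the pushout-product
\[ f \square g: (V \otimes X) \coprod_{U \otimes X} (U \otimes Y) \longrightarrow V \otimes Y \]
is a cofibration. Exactness of $\otimes_k$ makes $f \square g$ a monomorphism, and a standard diagram chase identifies its cokernel with $C_f \otimes C_g$; Lemma \ref{lem:Gproj}(1) then shows this cokernel is Gorenstein projective. When in addition $f$ (resp.\ $g$) is a trivial cofibration, so $C_f$ (resp.\ $C_g$) is projective, Lemma \ref{lem:fact}(2) (resp.\ Lemma \ref{lem:fact'}(2), where the bijective antipode enters) promotes $C_f \otimes C_g$ to a projective module, making $f \square g$ a trivial cofibration.

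For the unit axiom, I factor $0 \to {_\varepsilon}k$ as a cofibration $0 \to Q(k)$ followed by a trivial fibration $q: Q(k) \twoheadrightarrow {_\varepsilon}k$, so that $Q(k)$ is Gorenstein projective and $K := \ker q$ has finite projective dimension. For any cofibrant (hence Gorenstein projective) object $X$, tensoring the short exact sequence $0 \to K \to Q(k) \to {_\varepsilon}k \to 0$ with $X$ over $k$ preserves exactness, and Lemma \ref{lem:pd} yields ${\rm pd}_H(K \otimes X) \leq {\rm pd}_H(K) < \infty$. Consequently $q \otimes \id_X: Q(k) \otimes X \to X$ is an epimorphism with kernel of finite projective dimension, i.e. a trivial fibration, and in particular a weak equivalence.

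Once the axioms are in place, \cite[Theorem 4.3.2]{H1} produces a monoidal structure on ${\rm Ho}({_H}\mathcal{M})$ induced by $\otimes_k$. The homotopy category is triangulated since it is equivalent to the stable category of Gorenstein projective $H$-modules, and the derived tensor is a triangulated bifunctor by the general formalism of left Quillen bifunctors, yielding a tensor triangulated category. I expect the main obstacle to be the cokernel computation in the pushout-product step: verifying carefully that $f \square g$ is injective with cokernel exactly $C_f \otimes C_g$. This rests on the exactness of $\otimes_k$ together with a direct diagram chase, but it is exactly the place where all the monoidal closure properties of the Gorenstein projective and projective classes are actually put to use.
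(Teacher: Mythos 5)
Your proposal is correct and follows essentially the same route as the paper: the pushout-product axiom is verified via the closure of Gorenstein projective (resp.\ projective) modules under $\otimes_k$ using Lemmas \ref{lem:Gproj}, \ref{lem:fact}(2) and \ref{lem:fact'}(2) (you make explicit the cokernel identification $\operatorname{coker}(f\square g)\cong C_f\otimes C_g$, which the paper delegates to the standard argument in Hovey), and the unit axiom via the special Gorenstein projective approximation $0\to K\to Q(k)\to k\to 0$ together with Lemma \ref{lem:pd}. One small omission: since $\otimes_k$ on ${_H\mathcal{M}}$ is not symmetric, the unit axiom requires both $Q(k)\otimes X\to X$ and $X\otimes Q(k)\to X$ to be weak equivalences for cofibrant $X$; you verify only one side, but the other follows by the identical argument from the second inequality of Lemma \ref{lem:pd} (this is where the bijectivity of the antipode enters), exactly as in the paper's treatment of $1\otimes q$ and $q\otimes 1$.
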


\begin{proof}
We begin by verifying the pushout product axiom. Recall that for two given morphisms $f: M\rightarrow N$ and $g: M' \rightarrow N'$, the
pushout product of $f$ and $g$ is defined to be the map
\[f\Box g: (M\otimes N')\bigsqcup_{M\otimes M'} (N\otimes M')\longrightarrow N\otimes N'.\]
Invoking Lemmas \ref{lem:Gproj}, \ref{lem:fact}(2) and  \ref{lem:fact'}(2), for any $H$-modules $M$ and $N$, if either $M$ or $N$ is Gorenstein projective (resp. projective), then $M\otimes  N$ is also a Gorenstein projective (resp. projective) $H$-module. Then, by a standard argument (cf. \cite[Theorem 7.2]{H2}), if both $f$ and $g$ are cofibrations, so is $f\Box g$; moreover, if either $f$ or $g$ is a weak equivalence, then so is $f\Box g$. The pushout product axiom implies that the tensor product functor (of two variables) $-\otimes-: {_H}\mathcal{M}\times {_H}\mathcal{M}\rightarrow {_H}\mathcal{M}$ is a Quillen bifunctor in the sense of \cite[Definition 4.2.1]{H1}. Hence, for any $H$-modules $M$ and $N$, the derived tensor product in the homotopy category ${\rm Ho}({_H}\mathcal{M})$ is defined by $M\otimes^{\mathbb L} N = Q(M)\otimes Q(N)$, where $Q(M)$ and $Q(N)$ denote cofibrant replacements of $M$ and $N$, respectively. These cofibrant replacements are precisely special right Gorenstein projective approximations of $M$ and $N$. Moreover, the operation $\otimes^{\mathbb L}$ satisfies the associativity isomorphism in the homotopy category ${\rm Ho}({_H}\mathcal{M})$. 

Regarding the unit axiom, we consider a short exact sequence of $H$-modules 
\[0\longrightarrow N\longrightarrow G\stackrel{q}\longrightarrow k\longrightarrow 0,\] 
where $G$ is Gorenstein projective and $N$ has finite projective dimension.
Such a sequence exists by \cite[Theorem 2.10]{Hol} because ${\rm Gpd}_H({_\varepsilon}k)<\infty$. In particular, $q: G\rightarrow k$ is a special right Gorenstein projective approximation of $k$. Under the Gorenstein projective model structure, $q: G \rightarrow k$ is therefore a trivial fibration, and $G:=Q(k)$ serves as a cofibrant replacement of $k$. Let $M$ be an arbitrary $H$-module.
We now consider the induced short exact sequences of left $H$-module 
 \[0\rightarrow M\otimes N\rightarrow M\otimes G\stackrel{1\otimes q}\rightarrow M\rightarrow 0, \text{ and }\,\, 0\rightarrow N\otimes M\rightarrow G\otimes M\stackrel{q\otimes 1}\rightarrow M\rightarrow 0.\]
 In view of Lemma \ref{lem:pd},  both $M\otimes  N$ and $N\otimes M$ have finite projective dimension since so does $N$.  Hence both $1\otimes q: M\otimes G\rightarrow M\otimes k = M$ and  $q\otimes 1: G\otimes M\rightarrow k\otimes M = M$ are trivial fibrations, and in particular weak equivalences. Now the composition
 \[M \otimes^{\mathbb L} Q(k) = Q(M) \otimes Q(k) = Q(M) \otimes G
     \stackrel{1 \otimes q}{\longrightarrow} Q(M) \otimes k = Q(M)
     \stackrel{q_M}{\longrightarrow} M \]
is an isomorphism in ${\rm Ho}({_H}\mathcal{M})$ because both
$1 \otimes q$ and $q_M$ are weak equivalences. Similarly, we obtain an isomorphism $Q(k)\otimes^{\mathbb L} M\rightarrow M$  in ${\rm Ho}({_H}\mathcal{M})$. Consequently, the homotopy category ${\rm Ho}({_H}\mathcal{M})$ is a tensor triangulated category with unit object $Q(k)$. 
\end{proof}

For any algebra $A$, it is well-known that the category of Gorenstein projective modules is a Frobenius category with projective-injective objects being precisely
the projective modules, and then the stable category
$\underline{\rm GProj}(A)$, obtained by modulo the morphisms factoring through
projective modules, is a triangulated category. If $A$ is of finite Gorenstein global dimension, then $\underline{\rm GProj}(A)$ is triangle-equivalent to the singularity category; cf. \cite{Buc}.

For the completeness, we briefly recall the description of the stable category $\underline{{\rm GProj}}(A)$ from the perspective of homotopy theory. Considering the Gorenstein projective model structure, ${\rm GProj}(A)$ consists of objects that are both cofibrant and fibrant, and the homotopy relation $\sim$ on ${\rm GProj}(A)$ is an equivalence relation. Let $f,g: M \rightarrow N$ be morphisms between Gorenstein projective modules, $\pi: P\rightarrow N$ be an epimorphism from a projective module $P$. Using the standard diagonal map $N\rightarrow N\times N$, we obtain a factorization
\[N\stackrel{i}\rightarrow N\oplus P\oplus P\stackrel{p}\rightarrow N\oplus N\]
where $i(n)=(n,0,0)$ is a trivial cofibration, and $p(n,x,y)=(n+\pi(x), n+\pi(y))$ is a fibration; thus $N\oplus P\oplus P$ is a path object for $N$. Then, $f\sim g$ if and only if there exists a map $(r,s,t):M\rightarrow N\oplus P\oplus P$ such that $p\circ(r,s,t)=(f,g)$; equivalently, if and only if the difference $f-g$ factors through the projective module $P$. Hence $\underline{{\rm GProj}}(A) = {\rm GProj}(A) /\sim $.

\vspace{0.1in}
\noindent  Recall that a tensor triangulated category $(\mathcal{K}, \otimes, \mathds{1})$ consists of a triangulated category $\mathcal{K}$ together with a monoidal structure $\otimes: \mathcal{K}\times \mathcal{K}\rightarrow \mathcal{K}$ and a unit object $\mathds{1}\in \mathcal{K}$; cf. \cite{Bal}. For the stable category $\underline{{\rm GProj}}(H)$, Proposition \ref{prop:f.d.H} shows that the trivial module $k$ lies in $\underline{{\rm GProj}}(H)$ if and only if $H$ is a finite dimensional Hopf algebra. Nevertheless, we have the following.

\begin{cor}\label{cor:tt-cat}
If ${\rm Gpd}_H({_\varepsilon}k)$ is finite, then the stable category $\underline{{\rm GProj}}(H)$ is a tensor triangulated category with unit object $Q(k)$. Moreover, if $H$ is almost cocommutative (in the sense of \cite[Definition 10.1.1]{M}), then $\underline{{\rm GProj}}(H)$ is symmetric as a tensor triangulated category.
\end{cor}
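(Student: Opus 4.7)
The plan is to identify the stable category $\underline{{\rm GProj}}(H)$ with the homotopy category ${\rm Ho}({_H}\mathcal{M})$ constructed in Theorem \ref{thm:m-m-cat}, and then transport the tensor triangulated structure along this identification. First I would invoke Quillen's general principle (cf.\ \cite[Theorem 1.2.10]{H1}) that the homotopy category of a model category is equivalent to the full subcategory of cofibrant--fibrant objects modulo the homotopy relation. In the Gorenstein projective model structure on $_H\mathcal{M}$ every object is fibrant, the cofibrant objects are precisely the Gorenstein projective modules, and the discussion preceding the corollary already exhibits, via the path object $N\oplus P\oplus P$, that the Quillen homotopy relation on ${\rm GProj}(H)$ coincides with the usual projectively-stable equivalence. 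This yields a triangle equivalence $\underline{{\rm GProj}}(H)\simeq {\rm Ho}({_H}\mathcal{M})$.

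Next I would transport the monoidal structure through this equivalence. If $M$ and $N$ are Gorenstein projective, they are already cofibrant, so their cofibrant replacements may be chosen to be themselves; hence the derived tensor product collapses to $M\otimes^{\mathbb{L}} N=M\otimes N$. By Lemma \ref{lem:Gproj}, applied with the $H$-monomorphism $k\to\Lambda$ produced by Theorem \ref{thm:fGpd}, this ordinary tensor product is again Gorenstein projective, so $\otimes$ restricts to a bifunctor on $\underline{{\rm GProj}}(H)$. The associator and unit isomorphisms, together with the unit object $Q(k)$, then descend directly from ${\rm Ho}({_H}\mathcal{M})$, and compatibility with the triangulated structure is automatic since $\otimes$ is already a Quillen bifunctor on $_H\mathcal{M}$ (as shown inside the proof of Theorem \ref{thm:m-m-cat}).

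For the symmetric statement, if $H$ is almost cocommutative with universal element $R\in H\otimes H$ satisfying $\Delta^{\op}(h)=R\Delta(h)R^{-1}$, then the twist $c_{M,N}(m\otimes n)=\tau\bigl(R\cdot(m\otimes n)\bigr)$ is a natural $H$-linear isomorphism $M\otimes N\to N\otimes M$ endowing $_H\mathcal{M}$ with a symmetric monoidal structure. Since Lemmas \ref{lem:fact}(2) and \ref{lem:fact'}(2) imply that morphisms factoring through a projective $H$-module form a tensor ideal, $c$ descends to a natural isomorphism in $\underline{{\rm GProj}}(H)$ and still satisfies the symmetry axioms there, so that the stable category becomes symmetric as a tensor triangulated category with the same unit $Q(k)$.

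The main obstacle I foresee is careful book-keeping: verifying that the triangulated structure on $\underline{{\rm GProj}}(H)$ coming from its Frobenius exact structure genuinely matches the one inherited from ${\rm Ho}({_H}\mathcal{M})$ through the equivalence above, and that the descended associator and symmetry commute appropriately with the translation functor. These checks are formal consequences of $\otimes$ being a Quillen bifunctor on $_H\mathcal{M}$, but they deserve to be tracked explicitly to ensure the tensor triangulated axioms truly hold at the level of the stable category.
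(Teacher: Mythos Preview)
Your argument for the tensor triangulated structure on $\underline{{\rm GProj}}(H)$ matches the paper's proof essentially line for line: both invoke \cite[Theorem 1.2.10]{H1} to identify $\underline{{\rm GProj}}(H)$ with ${\rm Ho}({_H}\mathcal{M})$, both observe that on Gorenstein projective (=cofibrant) modules the derived tensor collapses to the ordinary $\otimes$, and both read off the unit $Q(k)$ from the unit axiom verified in Theorem~\ref{thm:m-m-cat}. The paper, like you, leaves the compatibility of the two triangulated structures (Frobenius vs.\ model-theoretic) implicit.

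The paper gives no proof of the symmetric statement, and your proposed argument for it has a genuine gap. ``Almost cocommutative'' in the sense of \cite[Definition~10.1.1]{M} only supplies an invertible $R\in H\otimes H$ with $\Delta^{\op}(h)=R\Delta(h)R^{-1}$; this makes $c_{M,N}(m\otimes n)=\tau(R\cdot(m\otimes n))$ a natural $H$-linear isomorphism, but it does \emph{not} make $_H\mathcal{M}$ symmetric monoidal. The hexagon coherence requires the quasitriangular axioms on $R$, and the symmetry $c_{N,M}\circ c_{M,N}=\id$ requires the triangular condition $R^{21}R=1$; neither is part of almost cocommutativity. So the sentence ``endowing $_H\mathcal{M}$ with a symmetric monoidal structure'' is unjustified as written, and nothing later in your outline explains why the failure of symmetry at the level of $_H\mathcal{M}$ would be repaired after passing to the stable category. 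If you want to salvage this part, you must either strengthen the hypothesis (e.g.\ triangular, or cocommutative) or give a separate argument showing that $c_{N,M}\circ c_{M,N}-\id$ always factors through a projective, which is not obvious.
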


\begin{proof}
Invoking Theorem \ref{thm:m-m-cat}, the category ${_H}\mathcal{M}$ carries a monoidal model structure. By a fundamental theorem on model categories (see e.g. \cite[Theorem 1.2.10]{H1}), the associated homotopy category ${\rm Ho}({_H}\mathcal{M})$ is triangle-equivalent to ${\rm GProj}(H) /\sim =\underline{{\rm GProj}}(H)$.   

The pushout product axiom implies that, for cofibrant objects (i.e. Gorenstein projective $H$-modules), the smash product is an invariant of the weak quivalence type, 
and therefore descends to a well-defined product on the homotopy category ${\rm Ho}({_H}\mathcal{M})$. In particular, for any Gorenstein projective $H$-modules $M$ and $N$, we have $M\otimes^{\mathbb L} N =M\otimes N$ in ${\rm Ho}({_H}\mathcal{M})$. Consequently, the tensor product in the stable category $\underline{{\rm GProj}}(H)$ is well-defined. Moreover, the unit axiom implies that, for any Gorenstein projective $H$-module $M$, \[M\otimes Q(k) =  M =  Q(k)\otimes M \]
in the stable category $\underline{{\rm GProj}}(H)$. Hence,  $\underline{{\rm GProj}}(H)$ is a tensor triangulated category with unit object $Q(k)$.
\end{proof}

Let $H$ and $L$ be monoidally Morita-Takeuchi equivalent Hopf algebras with bijective antipodes, and let $B$ be an $L$-$H$-biGalois extension of $k$.
By Schauenburg's structure theorem (cf. Theorem \ref{ff-Hopf-Galois-ext-Hopf-bimod-cat-str-thm}), there is an equivalence 
\[ \xymatrix{
(-)^{co H}: {_B\!\M_B^H}\ar@<0.5ex>[r] &{_L\!\M}: - \otimes B \ar@<0.5ex>[l]   } \]
of categories. Theorem \ref{mMTe-thm} shows that the finiteness of the Gorenstein global dimensions of $H$, $L$ and $B^e$ are equivalent. Assuming this finiteness, the Gorenstein projective model structure exists on the category ${_L\!\M}$ of $L$-modules. Moreover, by Proposition \ref{Gpd-L-RB-prop} we obtain the following consequences. For details on Quillen equivalence, we refer to \cite[\S 1.3]{H1}.

\begin{lem}\label{lem:mcs-B-B} 
Assume that  ${\rm Gpd}_H({_\varepsilon}k)$ is finite.
\begin{enumerate}
    \item For any morphism $f: M\rightarrow N$ in ${_B\!\M_B^H}$, we define that it is a cofibration (resp. fibration, weak equivalence) provided that $f^{co H}: M^{co H}\rightarrow N^{co H}$ is a cofibration (resp. fibration, weak equivalence) in the Gorenstein projective model structure on $_L\!\M$. This construction establishes a Gorenstein projective model structure on ${_B\!\M_B^H}$.
    \item The adjoint pair $((-)^{co H}, - \otimes B)$ is a Quillen equivalence between the Gorenstein projective model categories ${_B\!\M_B^H}$ and $_L\!\M$.
\end{enumerate}
\end{lem}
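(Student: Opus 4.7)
The plan is to transport the Gorenstein projective model structure on $_L\!\M$ to ${_B\!\M_B^H}$ along the categorical equivalence of Theorem~\ref{ff-Hopf-Galois-ext-Hopf-bimod-cat-str-thm}, and then observe that the resulting Quillen adjunction is automatically a Quillen equivalence because its underlying adjunction is already an adjoint equivalence. Note at the outset that Theorem~\ref{mMTe-thm} yields ${\rm Ggldim}(L)={\rm Gpd}_H({_\varepsilon}k)<\infty$, so the Gorenstein projective model structure exists on $_L\!\M$ by the construction recalled at the beginning of this section.

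For part~(1), I would argue that since $(-)^{co H}\colon {_B\!\M_B^H}\to {_L\!\M}$ is an equivalence of abelian categories (with quasi-inverse $-\otimes B$), it preserves and reflects all limits, colimits, monomorphisms, epimorphisms and retracts, and by Proposition~\ref{Gpd-L-RB-prop} it also preserves and reflects the classes of Gorenstein projective objects and of objects of finite projective dimension. Defining cofibrations, fibrations and weak equivalences in ${_B\!\M_B^H}$ by pullback along $(-)^{co H}$, each model category axiom (bicompleteness, the two-out-of-three property, closure under retracts, the two lifting properties, and the two factorizations) transfers verbatim from $_L\!\M$, since every ingredient is preserved by the equivalence. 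The intrinsic description of the transferred classes — namely, cofibrations are monomorphisms with Gorenstein projective cokernel in ${_B\!\M_B^H}$, trivial cofibrations have projective cokernel, fibrations are epimorphisms, and trivial fibrations have kernel of finite projective dimension — then follows from the same correspondence, justifying the name \emph{Gorenstein projective model structure} for the transferred structure.

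For part~(2), observe that by construction the left adjoint $(-)^{co H}$ preserves cofibrations and trivial cofibrations, so $((-)^{co H},\, -\otimes B)$ is a Quillen adjunction. Its unit and counit are natural isomorphisms, being the data of a category equivalence, and hence are weak equivalences at every object; consequently the derived unit and derived counit are weak equivalences, and the adjunction is a Quillen equivalence. The principal point that requires care is not a deep obstacle but a matter of bookkeeping: one must invoke Proposition~\ref{Gpd-L-RB-prop} in sufficient generality to transport not only the Gorenstein projective class but also the class of objects of finite projective dimension, since both enter the definition of the model structure on either side.
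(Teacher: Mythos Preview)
Your proposal is correct and follows exactly the approach the paper takes: the paper gives no separate proof of this lemma at all, treating it as an immediate consequence of the category equivalence in Theorem~\ref{ff-Hopf-Galois-ext-Hopf-bimod-cat-str-thm} together with Proposition~\ref{Gpd-L-RB-prop} and Theorem~\ref{mMTe-thm}. Your write-up is in fact more detailed than the paper's, which simply states the lemma after the sentence ``by Proposition~\ref{Gpd-L-RB-prop} we obtain the following consequences.''
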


 Note that ${_B\!\M^H_B}$ is a monoidal category equipped with the tensor product over $B$. We conclude this section by showing that ${_B\!\M_B^H}$ is a monoidal model category.

\begin{thm}\label{thm:mMTe-mmc}
	Let $H$ and $L$ be monoidally Morita-Takeuchi equivalent Hopf algebras with bijective antipodes, and let $B$ be an $L$-$H$-biGalois extension of $k$. Assume that ${\rm Gpd}_H({_\varepsilon}k)$ is finite. Then
	${_B\!\M_B^H}$ is a monoidal model category with respect to the Gorenstein projective model structure. Consequently, the stable category of Gorenstein projective objects in ${_B\!\M_B^H}$ is a tensor triangulated category, with the tensor product taken over $B$.
\end{thm}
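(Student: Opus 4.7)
The plan is to transfer the monoidal model category structure from ${_L\!\M}$ to ${_B\!\M_B^H}$ via Schauenburg's structure theorem, exploiting its compatibility with the respective tensor products. First, by Theorem \ref{mMTe-thm}, the hypothesis that ${\rm Gpd}_H({_\varepsilon}k)$ is finite forces ${\rm Gpd}_L({_\varepsilon}k)$ to be finite as well; since $L$ has bijective antipode, Theorem \ref{thm:m-m-cat} then applies to $L$ and gives that ${_L\!\M}$ with the Gorenstein projective model structure is a monoidal model category under $\otimes_k$ with unit $k$. Corollary \ref{cor:tt-cat} supplies the corresponding tensor triangulated structure on the stable category $\underline{\rm GProj}(L)$.

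The crux is the observation that Schauenburg's equivalence $F := (-)^{co H}: {_B\!\M_B^H} \to {_L\!\M}$ is strong monoidal: there are natural isomorphisms $(M \otimes_B N)^{co H} \cong M^{co H} \otimes_k N^{co H}$ of $L$-modules for $M, N \in {_B\!\M_B^H}$, and the unit $B$ is sent to $B^{co H} = k$. This compatibility is built into the construction of the Hopf algebra structure on $L = (B^e)^{co H}$ recalled before Theorem \ref{mMTe-thm} (cf. \cite{S1, S2}): the comultiplication of $L$ is defined precisely so that the tensor-over-$B$ operation on Hopf bimodules descends to the tensor-over-$k$ of $L$-modules equipped with the diagonal $L$-action.

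Granting this monoidality, the pushout product and unit axioms transfer automatically. Indeed, by Lemma \ref{lem:mcs-B-B} the (trivial) cofibrations, fibrations, and weak equivalences in ${_B\!\M_B^H}$ are by definition those morphisms whose images under $F$ have the corresponding property. Since $F$ is exact and strong monoidal, the pushout product $f \Box_B g$ in ${_B\!\M_B^H}$ corresponds to $F(f) \Box_k F(g)$ in ${_L\!\M}$, so the pushout product axiom for ${_L\!\M}$ from Theorem \ref{thm:m-m-cat} carries over. For the unit axiom, if $q: G \to k$ is a special right Gorenstein projective approximation in ${_L\!\M}$ realizing $Q(k)$, its image under the quasi-inverse $- \otimes B$ furnishes a special right Gorenstein projective approximation $\tilde q: G \otimes B \to B$ in ${_B\!\M_B^H}$, which is the cofibrant replacement $Q(B)$ and, upon passage to the stable category, the tensor unit.

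The principal technical obstacle is the strong monoidality of $F$, specifically the $L$-linearity of the natural identification $(M \otimes_B N)^{co H} \cong M^{co H} \otimes_k N^{co H}$. The underlying vector-space isomorphism is transparent, but one must verify that the diagonal $L$-action on the right, determined by the comultiplication of $L = (B^e)^{co H}$, matches the action $l \vartriangleright (m \otimes_B n) = \sum l^1 m \otimes_B n l^2$ on the left; this is a direct if notationally delicate computation using the explicit formulas for $\Delta_L$ and the translation map $\kappa$ recalled before the theorem. Once this is in place, the conclusion follows by mirroring Theorem \ref{thm:m-m-cat} and Corollary \ref{cor:tt-cat}: via \cite[Theorem 1.2.10]{H1} the homotopy category of ${_B\!\M_B^H}$ under the Gorenstein projective model structure is triangle-equivalent to the stable category of its Gorenstein projective objects, which thereby inherits a tensor triangulated structure with tensor $\otimes_B$ and unit $Q(B)$.
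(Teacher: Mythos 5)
Your proposal is correct and follows essentially the same route as the paper: both arguments rest on the identification $(M\otimes_B N)^{co H}\cong M^{co H}\otimes_k N^{co H}$ (equivalently $M\otimes_B N\cong (M^{co H}\otimes N^{co H})\otimes B$) coming from Schauenburg's equivalence, combined with the model structure of Lemma \ref{lem:mcs-B-B}, to reduce the pushout product and unit axioms to the case of ${_L\!\M}$ settled in Theorem \ref{thm:m-m-cat}, with $Q(k)\otimes B$ as cofibrant replacement of $B$ and tensor unit of the stable category. The only cosmetic difference is that you package the verification as transfer along a strong monoidal exact equivalence, while the paper checks the two axioms directly via Lemmas \ref{lem:Gproj}, \ref{lem:pd} and Proposition \ref{Gpd-L-RB-prop}; the substance is the same.
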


\begin{proof}
 Considering the model structure on ${_B\!\M_B^H}$ established in Lemma \ref{lem:mcs-B-B}, for any $M \in {_B\!\M^H_B}$, it is a cofibrant object (i.e. Gorenstein projective object) if and only if $M^{co H}$ is a cofibrant object in  $_L\!\M$ (i.e. a Gorenstein projective $L$-module); it is a trivial object if and only if $M^{co H}$ has finite projective dimension. For any $M, N \in {_B\!\M^H_B}$, we have
\[M \otimes_B N = (M^{co H} \otimes B)\otimes_B (N^{co H}\otimes B) = (M^{co H} \otimes N^{co H}) \otimes B. \]
If $M$ or $N$ is a Gorenstein projective object in ${_B\!\M^H_B}$, then $M^{co H}$ or $N^{co H}$ is a Gorenstein projective $L$-module. By Lemma \ref{lem:Gproj}, the tensor product $M^{co H} \otimes N^{co H}$ is again Gorenstein projective over $L$; 
Proposition \ref{Gpd-L-RB-prop} therefore implies that $M \otimes_B N \cong (M^{co H} \otimes N^{co H}) \otimes B$ is Gorenstein projective in ${_B\!\M^H_B}$. A parallel argument shows that when  $M$ or $N$ is projective in ${_B\!\M^H_B}$, the tensor product $M \otimes_B N \cong (M^{co H} \otimes_A N^{co H}) \otimes B$ is also projective in ${_B\!\M^H_B}$. In view of \cite[Theorem 7.2]{H1}, these facts together verify the pushout product axiom for the Gorenstein projective model structure on ${_B\!\M^H_B}$.
 
In general, the tensor unit $B$ is not necessarily a Gorenstein projective object in   ${_B\!\M^H_B}$. To verify the unit axiom, consider the short exact sequence of $L$-modules 
\[0\longrightarrow K\longrightarrow Q(k)\stackrel{q}\longrightarrow k\longrightarrow 0,\] 
where $Q(k)$ is a cofibrant replacement (i.e. a special right Gorenstein projective approximation) of $k$ and $K$ has finite projective dimension.
The existence of such a sequence follows from \cite[Theorem 2.10]{Hol} because ${\rm Gpd}_L({_\varepsilon}k)={\rm Gpd}_H({_\varepsilon}k)$ is finite. Then, by applying $-\otimes B$, we get a short exact sequence 
\[0\longrightarrow K\otimes B\longrightarrow Q(k)\otimes B\stackrel{q'}\longrightarrow B\longrightarrow 0\]
in ${_B\!\M_B^H}$, where $Q(k)\otimes B$ is Gorenstein projective by Proposition \ref{Gpd-L-RB-prop}. Now for any  $M\in {_B\!\M_B^H}$, we have 
  \[M\otimes_B (K\otimes B)= (M^{co H}\otimes K)\otimes B, \quad (K\otimes B)\otimes_B M=  (K\otimes M^{co H})\otimes B.\]
In view of Lemma \ref{lem:pd}, both $M^{co H}\otimes K$ and $K\otimes M^{co H}$ have  finite projective dimension as $L$-modules since $K$ does so. Then, $M\otimes_B (K\otimes B)$ and $(K\otimes B)\otimes_B M$ also have finite projective dimension. Therefore, ${\rm id}_M\otimes p'$ and $p'\otimes{\rm id}_M$ are weak equivalences, which establishes the unit axiom. 

Consequently, the homotopy category ${\rm Ho}({_B\!\M_B^H})$ is a tensor triangulated category. Analogous to Corollary \ref{cor:tt-cat}, we can prove that the stable category of Gorenstein projective objects in ${_B\!\M_B^H}$ is a tensor triangulated category, with the tensor product taken over $B$. 
\end{proof}

\vspace{0.1in}

\vspace{0.1in}

{\footnotesize \noindent Wei Ren\\
 School of Mathematical Sciences, Chongqing Normal University, Chongqing 401331, PR China\\
 E-mail: {\tt wren$\symbol{64}$cqnu.edu.cn}}

\vspace{0.1in}

{\footnotesize \noindent Ruipeng Zhu\\
School of Mathematics, Shanghai University of Finance and Economics, Shanghai 200433, PR China \\
E-mail: {\tt zhuruipeng$\symbol{64}$sufe.edu.cn}}
\end{document}